\newcommand\err{\texttt{err}}
\newcommand\eff{\texttt{eff}}
\def\ds{\displaystyle}
\def\G{\Gamma}
\def\O{\Omega}
\def\l{\lambda}
\def\mV{\mathcal{V}}
\renewcommand\sp{\mathop{\mathrm{Sp}}\nolimits}
\newcommand{\set}[1]{\lbrace #1 \rbrace}
\newcommand{\jump}[1]{\llbracket #1 \rrbracket}
\newcommand{\mean}[1]{\{#1\}}
\newcommand{\norm}[1]{\lVert#1\rVert}
\newcommand{\n}{\boldsymbol{n}}
\newcommand\bu{\boldsymbol{u}}
\newcommand\bv{\boldsymbol{v}}
\newcommand\bw{\boldsymbol{w}}
\newcommand\bn{\boldsymbol{n}}
\def\hdel{\widehat{\delta}}
\def\CM{\mathcal{X}}
\def\CN{\mathcal{Y}}
\newcommand\bF{\boldsymbol{f}}
\newcommand\bbK{\mathbb{K}}
\newcommand\bI{\boldsymbol{I}}
\newcommand\bT{\boldsymbol{T}}
\newcommand\0{\mathbf{0}}
\newcommand{\cF}{\mathcal{F}}
\newcommand\cP{\mathcal{P}}
\newcommand\cT{\mathcal{T}}
\def\CT{{\mathcal T}}
\newcommand\bcE{\boldsymbol{\mathcal{E}}}
\newcommand\btau{\boldsymbol{\tau}}
\renewcommand\H{\mathrm{H}}
\renewcommand\L{\mathrm{L}}
\renewcommand\O{\Omega}
\newcommand{\vertiii}[1]{{\left\vert\kern-0.25ex\left\vert\kern-0.25ex\left\vert #1 
		\right\vert\kern-0.25ex\right\vert\kern-0.25ex\right\vert}}
\renewcommand\div{\mathop{\mathrm{div}}\nolimits}
\renewcommand\sp{\mathop{\mathrm{sp}}\nolimits}
\title{Discontinuous Galerkin approximation for a Stokes-Brinkman-type formulation for the eigenvalue problem in porous media\thanks{Submitted to the editors DATE.
\funding{The second author was partially supported by ANID-Chile through FONDECYT project 1231619.  The third author was partially supported by ANID-Chile through FONDECYT Postdoctorado project 3230302.
}}}
\author{Felipe Lepe\thanks{GIMNAP-Departamento de Matem\'atica, Universidad del B\'io - B\'io, Casilla 5-C, Concepci\'on, Chile. \email{flepe@ubiobio.cl}.}
\and Gonzalo Rivera\thanks{Departamento de Ciencias Exactas,
	Universidad de Los Lagos, Casilla 933, Osorno, Chile. \email{gonzalo.rivera@ulagos.cl}.}
\and Jesus Vellojin\thanks{GIMNAP-Departamento de Matem\'atica, Universidad del B\'io - B\'io, Casilla 5-C, Concepci\'on, Chile. \email{jvellojin@ubiobio.cl}.}}
\def\CT{{\mathcal T}}
\begin{document}

\maketitle

\begin{abstract}
We introduce a family of discontinuous Galerkin methods to approximate the eigenvalues and eigenfunctions of a Stokes-Brinkman  type of problem based in the interior penalty strategy. Under the standard assumptions on the meshes and a suitable norm, we prove the stability of the discrete scheme. Due to the non-conforming nature of the method, we use the well-known non-compact operators theory to derive convergence and error estimates for the method. We present an exhaustive computational analysis where we compute the spectrum with different stabilization parameters with the aim of study its influence when the spectrum is approximated.
\end{abstract}

\begin{keywords}
  porous media, fluid equations, eigenvalue problems,  discontinuous Galerkin methods, a priori and a posteriori error analysis, Stokes-Brinkman equations
\end{keywords}

\begin{AMS}
  35Q35,  65N15, 65N25, 65N30, 65N50
\end{AMS}

\section{Introduction}\label{sec:intro}
The approximation of eigenvalues and eigenfunctions of certain systems involving partial differential equations 
 is a subject in constant progress on the numerical analysis community, where different methods and techniques have emerged through the years to approximate accurately these quantities. In \cite{MR2652780}  we found the different approaches to study numerically eigenvalue problems arising from partial differential equations from the finite element method point of view, where prima and mixed formulations are analyzed. However, these techniques are possible to be extended for other  type of methods, particularly discontinuous Galerkin methods (DG) whose interesting features as for example, the  meshes admit hanging-nodes with no restriction and elemental polynomial bases consisting of locally variable polynomial degrees are also admissible, owing to the lack of pointwise continuity requirements across the mesh-skeleton. Since the elements of the mesh does not share  degrees of freedom (DOFs) at their interfaces, the DG method is intrinsically parallelizable, making it very efficient for large-scale computations. This provides a crucial benefit that offsets the greater number of DOFs it uses relative to the finite element method (FEM). Also, an important number of solvers are now available to solve the resulting linear systems when the IPDG is implemented in two and three dimensions (see \cite{MR2323689,MR3246098,MR2522959} for instance).
 
The application of DG for eigenvalue problems, in particular the interior penalization approach (IPDG) was introduced, for the best of the authors's knowledge, for the Laplace eigenvalue problem in \cite{MR2220929},  where the authors proved spectral correctness and error estimates  under suitable norms. These developments have been  also useful to tackle other eigenvalue problems as \cite{MR2324460,MR4542511, MR4673997,MR3962898,MR4623018, MR4077220} where the Maxwell's equations, elasticity, and flow equations, in primal and mixed formulations have been considered. On these references, one of the main contribution besides the mathematical and numerical analysis, is related to the computational experimentation. More precisely, since the use of the IPDG method requieres the introduction of a stabilization parameter intrinsic to penalize the jumps on interments, which is positive and chosen proportionally to the square of the polynomial degree as is proposed in \cite{MR2324460}. If  this parameter is not correctly chosen on the implementation, may introduce undesirable eigenvalues  with no physical meaning. These are the so-called spurious eigenvalues, and the effects of this parameter must be analyzed for IPDG methods. On the other hand, the IPDG has shown in the previously mentioned references a high accuracy on the approximation of eigenvalues for the operators in two and three dimensions and different geometries and boundary conditions. This makes the IPDG method a suitable alternative to solve eigenvalue problems arising from  continuum mechanics.

In the present paper, we continue with our research program related to DG to solve numerically eigenvalue problems now  focusing our attention on the Stokes-Brinkman eigenvalue problem. This problem, already studied in for example \cite{williamson2019posteriori} for the load problem,   has the particularity of interpolate between pure Stokes flow and damped flow in porous media (similar to Brinkman or Darcy–Stokes). Now, the eigenvalue problem version of  this problem is stated as follows: let  $\O\subset\mathbb{R}^d$,  with $d\in\{2,3\}$,  be an open bounded domain with Lipschitz boundary $\partial\O$. Let $\Gamma_1$ and $\Gamma_2$ be disjoint open subset of $\partial\O$ such that $\partial \O=\overline{\Gamma}_1\cup\overline{\Gamma}_2$. Considering a steady-state of the balance laws for linear fluid equations, the problem to be studied is given as follows 
\begin{align}
	\bbK^{-1}\bu - \nu \Delta \bu + \nabla p & = \lambda \bu & \text{in $\Omega$},\label{eq:Brinkman1}\\
	\div\bu &= 0 & \text{in $\Omega$},\label{eq:Brinkman2}\\
	\bu &= \boldsymbol{0}& \text{on $\Gamma_1$}, \label{bc:Sigma} \\
	(\nu\nabla \bu - p\mathbb{I}) \bn   &=\boldsymbol{0} & \text{on $\Gamma_2$} \label{bc:Gamma},
\end{align}
where $\bu$ represents the fluid velocity, $p$ is the pressure, $\mathbb{I}\in\mathbb{R}^{d\times d}$ denotes  the identity matrix, the tensor $\mathbb{K}$ is the parameter associated to the permeability of the domain,  $\nu$ is the viscosity of the fluid, and $\bn$ represents the unit normal. For tensor  $\mathbb{K}$ we assume that is  bounded, symmetric, and positive definite. Let us observe that if  $\mathbb{K}^{-1}\rightarrow \boldsymbol{0}$, system \eqref{eq:Brinkman1}--\eqref{bc:Gamma} is nothing but a Stokes eigensystem,  while if $\nu=0$, we have Darcy's law, which we do not consider for the analysis of an  eigenvalue problem. Regarding the domain $\O$, we split it in two media $\overline{\Omega}:=\overline{\Omega}_S\cup\overline{\Omega}_D$,  where $\Omega_S$ and $\Omega_D$ represent subdomains where there is free flow and porous media, respectively. For the free-flow domain, we take $\mathbb{K}^{-1}=\boldsymbol{0}$, while $\mathbb{K}^{-1}\gg\boldsymbol{0}$ is considered in $\Omega_D$.  This allows to study the eigenmodes on domains where we have membrane-like behavior or internal filters.

System \eqref{eq:Brinkman1}--\eqref{bc:Gamma} has been previously studied in \cite{lepe2025jsc} using inf-sup stable finite elements. In that work, convergence of the solution operators is established, along with both a priori and a posteriori error analyses. On this reference is analyzed how eigenvalues—and consequently, eigenfunctions—are affected by variations in the permeability of the medium in both two and three dimensions. Although the finite element method (FEM) is a well-known and effective technique for approximating the spectrum of operators, it remains somewhat restrictive compared to more general methods—particularly IPDG methods, which offer greater flexibility, as previously mentioned. Now our goal is to extend the results of \cite{lepe2025jsc} to a more general method as the IPDG method.  Unlike \cite{lepe2025jsc}, where the analysis is based on the theory of compact operators, the analysis of nonconforming methods such as IPDG requires the theoretical framework developed in \cite{MR483400, MR483401} in order to establish convergence and derive error estimates, as shown, for instance, in \cite{MR2220929, MR4623018}. Moreover, since we are dealing with varying permeabilities, not only can geometric singularities affect the regularity of certain eigenfunctions, but the heterogeneity of the permeability coefficient itself may also impact them in different ways. This motivates the development of an a posteriori error estimator that is reliable, efficient, and fully computable within the IPDG framework. Also, these analyzes need to be supported with numerical tests. In this sense, as we discussed above in the influence of the stabilization parameter, we theoretically prove that the solution operator associated to the velocity is well defined when the stabilization is sufficiently large compared with quantities depending on physical parameters such as viscosity and permeability (cf. Lemma \ref{lmm:disc_ellipticity}) and this must be confirmed by the theory, motivating the design of experiments where the influence of the stabilization plays an important role on the convergence and the appearance of spurious eigenvalues, as is studied in  \cite{ MR4623018, MR3962898, MR4077220, MR4673997} for  IPDG methods in different contexts.

%

 \subsection{Outline of the paper} The notations for Lebesgue spaces, norms, and inner products are the standard along our manuscript. In section \ref{sec:model_problem} we summarize some details about the model problem, solutions operators, well posedness of the load problem and regularity results. Immediately in section \ref{sec:DG} we introduce the definitions of the elements of the mesh, norms, seminorms, polynomial spaces in order to define  the IPDG methods. Here we introduce the discrete bilinear forms and hence, the IPDG discretizations of the model problem. The discrete solutions operators are introduced with the corresponding discrete load problems which we prove are well posed. With this discrete framework at hand, in section \ref{sec:conv_error} we analyze convergence and error estimates for the discontinuous numerical schemes. Section  \ref{sec:apost} contains the analysis for an a posteriori error estimator of the residual type and, finally in section \ref{sec:numerics}, we report a complete experimental analysis for  in order to assess the performance of the IPDG methods when the spectrum of the model problem is approximated.
 


\section{Functional framework and variational formulation}
\label{sec:model_problem}

Let us establish the functional framework in which we will operate. The space where we seek the  velocity is
$$
	\boldsymbol{\H}_{\Gamma_1}(\Omega):= \{ \bv \in \boldsymbol{\H}^1(\Omega): \bv = \boldsymbol{0} \text{ on } \Gamma_1\},
$$
whereas $\mathrm{L}^2(\Omega)$ is the space for the pressure. If $\Gamma_2 = \emptyset$, then the pressure is defined up to a constant. Hence, we take $\boldsymbol{\H}_0^1(\Omega)$ for the velocity space and $\L_0^2(\Omega)$ for the pressure.

Throughout this work, we assume that the permeability tensor is positive definite for all $\bv\in\boldsymbol{\H}_{\Gamma_1}(\Omega)$. More precisely, there exist positive constants $\mathbb{K} _*,\mathbb{K}^*>0$ such that
\begin{equation*}
	\label{eq:kappa-bounds}
	0<\mathbb{K}_*\Vert\bv\Vert_{0,\O}^2 \leq (\mathbb{K}^{-1}\bv,\bv)_{0,\Omega}\leq \mathbb{K}^*\Vert\bv\Vert_{0,\O}^2.
\end{equation*}

A variational formulation for system \eqref{eq:Brinkman1}--\eqref{bc:Gamma} is the following: find $\lambda\in\mathbb{R}^+$, the velocity $\boldsymbol{0}\neq \bu\in\boldsymbol{\H}_{\Gamma_1}(\Omega)$, and the pressure $0\neq p\in\mathrm{L}^2(\O)$ such that
\begin{align*}
	\int_{\Omega} \mathbb{K}^{-1} \bu \cdot \bv + \nu \int_{\Omega} \nabla \bu :\nabla \bv - \int_{\Omega} p \div \bv & = \lambda \int_{\Omega} \bu \cdot \bv &\forall\bv\in\boldsymbol{\H}_{\Gamma_1}(\Omega), \\
	- \int_{\Omega} \div \bu\,  q &= 0  &\forall q\in \mathrm{L}^2(\Omega).
\end{align*}

Defining   the continuous bilinear forms $a:\boldsymbol{\H}_{\Gamma_1}(\Omega)\times\boldsymbol{\H}_{\Gamma_1}(\Omega)\rightarrow\mathbb{R}$ and $b:\boldsymbol{\H}_{\Gamma_1}(\Omega)\times \mathrm{L}^2(\Omega)\rightarrow \mathbb{R}$ as follows
$$
\begin{aligned}
	a(\bu,\bv)&:=\int_{\Omega} \mathbb{K}^{-1} \bu \cdot \bv + \nu \int_{\Omega} \nabla \bu :\nabla \bv,\quad\text{and}\quad
	b(\bv,q)&:= - \int_{\Omega} q \div\bv,
\end{aligned}
$$
the weak formulation  is rewritten in the following abstract form:
\begin{problem}\label{prob:continuous}
	Find $\lambda\in\mathbb{R}^+$ and $(\boldsymbol{0},0)\neq(\bu,p)\in\boldsymbol{\H}_{\Gamma_1}(\Omega)\times \mathrm{L}^2(\Omega)$ such that
\begin{equation*}
	\label{eq:porous_system_continuous}
	\left\{
	\begin{aligned}
		a(\bu,\bv) + b(\bv,p) &= \lambda(\bu,\bv)_{0,\Omega},&\forall \bv\in\boldsymbol{\H}_{\Gamma_1}(\Omega),\\
		b(\bu,q)&=0,&\forall q\in \L^2(\Omega).
	\end{aligned}
	\right.
\end{equation*}
where $(\cdot,\cdot)_{0,\Omega}$ denotes the usual $\mathrm{L}^2$ inner product.
\end{problem}


We denote by  $\boldsymbol{\mathcal{K}}$ the kernel of $b(\cdot,\cdot)$ which si defined by 
$$\boldsymbol{\mathcal{K}}:= \{ \bv \in \boldsymbol{\H}_{\Gamma_1}(\Omega): b(\bv, q)=0 \quad \forall q \in \mathrm{L}^2(\Omega) \} = \{ \bv \in\boldsymbol{\H}_{\Gamma_1}(\Omega): \div \bv = 0 \}. $$

With this space at  hand, it is direct to prove that  bilinear form $a(\cdot,\cdot)$ satisfies
\begin{align*}
	a(\bu, \bv)  &\le \max\{{\mathbb{K}^*},\nu\}\Vert \bu\Vert_{1,\Omega}\Vert\bv\Vert_{1,\O}\quad \forall \bu,  \bv \in\boldsymbol{\H}_{\Gamma_1}(\Omega), \\
	a(\bv, \bv) &\ge \min\{{\mathbb{K}_*},\nu\}\Vert \bv\Vert_{1,\Omega}^2 \quad \forall \bv \in \boldsymbol{\mathcal{K}}.
\end{align*}

On the other hand,  we have that there exists $\beta>0$ such that the following inf-sup condition holds
\begin{equation}
\label{eq:cont_inf_sup}
\sup_{\boldsymbol{0} \neq\bv \in \boldsymbol{\H}_{\Gamma_1}(\Omega)}\frac{b(\bv, q)}{\Vert\bv\Vert_{1,\Omega}}  \ge \beta \norm{q}_{0,\Omega}, \qquad \forall q \in \mathrm{L}^2(\Omega).
\end{equation}
Let us  define the following continuous bilinear form
$$
A((\bu,p),(\bv,q)):= a(\bu,\bv) + b(\bv,p) + b(\bu,q),
$$
which allows to rewrite Problem \ref{prob:continuous}  in the following manner:
\begin{problem}\label{prob:continuous-A}
	Find $\lambda\in\mathbb{R}^+$ and $(\boldsymbol{0},0)\neq(\bu,p)\in\boldsymbol{\H}_{\Gamma_1}(\Omega)\times \mathrm{L}^2(\Omega)$ such that
	\begin{equation*}
		\label{eq:porous_system_continuous-A}
		A((\bu,p),(\bv,q))= \lambda(\bu,\bv)_{0,\Omega},\qquad\forall (\bv,q)\in\boldsymbol{\H}_{\Gamma_1}(\Omega)\times \L^2(\Omega).
	\end{equation*}
	\end{problem}


	For the analysis of  Problem \ref{prob:continuous} (namely Problem \ref{prob:continuous-A}) it is necessary to introduce the corresponding solution operators associated to the velocity and the pressure as in \cite{MR2652780}. With this idea in mind, let us denote by  $\bT$ the operator associated to the velocity and    $\mathcal{S}$ the one associated to the pressure, which are defined by 
\begin{align*}
\bT&:\boldsymbol{\L}^2(\O)\rightarrow\boldsymbol{\H}_{\Gamma_1}(\Omega),\,\,\,\bF\mapsto\bT\bF:=\widetilde{\bu}, \\
\mathcal{S}&:\boldsymbol{\L}^2(\O)\rightarrow \L^2(\O),\,\,\,\bF\mapsto\mathcal{S}\bF:=\widetilde{p}, 
\end{align*}
where the pair  $(\widetilde{\bu},\widetilde{p})\in\boldsymbol{\H}_{\Gamma_1}(\Omega)\times\L^2(\O)$ solves the following source problem 
\begin{equation}
	\label{eq:porous_system_continuous_source}
	\left\{
	\begin{aligned}
		a(\widetilde{\bu},\bv) + b(\bv,\widetilde{p}) &= (\boldsymbol{f},\bv)_{0,\Omega},&\forall \bv\in\boldsymbol{\H}_{\Gamma_1}(\Omega),\\
		b(\widetilde{\bu},q)&=0,&\forall q\in \L^2(\Omega),
	\end{aligned}
	\right.
\end{equation}
which is well posed due to \eqref{eq:cont_inf_sup},  the coercivity of $a(\cdot,\cdot)$ on $\boldsymbol{\mathcal{K}}$, and the Babu\v ska-Brezzi theory. This implies that $\bT$ and $\mathcal{S}$ are well defined. 
Moreover, let $\mu$ be a real number such that  $\mu\neq 0$. Notice that $(\mu,\boldsymbol{u})\in \mathbb{R}^+\times\boldsymbol{\H}_{\Gamma_1}(\Omega)$ is an eigenpair of $\bT$ if and only if  there exists $p\in \L^2(\Omega)$ such that,  $(\l,(\boldsymbol{u},p))$ solves Problem \ref{prob:continuous-A}  with $\mu:=1/\l$.

Let us remark that trivially, we can consider the following source problem: find $(\widetilde{\bu},\widetilde{p})\in\boldsymbol{\H}_{\Gamma_1}(\Omega)\times\L^2(\O)$ such that 
\begin{equation} 
	\label{eq:source}
	A((\widetilde{\bu},\widetilde{p}),(\bv,q))=(\bF,\bv)_{0,\O},\qquad \forall(\bv,q)\in \boldsymbol{\H}_{\Gamma_1}(\Omega)\times \L^2(\Omega).
\end{equation}

Since  $(\mathbb{K}^{-1}\bu,\bv)_{0,\O}$ is well defined, from the well known Stokes regularity results (see \cite{MR975121,MR1600081} for instance), we have  the following additional regularity result for the solution of the source problem \eqref{eq:source}.
\begin{theorem}\label{th:regularidadfuente}
There exists $s>0$  that for all $\boldsymbol{f} \in \boldsymbol{\L}^2(\O)$, the solution $(\widetilde{\bu},\widetilde{p})\in\boldsymbol{\H}_{\Gamma_1}(\Omega)\times \mathrm{L}^2(\Omega)$ of problem \eqref{eq:source}, satisfies for the velocity $\widetilde{\bu}\in  \boldsymbol{\H}^{1+s}(\Omega)$, for the pressure $\widetilde{p}\in \H^s(\Omega)$, and
 \begin{equation*}
\|\widetilde{\bu}\|_{1+s,\O}+\|\widetilde{p}\|_{s,\O}\leq C \|\boldsymbol{f}\|_{0,\O},
 \end{equation*}
 where $C>0$ is a constant  depending on  the physical parameters.
\end{theorem}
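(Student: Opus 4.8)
The plan is to reduce the regularity of the mixed system \eqref{eq:source} to the classical Stokes regularity theory by treating the zeroth-order Brinkman term $\mathbb{K}^{-1}\widetilde{\bu}$ as a modified right-hand side. First I would rewrite the velocity equation \eqref{eq:porous_system_continuous_source} in the form $-\nu\Delta\widetilde{\bu}+\nabla\widetilde{p}=\boldsymbol{f}-\mathbb{K}^{-1}\widetilde{\bu}$ together with $\div\widetilde{\bu}=0$, recognizing this as a Stokes problem with source $\boldsymbol{g}:=\boldsymbol{f}-\mathbb{K}^{-1}\widetilde{\bu}$. By the Babu\v{s}ka-Brezzi theory already invoked in the excerpt, the pair $(\widetilde{\bu},\widetilde{p})$ exists, is unique, and satisfies the a priori bound $\|\widetilde{\bu}\|_{1,\O}+\|\widetilde{p}\|_{0,\O}\le C\|\boldsymbol{f}\|_{0,\O}$. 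Since $\mathbb{K}^{-1}$ is bounded (by the assumed bounds $0<\mathbb{K}_*\le\mathbb{K}^{-1}\le\mathbb{K}^*$), we immediately get $\boldsymbol{g}\in\boldsymbol{\L}^2(\O)$ with $\|\boldsymbol{g}\|_{0,\O}\le\|\boldsymbol{f}\|_{0,\O}+\mathbb{K}^*\|\widetilde{\bu}\|_{0,\O}\le C\|\boldsymbol{f}\|_{0,\O}$.

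Next I would invoke the Stokes regularity result (the references \cite{MR975121,MR1600081}) applied to the Stokes system with right-hand side $\boldsymbol{g}\in\boldsymbol{\L}^2(\O)$. On a Lipschitz domain with the mixed boundary conditions \eqref{bc:Sigma}--\eqref{bc:Gamma}, this theory guarantees the existence of a shift exponent $s>0$ (determined by the geometry of $\O$ and the Dirichlet/Neumann interface between $\Gamma_1$ and $\Gamma_2$) such that $\widetilde{\bu}\in\boldsymbol{\H}^{1+s}(\O)$ and $\widetilde{p}\in\H^s(\O)$, with the shift estimate
\begin{equation*}
\|\widetilde{\bu}\|_{1+s,\O}+\|\widetilde{p}\|_{s,\O}\le C\|\boldsymbol{g}\|_{0,\O}.
\end{equation*}
Chaining this with the bound on $\|\boldsymbol{g}\|_{0,\O}$ from the previous step yields the claimed estimate $\|\widetilde{\bu}\|_{1+s,\O}+\|\widetilde{p}\|_{s,\O}\le C\|\boldsymbol{f}\|_{0,\O}$, with $C$ absorbing $\nu$, $\mathbb{K}^*$, and the Stokes shift constant, hence depending only on the physical parameters and the domain.

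The main subtlety is the regularity exponent $s$ itself. On a smooth or convex domain one expects full regularity ($s=1$), but on a general Lipschitz domain with reentrant corners or with the change of boundary condition across $\overline{\Gamma}_1\cap\overline{\Gamma}_2$, the value of $s$ is strictly limited by the worst singularity, so one can only assert existence of \emph{some} $s>0$ rather than a specific value — which is exactly what the statement claims. A second point to verify is that the heterogeneous, merely bounded coefficient $\mathbb{K}^{-1}$ does not further degrade the shift: because it enters only as a lower-order term, the bootstrap closes at the $\boldsymbol{\L}^2$ level and the Stokes shift is preserved, so the regularity is dictated by the Stokes operator and the geometry alone. I would therefore emphasize that no additional regularity of $\mathbb{K}^{-1}$ beyond boundedness is needed for this $\boldsymbol{\L}^2$-to-$\boldsymbol{\H}^{1+s}$ shift, deferring any coefficient-dependent loss of regularity to the discussion of eigenfunction regularity elsewhere in the paper.
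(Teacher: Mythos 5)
Your proposal is correct and follows essentially the same route as the paper, which proves the theorem simply by noting that the term $(\mathbb{K}^{-1}\widetilde{\bu},\bv)_{0,\O}$ is a well-defined lower-order contribution and then invoking the classical Stokes regularity results of \cite{MR975121,MR1600081}. You have merely made explicit the bootstrap the paper leaves implicit: moving $\mathbb{K}^{-1}\widetilde{\bu}$ to the right-hand side, bounding it in $\boldsymbol{\L}^2(\O)$ via the Babu\v{s}ka--Brezzi a priori estimate, and chaining with the Stokes shift estimate.
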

\begin{remark}
It is worth mentioning that the estimate provided in Theorem \ref{th:regularidadfuente} holds for the solutions of the load problem. Furthermore, for the eigenfunctions, there exists $r>0$ and  a constant 
$C>0$, which depends on the physical parameters and the eigenvalue $\lambda$, such that
\begin{equation}\label{eq_regularityofev}
\|\bu\|_{1+r,\O}+\|p\|_{r,\O}\leq C \|\bu\|_{0,\O}.
 \end{equation}
\end{remark}

Finally, since the embedding $\boldsymbol{\H}^{1+s}(\O)\hookrightarrow\boldsymbol{ \L}^2(\O)$ holds, we
conclude that $\bT$ is compact and the following  spectral characterization of $\bT$ holds.
\begin{lemma}(Spectral Characterization of $\bT$).
The spectrum of $\bT$ is such that $\sp(\bT)=\{0\}\cup\{\mu_{k}\}_{k\in{N}}$ where $\{\mu_{k}\}_{k\in\mathbf{N}}$ is a sequence of real  eigenvalues that converge to zero, according to their respective multiplicities. 
\end{lemma}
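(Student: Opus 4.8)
The plan is to regard $\bT$ as an operator acting on the Hilbert space $\boldsymbol{\L}^2(\O)$, by composing it with the natural embedding $\boldsymbol{\H}_{\Gamma_1}(\Omega)\hookrightarrow\boldsymbol{\L}^2(\O)$, and then to establish three properties: $\bT$ is compact, self-adjoint, and positive with respect to the $\boldsymbol{\L}^2$ inner product. Once these are in place, the asserted spectral structure follows immediately from the classical spectral theorem for compact self-adjoint operators on a separable Hilbert space.

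First I would settle compactness. By the Babu\v ska--Brezzi theory the source problem \eqref{eq:porous_system_continuous_source} is well posed, so $\bT:\boldsymbol{\L}^2(\O)\to\boldsymbol{\H}_{\Gamma_1}(\Omega)$ is bounded; moreover, by the regularity result of Theorem \ref{th:regularidadfuente}, the range of $\bT$ lies in $\boldsymbol{\H}^{1+s}(\O)$ with the bound $\|\bT\bF\|_{1+s,\O}\le C\|\bF\|_{0,\O}$. Writing $\bT$ as the composition of this bounded map into $\boldsymbol{\H}^{1+s}(\O)$ with the compact Sobolev (Rellich) embedding $\boldsymbol{\H}^{1+s}(\O)\hookrightarrow\boldsymbol{\L}^2(\O)$, which is compact because $s>0$, yields the compactness of $\bT$ on $\boldsymbol{\L}^2(\O)$.

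Next I would prove self-adjointness and positivity, which is where the saddle-point structure must be handled with care. Given $\bF,\bg\in\boldsymbol{\L}^2(\O)$, write $\bT\bF=\wbu$ and $\bT\bg=\bw$ for the corresponding velocity components, with associated pressures. Testing the first equation of \eqref{eq:porous_system_continuous_source} for the datum $\bF$ with $\bv=\bw$, and observing that the mixed term $b(\bw,\cdot)$ vanishes because $\bT\bg=\bw$ is divergence-free (it belongs to $\bcK$), I obtain $(\bF,\bT\bg)_{0,\O}=a(\wbu,\bw)$. Repeating the argument with the roles of $\bF$ and $\bg$ exchanged gives $(\bT\bF,\bg)_{0,\O}=a(\bw,\wbu)$. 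Since $a(\cdot,\cdot)$ is symmetric---$\mathbb{K}$ being symmetric and the viscous term being manifestly so---these two identities coincide, proving $\bT$ is self-adjoint. Taking $\bg=\bF$ in the same computation yields $(\bT\bF,\bF)_{0,\O}=a(\wbu,\wbu)\ge 0$, with strict positivity whenever $\bT\bF\neq\boldsymbol{0}$ by the coercivity of $a(\cdot,\cdot)$ on $\bcK$; hence $\bT$ is positive.

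Finally, the spectral theorem for compact self-adjoint operators gives that $\sp(\bT)=\{0\}\cup\{\mu_k\}_{k\in\mathbf{N}}$, where the $\mu_k$ are real eigenvalues of finite multiplicity whose only possible accumulation point is $0$; positivity forces $\mu_k>0$. The only delicate point in the whole argument is the self-adjointness step: one must verify that the pressure contributions drop out, which relies precisely on the incompressibility constraint $b(\bT\bF,\cdot)=0$ that places the velocity solution in the kernel $\bcK$. Everything else is a routine application of standard functional-analytic tools.
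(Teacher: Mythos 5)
Your proof is correct and follows essentially the same route as the paper: compactness of $\bT$ viewed on $\boldsymbol{\L}^2(\O)$, obtained from the regularity estimate of Theorem \ref{th:regularidadfuente} composed with the compact embedding $\boldsymbol{\H}^{1+s}(\O)\hookrightarrow\boldsymbol{\L}^2(\O)$, followed by the classical spectral theory of compact operators. The only difference is one of detail rather than of method: the paper dispatches this in a single sentence and leaves the self-adjointness and positivity of $\bT$ (hence the realness of the $\mu_k$) implicit, whereas you verify them explicitly by testing the source problem with the divergence-free velocity solutions and invoking the symmetry and $\boldsymbol{\mathcal{K}}$-coercivity of $a(\cdot,\cdot)$, which is a worthwhile point to make explicit.
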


We end this section with a result that establishes a general inf-sup condition for the bilinear form $A(\cdot,\cdot)$, which is essential for proving the reliability of the a posteriori estimator and its proof is available in \cite[Lemma 4.3]{Paul2005347}.
\begin{lemma}\label{lemma:elliptic}
	For all $(\boldsymbol{0},0)\neq (\bu,p)\in\boldsymbol{\H}_{\Gamma_1}(\Omega)\times \L^2(\Omega)$, there exists $(\bv,q)\in\boldsymbol{\H}_{\Gamma_1}(\Omega)\times \L^2(\Omega)$ with $\vertiii{(\bv,q)}\leq C_1\vertiii{(\bu,p)}$ such that
	$$
	A((\bu,p),(\bv,q))\geq C_2 \vertiii{(\bu,p)}^2,
	$$
	where $\vertiii{(\bv,p)}^2=\|\bv\|_{1,\O}^2+\|q\|_{0,\O}^2$ and $C_1$ and $C_2$ are constants depending on the physical parameters.
\end{lemma}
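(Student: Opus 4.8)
The plan is to establish the global inf-sup condition for $A(\cdot,\cdot)$ by explicitly constructing the test pair $(\bv,q)$ out of $(\bu,p)$, combining the coercivity of $a(\cdot,\cdot)$ with the inf-sup condition \eqref{eq:cont_inf_sup} for $b(\cdot,\cdot)$. First I would record that, thanks to the uniform positive-definiteness of $\mathbb{K}^{-1}$ (equivalently, to the viscous term together with the Poincar\'e inequality on $\boldsymbol{\H}_{\Gamma_1}(\Omega)$), the form $a(\cdot,\cdot)$ is coercive on the whole space, $a(\bv,\bv)\ge\alpha\|\bv\|_{1,\Omega}^2$ for all $\bv$, with $\alpha:=\min\{\mathbb{K}_*,\nu\}$. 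Testing $A$ with the natural pair $(\bu,-p)$ eliminates the off-diagonal coupling, since the contributions $b(\bu,p)$ and $b(\bu,-p)$ cancel, leaving $A((\bu,p),(\bu,-p))=a(\bu,\bu)\ge\alpha\|\bu\|_{1,\Omega}^2$. This controls the velocity but yields no information on the pressure, so a second test direction is needed.

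The second ingredient is a velocity field that ``sees'' the pressure. By \eqref{eq:cont_inf_sup}, for the given $p$ there exists $\bw\in\boldsymbol{\H}_{\Gamma_1}(\Omega)$, which I normalize so that $\|\bw\|_{1,\Omega}=\|p\|_{0,\Omega}$, satisfying $b(\bw,p)\ge\beta\|p\|_{0,\Omega}^2$. I would then test $A$ with the corrected pair $(\bv,q):=(\bu+\delta\bw,-p)$ for a parameter $\delta>0$ to be fixed. Expanding and using again that the $b(\bu,p)$ terms cancel, one finds $A((\bu,p),(\bv,q))=a(\bu,\bu)+\delta\,a(\bu,\bw)+\delta\,b(\bw,p)$. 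Bounding the cross term by continuity of $a$, namely $|a(\bu,\bw)|\le\max\{\mathbb{K}^*,\nu\}\|\bu\|_{1,\Omega}\|p\|_{0,\Omega}$, and absorbing it via Young's inequality into the coercivity and inf-sup contributions, yields a lower bound of the form $\tfrac{\alpha}{2}\|\bu\|_{1,\Omega}^2+(\delta\beta-\tfrac{\delta^2}{2\alpha}\max\{\mathbb{K}^*,\nu\}^2)\|p\|_{0,\Omega}^2$.

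It then remains to choose $\delta$ small enough (quantitatively, $\delta\le\alpha\beta/\max\{\mathbb{K}^*,\nu\}^2$) so that the pressure coefficient stays bounded below by $\tfrac{\delta\beta}{2}$; this delivers $A((\bu,p),(\bv,q))\ge C_2\vertiii{(\bu,p)}^2$ with $C_2=\min\{\alpha/2,\delta\beta/2\}$. Finally, the stability bound $\vertiii{(\bv,q)}\le C_1\vertiii{(\bu,p)}$ follows from the triangle inequality, since $\|\bv\|_{1,\Omega}\le\|\bu\|_{1,\Omega}+\delta\|p\|_{0,\Omega}$ and $\|q\|_{0,\Omega}=\|p\|_{0,\Omega}$, giving $C_1$ depending only on $\delta$. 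I expect the only delicate points to be the bookkeeping of signs in the cancellation of the coupling terms and the quantitative calibration of $\delta$ balancing the coercivity constant $\alpha$, the continuity constant $\max\{\mathbb{K}^*,\nu\}$, and the inf-sup constant $\beta$; the coercivity of $a$ on the \emph{full} space (rather than only on $\boldsymbol{\mathcal{K}}$) is what makes the simple two-term test construction work, and I would make this reduction explicit at the outset.
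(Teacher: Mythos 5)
Your construction is correct: the expansion $A((\bu,p),(\bu+\delta\bw,-p))=a(\bu,\bu)+\delta a(\bu,\bw)+\delta b(\bw,p)$ is right, the Young-inequality absorption and the calibration $\delta\leq\alpha\beta/\max\{\mathbb{K}^*,\nu\}^2$ work, and the stability bound $\vertiii{(\bv,q)}\leq C_1\vertiii{(\bu,p)}$ follows as you say. Note, however, that the paper does not prove Lemma \ref{lemma:elliptic} at all; it simply cites \cite[Lemma 4.3]{Paul2005347}, so your argument is a self-contained reconstruction of what that reference establishes, namely the classical Babu\v{s}ka--Brezzi route from (i) coercivity of $a(\cdot,\cdot)$ and (ii) the inf-sup condition \eqref{eq:cont_inf_sup} to the ``big'' inf-sup for the composite form $A(\cdot,\cdot)$. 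Your observation that $a(\cdot,\cdot)$ is coercive on \emph{all} of $\boldsymbol{\H}_{\Gamma_1}(\Omega)$ (not merely on $\boldsymbol{\mathcal{K}}$, which is all the paper records) is the key simplification and is justified here either by the standing assumption $\mathbb{K}_*>0$ or by Poincar\'e's inequality when $\Gamma_1$ has positive measure; making that explicit is a genuine improvement in transparency, and your proof also yields explicit constants $C_1,C_2$ in terms of $\alpha$, $\beta$, $\max\{\mathbb{K}^*,\nu\}$. One small technical point to tidy up: the supremum in \eqref{eq:cont_inf_sup} need not be attained, so you cannot literally pick $\bw$ with $\|\bw\|_{1,\Omega}=\|p\|_{0,\Omega}$ and $b(\bw,p)\geq\beta\|p\|_{0,\Omega}^2$; either accept a near-optimizer (replacing $\beta$ by $\beta/2$, which only changes constants) or invoke the standard right inverse of the divergence, i.e.\ $\bw\in\boldsymbol{\H}_{\Gamma_1}(\Omega)$ with $\div\bw=-p$ and $\|\bw\|_{1,\Omega}\leq C\|p\|_{0,\Omega}$. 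This is a cosmetic fix, not a gap.
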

%

\section{The DG method}
\label{sec:DG}
Let us introduce the IPDG methods. With this purpose, we need to 
state the framework in which our analysis will be performed, implying the introduction of definitions and 
notations that we will use through the work. We begin with the elements of the mesh.
Let $\mathcal{T}_h$ be a shape regular family of meshes which subdivide the domain $\bar \Omega$ into  
triangles/tetrahedra that we denote by $K$. Let us denote by $h_K$
the diameter of any element $K\in\mathcal{T}_h$ and let $h$ be the maximum of the diameters of all the
elements of the mesh, i.e. $h:= \max_{K\in \cT_h} \{h_K\}$.

Let $F$ be a closed set. We say that  $F\subset \overline{\Omega}$
is an interior edge/face if $F$ has a positive $(d-1)$-dimensional 
measure and if there are distinct elements $K$ and $K'$
such that $F =\bar K\cap \bar K'$. A closed 
subset $F\subset \overline{\Omega}$ is a boundary edge/face if
there exists $K\in \cT_h$ such that $F$ is an edge/face
of $K$ and $F =  \bar K\cap \G$. 
Let $\cF_h^0$ and $\cF_h^\partial$ be  the sets  of interior edges/faces
and  boundary edges/face, respectively.
We assume that the boundary mesh $\cF_h^\partial$ is
compatible with the partition $\G = \G_1 \cup \G_2$, namely,
\[
\bigcup_{F\in \cF_h^1} F = \G_1 \qquad \text{and} \qquad \bigcup_{F\in \cF_h^2} F = \G_2,
\]
where $\cF_h^1:= \set{F\in \cF_h^\partial; \quad F\subset \G_1}$ and 
$\cF_h^2:= \set{F\in \cF_h^\partial: \quad F\subset \G_2}$.
Also we denote   $\cF_h := \cF_h^0\cup \cF_h^\partial$ and $\cF^*_h:= \cF_h^{0} \cup \cF_h^{1}$.
Also, for any element $K\in \cT_h$, we introduce the set
$\cF(K):= \set{F\in \cF_h:\quad F\subset \partial K}$  of edges/faces composing the boundary of $K$.

For any $t\geq 0$, we define the following broken Sobolev space 
\[
 \boldsymbol{\H}^t(\cT_h):=
 \set{\bv \in \boldsymbol{\L}^2(\O): \quad \bv|_K\in \boldsymbol{\H}^t(K)\quad \forall K\in \cT_h}.
\]
%
Also, the space of the skeletons of the triangulations
$\cT_h$ is defined  by $ \L^2(\cF_h):= \prod_{F\in \mathcal{F}_h} \L^2(F).$

In the forthcoming analysis, $h_\cF\in \L^2(\cF_h)$
will represent the piecewise constant function 
defined by $h_\cF|_F := h_F$ for all $F \in \cF_h$,
where $h_F$ denotes the diameter of edge/face $F$.

Let  $\cP_m(\cT_h)$ be  the space of piecewise polynomials respect with to
 $\cT_h$  of degree at most $m\geq 0$;
namely,  
\[
 \cP_m(\cT_h) :=\left\{ v\in \L^2(\O):\, \bv|_K \in \cP_m(K), \forall K\in \cT_h \right\}. 
\]
Let $k\geq 1$.  To approximate the velocity we define the following space 
\begin{equation*}
\boldsymbol{\mV}_h:=\{\bv_h\in \boldsymbol{\L}^2(\O)\,:\quad \bv_h|_K\in \boldsymbol{\cP}_k(K),\,\,\,\forall K\in\CT_h\},
\end{equation*}
where $\boldsymbol{\cP}_k(K):=\cP_k(K)^d$, whereas for the approximation of the pressure, we consider the following space
\begin{equation*}
\mathcal{Q}_h:=\{q_h\in \L^2(\O)\,:\quad q_h|_K\in \cP_{k-1}(K),\,\,\,\forall K\in\CT_h\}.
\end{equation*}

Given a scalar field $q$, we define the average $\mean{q}\in \L^2(\cF_h)$ and the jump $\jump{q}\in \L^2(\cF_h)$ by  
$$\mean{q} := (q_K + q_{K'})/2,\qquad \jump{q} := q_K\n_K + q_{K'}\n_{K'},$$
respectively,  where $\n_K$ is the outward unit normal vector to $\partial K$ and $q_K$ represents the restriction $q|_K$. 
Similarly, for a vector field $\bv$, the average $\mean{\bv}\in \boldsymbol{\L}^2(\cF_h)$  and scalar jump $\jump{\underline{\bv}}\in \L^2(\cF_h)$ are given by 
$$\mean{\bv} := (\bv_K + \bv_{K'})/2\qquad \jump{\underline{\bv}} := \bv_K \cdot\n_K + \bv_{K'}\cdot\n_{K'},$$
respectively, while the tensor (or total) jump  $\jump{\bv}\in [\L^2(\cF_h)]^{d\times d}$ is defined by 
$$\jump{\bv} := \bv_K \otimes\n_K + \bv_{K'}\otimes\n_{K'}.$$ 
Finally, if $\btau$ is a tensor field, we define the corresponding average and jump as $$\mean{\btau} := (\btau_K + \btau_{K'})/2 \in [\L^2(\cF_h)]^{d\times d},\qquad \jump{\btau\bn} := \btau_K\n_K + \btau_{K'}\n_{K'}\in \boldsymbol{\L}^2(\cF_h),$$ respectively. If  $K\in \CT_h$ is such that a facet $\cF$ satisfies $\cF\in \cF_h^\partial$, we can obtain the definition of average and jump in the domain boundary by taking $K=K'$ and $K'=0$ in the above definitions, respectively.


Motivated by \cite{MR2220929}, let us define the space  $\boldsymbol{\boldsymbol{\mathcal{V}}}(h):=\boldsymbol{\boldsymbol{\mathcal{V}}}+\boldsymbol{\boldsymbol{\mathcal{V}}}_h$ which we endow with the following norm
\begin{equation*}
\|\bv\|_{\boldsymbol{\boldsymbol{\mathcal{V}}}(h)}^2=\|\bv\|_{0.\O}^2+\|\nabla_h\bv\|_{0,\O}^2+\|h_{\cF}^{-1/2}\jump{\bv}\|_{0,\mathcal{F}_h}^2.
\end{equation*}
 Finally, given a function $\xi(x)\in \L^{\infty}(\O)$, we introduce the following trace inequality \cite[Section 3]{MR4673997}
\begin{equation}
\label{eq:dg_trace}
\|h^{1/2}\set{\xi(x)\bv}\|_{0,\mathcal{F}}\leq \overline{C}_\xi \|\bv\|_{0,\Omega}\quad\forall  \bv\in \boldsymbol{\cP}_k(\cT_h),
\end{equation}
where $\overline{C}_\xi$ is a positive constant independent of the mesh but depending on $\xi(x)$.


\subsection{Symmetric and nonsymmetric DG schemes}

With the discrete spaces  previously defined, we introduce the discrete counterpart of Problem \ref{prob:continuous} as follows: Find $\lambda_h\in\mathbb{C}$ and $(\boldsymbol{0},0)\neq(\bu_h,p_h)\in\boldsymbol{\mV}_h\times\mathcal{Q}_h$ such that
\begin{equation}\label{eq:weak_stokes_system_dg}
\begin{array}{rcll}
 a_h(\bu_h,\bv_h)+b_h(\bv_h,p_h) & = & \lambda_h(\bu_h,\bv_h)_{0,\O} &\forall\ \bv_h\in \boldsymbol{\boldsymbol{\mathcal{V}}}_h,\\
\ds b_h(\bu_h,q_h) & = & 0 &\forall\ q_h\in \mathcal{Q}_h,
\end{array}
\end{equation}
where the continuous  sesquilinear form $a_h:\boldsymbol{\mV}_h\times\boldsymbol{\mV}_h\rightarrow\mathbb{C}$ is defined, for all $(\bu_h,\bv_h)\in\boldsymbol{\mV}_h\times\boldsymbol{\mV}_h$, by $a_h(\bu_h,\bv_h):=a^{\mathbb{K}}(\bu_h,\bv_h)+a_h^{\nabla}(\bu_h,\bv_h)$, where 
\begin{equation*}
a^{\mathbb{K}}(\bu_h,\bv_h):=\int_{\Omega} \mathbb{K}^{-1} \bu_h \cdot \bv_h,\qquad\forall (\bu_h,\bv_h)\in \boldsymbol{\mV}_h\times\boldsymbol{\mV}_h,
\end{equation*}
and
\begin{multline}
\label{eq:Ah}
a_h^{\nabla}(\bu_h,\bv_h):=\int_{\O}\nu\nabla_h\bu_h:\nabla_h\bv_h+ \int_{\cF^*_h}\frac{\texttt{a}_S}{h_{\cF}}\nu\jump{\bu_h}:\jump{\bv_h}\\
- \int_{\cF^*_h}\mean{\nu\nabla_h\bu_h}:\jump{\bv_h}
-\varepsilon \int_{\cF^*_h}\mean{\nu\nabla_h\bv_h}:\jump{\bu_h},\qquad\forall (\bu_h,\bv_h)\in \boldsymbol{\mV}_h\times\boldsymbol{\mV}_h.
\end{multline}
In \eqref{eq:Ah} the parameter  $\texttt{a}_S>0$, which is commonly named as the \textit{stabilization parameter},  is independent of the mesh size and will have an important influence on the computation of the spectrum as we will see in the forthcoming analys and more precisely on the numerical tests. On the other hand, the parameter $ \varepsilon\in\{-1,0,1\}$  dictates if the IPDG methods result to be  symmetric or non-symmetric. More precisely, 
if  $\varepsilon=-1$ we obtain the non-symmetric interior penalty method (NIP)
and if $\varepsilon=0$ the incomplete interior penalty method (IIP). 

It is easy to check that  $a_h(\cdot,\cdot)$ is a continuous  sesquilinear form, i.e., there exists a positive constant $C^{\star}$ such that for all $\bv_h,\bw_h\in\boldsymbol{\mathcal{V}}(h)$ there holds
\begin{equation}
\label{eq:bound_ah}
|a_h(\bv_h,\bw_h)|=|a_h^{\nabla}(\bv_h,\bw_h)+a^{\mathbb{K}}(\bv_h,\bw_h)|\leq C^{\star}\|\bv_h\|_{\boldsymbol{\mathcal{V}}(h)}\|\bw_h\|_{\boldsymbol{\mathcal{V}}(h)},
\end{equation}
where $C^{\star}:=\max\{\nu_{\max},\texttt{a}_S\nu_{\max}, |\varepsilon|\overline{C}_\nu,\overline{C}_{\nu},\mathbb{K}^*\}$.

Finally we define the bounded sesquilinear form   $b_h:\boldsymbol{\mathcal{V}}_h\times\mathcal{Q}_h\rightarrow \mathbb{C}$  by
$$
b_h(\bv_h,q_h):=-\int_{\O}\div_h\bv_h q_h+\int_{\cF_h^*}\mean{q_h}\jump{\underline{\bv_h}},\quad\forall\bv\in\boldsymbol{\mathcal{V}}_h,\,\forall q_h\in\mathcal{Q}_h.
$$

Defining the sesquilinear for $A_h(\cdot,\cdot)$ by 
\begin{equation*}
A_h((\bu_h,p_h),(\bv_h,q_h)):= a_h(\bu_h,\bv_h)+b_h(\bv_h,p_h) +b_h(\bu_h,q_h),
\end{equation*}
 for all $(\bu_h,p_h),(\bv_h,q_h)\in\boldsymbol{\mathcal{V}}_h\times \mathcal{Q}_h$, we rewrite \eqref{eq:weak_stokes_system_dg} as follows: Find $\lambda_h\in\mathbb{C}$ and $(\bu_h,p_h)\in\boldsymbol{\mathcal{V}}_h\times \mathcal{Q}_h$
such that
\begin{equation*}
A_h((\bu_h,p_h),(\bv_h,q_h))=\lambda_h(\bu_h,\bv_h)\quad\forall(\bv_h,q_h)\in\boldsymbol{\mathcal{V}}_h\times \mathcal{Q}_h.
\end{equation*}
Observe that $A_h(\cdot,\cdot)$ is a bounded sesquilinear form due to the boundedness of $a_h(\cdot,\cdot)$ and $b_h(\cdot,\cdot)$.

Let us recall the following discrete inf-sup condition \cite[Proposition 10]{MR1886000}: there exists a constant $\widehat{\beta}>0$, independent of $h$, such that 
\begin{equation}
\label{eq:disc_infsup}
\displaystyle\sup_{\btau_h\in\boldsymbol{\mV}_h}\frac{b_h(\btau_h,q_h)}{\|\btau_h\|_{\boldsymbol{\boldsymbol{\mathcal{V}}}(h)}}\geq\widehat{\beta}\|q_h\|_{0,\O}\quad\forall q_h\in\mathcal{Q}_h.
\end{equation}

On the other hand, let us define the discrete kernel $\boldsymbol{\mathcal{K}}_h$ of $b_h(\cdot,\cdot)$ as follows
\begin{equation*}
\boldsymbol{\mathcal{K}}_h:=\{\btau_h\in\boldsymbol{\mV}_h\,:\,  b_h(\btau_h,\bv_h)=0\,\,\,\,\forall\bv_h\in\boldsymbol{\boldsymbol{\mathcal{V}}}_h\}.
\end{equation*}

With this kernel at hand, now we prove that  $a_h(\cdot,\cdot)$ is  $\boldsymbol{\mathcal{K}}_h$- coercive.
\begin{lemma}[ellipticity of $a_h(\cdot,\cdot)$]
\label{lmm:disc_ellipticity}
For any $\varepsilon\in\{-1,0,1\}$, there exists a positive parameter $\texttt{a}^*$ such that for all $\texttt{a}_S\geq \texttt{a}^*$ there holds
$$a_h(\bv_h,\bv_h)\geq \widehat{\alpha}\|\bv_h\|_{\boldsymbol{\boldsymbol{\mathcal{V}}}(h)}^2\quad\forall\bv_h\in \boldsymbol{\mathcal{K}}_h,$$
where $\widehat{\alpha}>0$ is independent of $h$.
\end{lemma}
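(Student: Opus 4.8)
The plan is to test the form against itself and reduce the claim to a careful treatment of the single surviving consistency term, the permeability and penalty contributions being manifestly of the right sign. Note first that $a_h(\bv_h,\bv_h)$ does not see $b_h(\cdot,\cdot)$ at all, so the stated $\boldsymbol{\mathcal{K}}_h$-coercivity will in fact follow from coercivity on all of $\boldsymbol{\mV}_h$; the restriction to the discrete kernel plays no role here. Setting $\bu_h=\bv_h$ in \eqref{eq:Ah}, the two consistency integrals collapse into one, giving
\begin{equation*}
a_h(\bv_h,\bv_h)=\int_{\Omega}\mathbb{K}^{-1}\bv_h\cdot\bv_h+\nu\|\nabla_h\bv_h\|_{0,\Omega}^2+\texttt{a}_S\nu\big\|h_{\cF}^{-1/2}\jump{\bv_h}\big\|_{0,\cF^*_h}^2-(1+\varepsilon)\int_{\cF^*_h}\mean{\nu\nabla_h\bv_h}:\jump{\bv_h}.
\end{equation*}
The first three terms already reproduce the three pieces of $\|\cdot\|_{\boldsymbol{\boldsymbol{\mathcal{V}}}(h)}^2$: the lower bound $\mathbb{K}_*\|\bv_h\|_{0,\Omega}^2$ controls the $\L^2$ contribution (so that, under the standing assumption $\mathbb{K}_*>0$, no discrete Poincar\'e inequality is needed), while the broken gradient and the penalty supply the remaining two, the jump seminorm being understood over the penalized skeleton $\cF^*_h$. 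For $\varepsilon=-1$ the last term vanishes identically, so coercivity holds with $\widehat{\alpha}=\min\{\mathbb{K}_*,\nu,\texttt{a}_S\nu\}$ for every $\texttt{a}_S>0$ and nothing further is required.

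The hard part is the consistency term for $\varepsilon\in\{0,1\}$, where $1+\varepsilon\in\{1,2\}$ and the term is genuinely present with an unfavourable sign. I would estimate it by writing its integrand as $(h_{\cF}^{1/2}\mean{\nu\nabla_h\bv_h}):(h_{\cF}^{-1/2}\jump{\bv_h})$, applying Cauchy--Schwarz over $\cF^*_h$, and then invoking the trace inequality \eqref{eq:dg_trace} (entrywise, with $\xi=\nu$ and $\bv=\nabla_h\bv_h\in\boldsymbol{\cP}_{k-1}(\cT_h)\subset\boldsymbol{\cP}_k(\cT_h)$) to obtain $\|h_{\cF}^{1/2}\mean{\nu\nabla_h\bv_h}\|_{0,\cF^*_h}\le\overline{C}_\nu\|\nabla_h\bv_h\|_{0,\Omega}$. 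A Young inequality with a free parameter $\delta>0$ then yields
\begin{equation*}
(1+\varepsilon)\Big|\int_{\cF^*_h}\mean{\nu\nabla_h\bv_h}:\jump{\bv_h}\Big|\le\frac{(1+\varepsilon)\overline{C}_\nu\,\delta}{2}\|\nabla_h\bv_h\|_{0,\Omega}^2+\frac{(1+\varepsilon)\overline{C}_\nu}{2\delta}\big\|h_{\cF}^{-1/2}\jump{\bv_h}\big\|_{0,\cF^*_h}^2.
\end{equation*}

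Finally I would tune the constants so that both perturbations are absorbed. Choosing $\delta=\nu/((1+\varepsilon)\overline{C}_\nu)$ consumes exactly half of the gradient term, leaving $\tfrac{\nu}{2}\|\nabla_h\bv_h\|_{0,\Omega}^2$, and turns the penalty coefficient into $\texttt{a}_S\nu-\tfrac{(1+\varepsilon)^2\overline{C}_\nu^2}{2\nu}$. This is positive precisely when $\texttt{a}_S$ exceeds the threshold $\texttt{a}^*:=(1+\varepsilon)^2\overline{C}_\nu^2/(2\nu^2)$, which is the origin of the hypothesis $\texttt{a}_S\ge\texttt{a}^*$. Collecting the three surviving nonnegative terms gives $a_h(\bv_h,\bv_h)\ge\widehat{\alpha}\|\bv_h\|_{\boldsymbol{\boldsymbol{\mathcal{V}}}(h)}^2$ with $\widehat{\alpha}:=\min\{\mathbb{K}_*,\tfrac{\nu}{2},\texttt{a}_S\nu-\tfrac{(1+\varepsilon)^2\overline{C}_\nu^2}{2\nu}\}>0$, independent of $h$ since $\overline{C}_\nu$ is. The essential obstacle is thus the balancing of the trace constant $\overline{C}_\nu$ against the stabilization: the penalty must dominate the consistency term uniformly in $h$, which forces $\texttt{a}_S$ above a viscosity- and trace-dependent threshold for the incomplete and symmetric variants, whereas the non-symmetric method is exempt because its consistency term cancels on the diagonal.
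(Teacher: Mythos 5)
Your proof is correct and follows essentially the same route as the paper: the paper's proof simply invokes \cite[Lemma 1]{MR4623018} for the broken-gradient part, which internally performs exactly your trace-inequality-plus-Young absorption with a free parameter (the paper's $\tau$ playing the role of your $\delta$), and then adds the bound $a^{\mathbb{K}}(\bv_h,\bv_h)\geq\mathbb{K}_*\|\bv_h\|_{0,\O}^2$ precisely as you do, without ever using the kernel restriction. Note that, like the paper's own proof, your argument yields a positive coercivity constant only for $\texttt{a}_S$ strictly above the threshold, so the non-strict inequality in the statement is a shared, inessential slip rather than a gap on your part.
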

\begin{proof}
Let $\bv_h\in\boldsymbol{\mathcal{K}}_h$. Then, we  have $a_h(\bv_h,\bv_h)=a^{\mathbb{K}}(\bv_h,\bv_h)+a_h^{\nabla}(\bv_h,\bv_h)$. We observe that the estimate for $a_h^{\nabla}(\bv_h,\bv_h)$ is proved in \cite[Lemma 1]{MR4623018}. In fact, for $\tau>0$ we have that
$$a_h^{\nabla}(\bv_h,\bv_h)\geq\nu^*\underbrace{\left(1-C_t\frac{(1+\varepsilon)}{2\tau}\right)}_{C_1}\|\nabla_h\bv_h\|_{0,\O}^2+\nu^*\underbrace{\left(\texttt{a}_S-\frac{\tau(1+\varepsilon)}{2}\right)}_{C_2}\|h_{\cF}^{-1/2}\jump{\bv_h}\|_{0,\cF_h^*}^2,$$
where $C_t>0$ is the constant provided by \eqref{eq:dg_trace}. According to  \cite[Lemma 1]{MR4623018},  $C_1>0$ if and only if $\tau$ is such that  $\tau>(1+\varepsilon)C_t/2$. On the other hand,  $\texttt{a}_S$ is chosen in such a way that $\texttt{a}_S>\texttt{a}^*:=(1+\varepsilon)\tau/2$, where $\tau$ has been previously determined. Now, for $a^{\mathbb{K}}(\cdot,\cdot)$ we have $a^{\mathbb{K}}(\bv_h,\bv_h)\geq\displaystyle\mathbb{K}_{*}\|\bv_h\|_{0,\O}^2$. Hence, $a_{h}(\cdot,\cdot)$ is $\boldsymbol{\mathcal{K}}$-elliptic with constant $\widehat{\alpha}:=\nu^*\min\{C_1,C_2,\mathbb{K}_{*}\}$. This concludes the proof.
\end{proof}

As a consequence of Lemma \ref{lmm:disc_ellipticity} and the definition of $A_h(\cdot,\cdot)$, is possible to conclude that for all $(\bv_h,q_h)\in\boldsymbol{\mathcal{K}}_h\times\mathcal{Q}_h$ there holds
\begin{equation*}
A_h((\bv_h,q_h),(\bv_h,q_h))\geq \widehat{\alpha}\|\bv_h\|_{\boldsymbol{\mathcal{V}}(h)}^2,
\end{equation*}
where $\widehat{\alpha}>0$ is the constant previously determined.

Its is important to observe that the constant $\widehat{\alpha}$ of Lemma \ref{lmm:disc_ellipticity} depends on the choice of 
the penalization parameter $\texttt{a}_S$ which depends on the geometry of $\Omega$ and the physical parameters. This means that the well posedness of the source problem at discrete level  is directly related to the configuration of the these parameters.
 
Let us  introduce the discrete solution operators for the approximations of the velocity and pressure.  More precisely, we define these operators in the following manner
\begin{align*}
\bT_h&:\boldsymbol{\L}^2(\O)\rightarrow\boldsymbol{\mV}_h,\,\,\,\bF\mapsto\bT_h\bF:=\widetilde{\bu}_h, \\
\mathcal{S}_h&:\boldsymbol{\L}^2(\O)\rightarrow\mathcal{Q}_h,\,\,\,\bF\mapsto\mathcal{S}_h\bF:=\widetilde{p}_h.
\end{align*}
where  the pair $(\widetilde{\bu}_h,\widetilde{p}_h)\in\boldsymbol{\boldsymbol{\mathcal{V}}}_h\times\mathcal{Q}_h$ corresponds to the solution of the following discrete source problem
\begin{equation}\label{eq:weak_stokes_system_dg_source}
\begin{array}{rcll}
 a_h(\widetilde{\bu}_h,\bv_h)+b_h(\bv_h,\widetilde{p}_h) & = & (\boldsymbol{f},\bv_h)_{0,\O} &\forall\ \bv_h\in \boldsymbol{\boldsymbol{\mathcal{V}}}_h,\\
\ds b_h(\widetilde{\bu}_h,q_h) & = & 0 &\forall\ q_h\in \mathcal{Q}_h,
\end{array}
\end{equation}
which equivalently is written as the following problem
\begin{equation*}
A_h((\widetilde{\bu}_h,\widetilde{p}_h),(\bv_h,q_h))=(\boldsymbol{f},\bv_h)_{0,\O}\quad\forall(\bv_h,q_h)\in\boldsymbol{\mathcal{V}}_h\times \mathcal{Q}_h,
\end{equation*}

From \eqref{eq:disc_infsup} and Lemma \ref{lmm:disc_ellipticity}, together with the Babu\v ska-Brezzi theory, we conclude that \eqref{eq:weak_stokes_system_dg_source} has a unique solution $(\widetilde{\bu}_h,\widetilde{p}_h)\in\boldsymbol{\boldsymbol{\mathcal{V}}}_h\times\mathcal{Q}_h$  and as a consequence,  operators $\bT_h$ and $\mathcal{S}_h$ are
well defined. 

For the solutions of the continuous and discrete source problems, the following C\'ea estimate holds
\begin{equation*}
\label{eq:cea}
\|(\widetilde{\bu}-\widetilde{\bu}_h,\widetilde{p}-\widetilde{p}_h)\|_{\boldsymbol{\mathcal{V}}(h)\times\mathcal{Q}_h}\\
\leq C\inf_{(\widetilde{\bv}_h,\widetilde{q}_h)\in\boldsymbol{\mathcal{V}}_h\times\mathcal{Q}_h}\|(\widetilde{\bu},\widetilde{p})-(\widetilde{\bv}_h,\widetilde{q}_h)\|_{\boldsymbol{\mathcal{V}}(h)\times\mathcal{Q}_h},
\end{equation*}
where $C$, depends on the continuity constant of $a_h(\cdot,\cdot)$ and the discrete inf-sup constant given in \eqref{eq:disc_infsup}.
Then we have the following error convergence result for the source problem (see \cite[Corollary 6.22]{MR2882148})
 \begin{equation*}
\|\widetilde{\bu}-\widetilde{\bu}_h\|_{\mathcal{V}(h)}+\|\widetilde{p}-\widetilde{p}_h\|_{0,\O}\leq C h^s(\|\widetilde{\bu}\|_{1+s}+\|\widetilde{p}\|_{s})\leq C h^s\|\boldsymbol{f}\|_{0,\O},
\end{equation*}
where $C$ depends on the physical constants, but is independent of $h$.

\section{Convergence and error estimates}
\label{sec:conv_error}
Now our goal is to  prove of convergence of the IPDG scheme and error estimates. To do this task, we resort to the theory of \cite{MR483400,MR483401}, due to the non-conformity of the proposed methods. let us introduce some preliminary definitions and notations. We denote by $\norm{\cdot}_{\mathcal{L}(\boldsymbol{\boldsymbol{\mathcal{V}}}(h),\boldsymbol{\boldsymbol{\mathcal{V}}}(h))}$
the corresponding norm acting from $\boldsymbol{\boldsymbol{\mathcal{V}}}(h)$ into the same space.
In addition, we will denote by $\norm{\cdot}_{\mathcal{L}(\boldsymbol{\boldsymbol{\mathcal{V}}}_h,\boldsymbol{\boldsymbol{\mathcal{V}}}(h))}$
the norm of an operator restricted to the discrete subspace $\boldsymbol{\boldsymbol{\mathcal{V}}}_h$;
namely, if $\boldsymbol{L}:\boldsymbol{\boldsymbol{\mathcal{V}}}(h)\to \boldsymbol{\boldsymbol{\mathcal{V}}}(h)$, then
\begin{equation*}
\norm{\boldsymbol{L}}_{\mathcal{L}(\boldsymbol{\boldsymbol{\mathcal{V}}}_h, \boldsymbol{\boldsymbol{\mathcal{V}}}(h))}:=\sup_{\0\neq\btau_h\in\boldsymbol{\boldsymbol{\mathcal{V}}}_h}
\frac{\norm{\boldsymbol{L}\btau_h}_{\boldsymbol{\boldsymbol{\mathcal{V}}}(h)}}{\norm{\btau_h}_{\boldsymbol{\boldsymbol{\mathcal{V}}}(h)}}.
\end{equation*}

According to \cite{MR483400}, to establish spectral correctness we need  to prove the following properties
\begin{itemize}
\item P1. $\norm{\bT-\bT_h}_{\mathcal{L}(\boldsymbol{\boldsymbol{\mathcal{V}}}_h, \boldsymbol{\boldsymbol{\mathcal{V}}}(h))}\to0$ as $h\to0$.
\item P2. $\forall\btau\in\boldsymbol{\boldsymbol{\mathcal{V}}}$, there holds
$$\inf_{\btau_h\in \boldsymbol{\boldsymbol{\mathcal{V}}}_h}\norm{\btau-\btau_h}_{\boldsymbol{\boldsymbol{\mathcal{V}}}(h)}\,\,\to0\quad\text{as}\quad h\to0.$$
\end{itemize}
We observe that property P2 is immediate as a consequence of the density of continuous piecewise degree $k$ polynomial functions in $\boldsymbol{\boldsymbol{\mathcal{V}}}$. On the other hand, P1 is not direct, and our goal is to prove it.

\subsection{Convergence}
The IPDG methods considered for the  Stokes-Brinkman eigenvalue problem does not differ from the Stokes eigenvalue problem studied in \cite{MR4623018} except for the permeability term associated to the Brinkman equations. This implies that the convergence analysis for the Stokes-Brinkman  eigenvalue problem is the same as in \cite{MR4623018}. However, and for completeness of the analysis, we summarize the results that are possible to derive. The first result that is needed is the following (see \cite[Lemma 2]{MR4623018}).
\begin{lemma}
\label{lmm:TTh}
For all $\boldsymbol{f}\in \boldsymbol{\mathcal{V}}$, we define  $\widetilde{\bu}:=\bT\boldsymbol{f}$ and $\widetilde{p}:=\mathcal{S}\boldsymbol{f}$ as the solutions of \eqref{eq:porous_system_continuous_source}, whereas   $\widetilde{\bu}_h:=\bT_h\boldsymbol{f}$ and $\widetilde{p}_h:=\mathcal{S}_h\boldsymbol{f}$ are the solutions of \eqref{eq:weak_stokes_system_dg_source}. Then, the following estimate holds
\begin{equation*}
\|(\bT-\bT_h)\boldsymbol{f}\|_{\boldsymbol{\boldsymbol{\mathcal{V}}}(h)}\leq C h^{\min\{s,k\}}\|\boldsymbol{f}\|_{0,\O},
\end{equation*}
where $s>0$ and the hidden constant is independent of $h$ and $C>0$  depends on the physical constants.
\end{lemma}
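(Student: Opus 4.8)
The plan is to exploit that $(\bT-\bT_h)\boldsymbol{f}=\widetilde{\bu}-\widetilde{\bu}_h$, so that the assertion is precisely a velocity error estimate for the discrete source problem \eqref{eq:weak_stokes_system_dg_source}. Since the continuous and discrete saddle-point problems are well posed (Babu\v ska--Brezzi, via \eqref{eq:cont_inf_sup}, \eqref{eq:disc_infsup} and Lemma \ref{lmm:disc_ellipticity}), I would reduce the estimate to a best-approximation bound through the quasi-optimality inequality \eqref{eq:cea}, then control the best-approximation error by standard polynomial approximation theory in the broken norm $\|\cdot\|_{\boldsymbol{\mathcal{V}}(h)}$, and finally absorb the Sobolev norms of $(\widetilde{\bu},\widetilde{p})$ using Theorem \ref{th:regularidadfuente}.

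First I would verify the \emph{consistency} of the IPDG formulation, which is what legitimizes the use of \eqref{eq:cea}. Since $\widetilde{\bu}\in\boldsymbol{\H}^{1+s}(\O)\subset\boldsymbol{\H}^1(\O)$, its total jump $\jump{\widetilde{\bu}}$ vanishes on every interior face and on $\G_1$, and the normal component of the traction $\nu\nabla\widetilde{\bu}-\widetilde{p}\mathbb{I}$ is continuous across interior faces. Hence, integrating by parts element by element and using the strong form of the Brinkman system satisfied by $(\widetilde{\bu},\widetilde{p})$, one obtains $A_h((\widetilde{\bu},\widetilde{p}),(\bv_h,q_h))=(\boldsymbol{f},\bv_h)_{0,\O}$ for every $(\bv_h,q_h)\in\boldsymbol{\mV}_h\times\mathcal{Q}_h$. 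This consistency, together with the continuity \eqref{eq:bound_ah}, the discrete inf--sup condition \eqref{eq:disc_infsup} and the coercivity of Lemma \ref{lmm:disc_ellipticity}, yields the quasi-optimality estimate \eqref{eq:cea}.

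Next I would estimate the best-approximation error. Choosing a Scott--Zhang / $\L^2$-type interpolant $\bPi_h\widetilde{\bu}\in\boldsymbol{\mV}_h$ and the $\L^2$-orthogonal projection $\pi_h\widetilde{p}\in\mathcal{Q}_h$, I would bound, element by element,
\begin{equation*}
\|\widetilde{\bu}-\bPi_h\widetilde{\bu}\|_{0,K}+h_K\|\nabla(\widetilde{\bu}-\bPi_h\widetilde{\bu})\|_{0,K}\leq C\,h_K^{\min\{s,k\}+1}\,|\widetilde{\bu}|_{1+s,K},
\end{equation*}
and control the jump seminorm face by face via \eqref{eq:dg_trace} and the scaled trace estimate $\|h_F^{-1/2}(\widetilde{\bu}-\bPi_h\widetilde{\bu})\|_{0,F}\leq C(h_K^{-1}\|\widetilde{\bu}-\bPi_h\widetilde{\bu}\|_{0,K}+\|\nabla(\widetilde{\bu}-\bPi_h\widetilde{\bu})\|_{0,K})$. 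As $\jump{\widetilde{\bu}}=\0$, the jump of $\widetilde{\bu}-\bPi_h\widetilde{\bu}$ equals the jump of the interpolation error, and summing the three contributions over $\cT_h$ and $\cF_h$ gives
\begin{equation*}
\inf_{(\bv_h,q_h)\in\boldsymbol{\mV}_h\times\mathcal{Q}_h}\|(\widetilde{\bu},\widetilde{p})-(\bv_h,q_h)\|_{\boldsymbol{\mathcal{V}}(h)\times\mathcal{Q}_h}\leq C\,h^{\min\{s,k\}}\bigl(\|\widetilde{\bu}\|_{1+s,\O}+\|\widetilde{p}\|_{s,\O}\bigr),
\end{equation*}
the exponent $\min\{s,k\}$ arising because the velocity space $\boldsymbol{\cP}_k$ caps the attainable order when $s>k$.

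Finally, inserting this into \eqref{eq:cea} and invoking the a priori bound $\|\widetilde{\bu}\|_{1+s,\O}+\|\widetilde{p}\|_{s,\O}\leq C\|\boldsymbol{f}\|_{0,\O}$ of Theorem \ref{th:regularidadfuente} yields the claimed estimate. I expect the main obstacle to be the consistency verification of the nonconforming forms---specifically, justifying that all jump-valued face integrals involving the exact solution vanish, so that \eqref{eq:cea} is genuinely a quasi-optimality bound---rather than the approximation step, which is routine once a suitable interpolant and \eqref{eq:dg_trace} are in place; note that the delicate traces of the exact solution across interior faces cause no difficulty precisely because those jumps are identically zero, so no lower bound on $s$ beyond $s>0$ is needed. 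As observed in the text, the whole argument parallels \cite[Lemma 2]{MR4623018}, the only new ingredient being the zeroth-order permeability term $a^{\mathbb{K}}(\cdot,\cdot)$, which is continuous and coercive on $\boldsymbol{\L}^2(\O)$ and therefore does not alter the convergence order.
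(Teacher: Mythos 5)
Your proposal is correct and takes essentially the same route as the paper: the paper establishes this lemma by citing \cite[Lemma 2]{MR4623018} together with the C\'ea quasi-optimality estimate and the source-problem convergence bound already stated at the end of Section \ref{sec:DG}, whose ingredients (consistency, continuity \eqref{eq:bound_ah}, discrete inf-sup \eqref{eq:disc_infsup}, kernel coercivity from Lemma \ref{lmm:disc_ellipticity}, broken-norm interpolation, and the regularity of Theorem \ref{th:regularidadfuente}) are exactly the ones you assemble, the permeability term $a^{\mathbb{K}}(\cdot,\cdot)$ being harmless as you note. The one point where you overstate matters is the claim that consistency requires no lower bound on $s$: the face terms $\int_F\mean{\nu\nabla_h\widetilde{\bu}}:\jump{\bv_h}$ require elementwise $\L^2$ traces of $\nabla\widetilde{\bu}$, which for $0<s\leq 1/2$ demands either extra justification (e.g.\ lifting operators) or implicitly more regularity---a subtlety the paper itself glosses over in the same way, so it does not separate your argument from theirs.
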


This result is directly extended for discrete sources as is stated in \cite[Corollary 1]{MR4623018}. 

\begin{corollary}
\label{lmm:P1}
There  following estimate holds 
\begin{equation*}\label{P1}
\norm{\boldsymbol{T}-\bT_h}_{\mathcal{L}(\boldsymbol{\boldsymbol{\mathcal{V}}}_h, \boldsymbol{\boldsymbol{\mathcal{V}}}(h))}\leq C h^{\min\{s,k\}},
\end{equation*}
where the hidden constant is independent of $h$.
\end{corollary}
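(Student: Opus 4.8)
The plan is to deduce this estimate directly from Lemma~\ref{lmm:TTh} by reinterpreting a discrete function as an admissible source and then comparing the two norms that appear in the operator-norm quotient. The estimate of Lemma~\ref{lmm:TTh} already controls $\|(\bT-\bT_h)\boldsymbol{f}\|_{\boldsymbol{\mathcal{V}}(h)}$ by $\|\boldsymbol{f}\|_{0,\O}$; the corollary simply asks for the same control but with the input measured in the stronger norm $\|\cdot\|_{\boldsymbol{\mathcal{V}}(h)}$ and restricted to discrete arguments.

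First I would fix an arbitrary $\0\neq\btau_h\in\boldsymbol{\mathcal{V}}_h$ and regard it as a source term, setting $\boldsymbol{f}:=\btau_h$. Since both solution operators $\bT$ and $\bT_h$ are defined on all of $\boldsymbol{\L}^2(\O)$ and $\boldsymbol{\mathcal{V}}_h\subset\boldsymbol{\L}^2(\O)$, the element $(\bT-\bT_h)\btau_h\in\boldsymbol{\mathcal{V}}(h)$ is well defined. Next I would invoke the estimate of Lemma~\ref{lmm:TTh} with this choice of datum to obtain
\begin{equation*}
\|(\bT-\bT_h)\btau_h\|_{\boldsymbol{\mathcal{V}}(h)}\leq C h^{\min\{s,k\}}\|\btau_h\|_{0,\O}.
\end{equation*}
The point requiring attention here is that Lemma~\ref{lmm:TTh} is phrased for $\boldsymbol{f}\in\boldsymbol{\mathcal{V}}$, whereas $\btau_h$ is a genuinely non-conforming (discontinuous) function. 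However, the proof of that lemma uses only the $\boldsymbol{\L}^2$-regularity result of Theorem~\ref{th:regularidadfuente} for the continuous solution together with the C\'ea-type and duality estimates for the source problem, none of which requires the datum to be conforming; consequently the bound remains valid for any $\boldsymbol{\L}^2(\O)$ datum, in particular for $\btau_h\in\boldsymbol{\mathcal{V}}_h$.

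Finally, recalling the definition of the norm $\|\cdot\|_{\boldsymbol{\mathcal{V}}(h)}$, which contains $\|\cdot\|_{0,\O}^2$ as one of its three nonnegative summands, we have the elementary domination $\|\btau_h\|_{0,\O}\leq\|\btau_h\|_{\boldsymbol{\mathcal{V}}(h)}$. Dividing the previous inequality by $\|\btau_h\|_{\boldsymbol{\mathcal{V}}(h)}$ and taking the supremum over all $\0\neq\btau_h\in\boldsymbol{\mathcal{V}}_h$ then yields the claimed operator-norm estimate with the same exponent $\min\{s,k\}$.

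The main (and in fact only) obstacle is the bookkeeping in the middle step: one must be certain that the hypotheses of Lemma~\ref{lmm:TTh} are not secretly restricted to conforming sources, so that the estimate genuinely transfers from $\boldsymbol{f}\in\boldsymbol{\mathcal{V}}$ to $\btau_h\in\boldsymbol{\mathcal{V}}_h$. Once this is granted, the remaining ingredients are purely the trivial comparison $\|\cdot\|_{0,\O}\leq\|\cdot\|_{\boldsymbol{\mathcal{V}}(h)}$ and the passage to the supremum, so no further estimate on the $\boldsymbol{\mathcal{V}}(h)$-norm of $\btau_h$ is needed.
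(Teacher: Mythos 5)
Your proof is correct and follows essentially the same route as the paper, which simply invokes \cite[Corollary 1]{MR4623018}: apply the source estimate of Lemma~\ref{lmm:TTh} to a discrete datum $\btau_h\in\boldsymbol{\mathcal{V}}_h\subset\boldsymbol{\L}^2(\O)$ (legitimate because the regularity of Theorem~\ref{th:regularidadfuente} and the C\'ea/duality estimates only require an $\boldsymbol{\L}^2$ source), then use $\|\btau_h\|_{0,\O}\leq\|\btau_h\|_{\boldsymbol{\mathcal{V}}(h)}$ and take the supremum. Your explicit justification of why the lemma extends beyond conforming sources is exactly the point that makes the cited corollary ``direct.''
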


The following result indicates a gain of one additional order in the approximation of the error in the $\L^2$ norm for the solution operators $\bT$ and $\bT_h$. The proof follows the 
techniques in  \cite{MR2085402}. Moreover, for the proof we will assume, $s>1/2$ which is perfectly reasonable, according to the results presented in \cite{MR4231512,MR3145765}.
\begin{theorem}\label{thm_mejor_orden}
Let $\O$ be a convex domain. Let us assume $s>1/2$. Then, under the hypotheses of Lemma \ref{lmm:TTh}, there exists a constant $C$, independent of $h$,  such that
$$\|\widetilde{\bu}-\widetilde{\bu}_h\|_{0,\O}\leq Ch^{2\min\{s,k\}}(|\widetilde{\bu}|_{1+s,\O}+|\widetilde{p}|_{s,\O})\leq Ch^{2\min\{s,k\}}\|\boldsymbol{f}\|_{0,\O},$$
where $C>0$ depends on the physical constants and independent of $h$.
\end{theorem}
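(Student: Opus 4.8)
The plan is to establish the improved $\L^2$ estimate for the velocity via a duality (Aubin--Nitsche) argument, exploiting the additional regularity afforded by Theorem~\ref{th:regularidadfuente} together with the convexity of $\Omega$. The starting point is to introduce the auxiliary dual problem: for the error $\be := \widetilde{\bu}-\widetilde{\bu}_h$, let $(\bphi,\xi)\in\boldsymbol{\H}_{\Gamma_1}(\Omega)\times\L^2(\Omega)$ solve the adjoint source problem $A((\bv,q),(\bphi,\xi))=(\be,\bv)_{0,\O}$ for all test pairs. Since the continuous bilinear form $A(\cdot,\cdot)$ is symmetric in the relevant sense and $\be\in\boldsymbol{\L}^2(\O)$, Theorem~\ref{th:regularidadfuente} applies to the dual solution and yields $\bphi\in\boldsymbol{\H}^{1+s}(\Omega)$, $\xi\in\H^s(\Omega)$ with $\|\bphi\|_{1+s,\O}+\|\xi\|_{s,\O}\leq C\|\be\|_{0,\O}$. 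Convexity and $s>1/2$ are what guarantee this regularity is genuinely usable (in particular that the traces and averages appearing in the DG consistency terms make sense).

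Next I would test the dual problem with $\be$, so that $\|\be\|_{0,\O}^2=(\be,\be)_{0,\O}=A((\be,\ep),(\bphi,\xi))$, where $\ep:=\widetilde{p}-\widetilde{p}_h$ is the pressure error, and then insert Galerkin orthogonality. The key algebraic step is to rewrite this using the discrete form $A_h$ and to add and subtract a suitable interpolant $(\bphi_I,\xi_I)\in\boldsymbol{\mV}_h\times\mathcal{Q}_h$ of the dual solution. The nonconformity of the IPDG method means that $A$ and $A_h$ do not agree on $\boldsymbol{\mathcal{V}}(h)$, so one must carefully account for the inconsistency/jump terms: the error representation becomes a sum of a Galerkin-orthogonality contribution, which is controlled by the product of the energy error $\|(\be,\ep)\|_{\boldsymbol{\mathcal{V}}(h)\times\mathcal{Q}_h}$ (already of order $h^{\min\{s,k\}}$ by Lemma~\ref{lmm:TTh} and the C\'ea estimate) and the interpolation error $\|(\bphi-\bphi_I,\xi-\xi_I)\|_{\boldsymbol{\mathcal{V}}(h)\times\mathcal{Q}_h}$ (of order $h^{\min\{s,k\}}\|\be\|_{0,\O}$ by the dual regularity), plus consistency terms arising from the facet integrals in $a_h^\nabla$ and $b_h$. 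The product of the two orders gives the sought $h^{2\min\{s,k\}}$, and dividing through by $\|\be\|_{0,\O}$ closes the estimate.

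The main obstacle will be controlling the \emph{consistency terms} that distinguish this nonconforming duality argument from the classical conforming Aubin--Nitsche one. Specifically, when one replaces $A$ by $A_h$ one picks up boundary/facet contributions of the form $\int_{\cF_h^*}\mean{\nu\nabla_h\be}:\jump{\bphi-\bphi_I}$ and the analogous pressure-jump terms from $b_h$; these must be bounded by inserting the continuous solution's normal flux (which has a well-defined trace thanks to $\bphi\in\boldsymbol{\H}^{1+s}$ with $s>1/2$) and using the trace inequality \eqref{eq:dg_trace} together with standard scaled trace/inverse estimates on facets. The delicate point is that a naive bound would only yield one power of $h$; one must exploit that $\jump{\bphi}=\0$ on interior facets for the continuous (conforming) dual solution, so that the jump terms reduce to interpolation jumps $\jump{\bphi-\bphi_I}$, each carrying the extra factor $h^{\min\{s,k\}-1/2}$ after scaling. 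Assembling these carefully—so that every facet term genuinely contributes the second power of $h$ rather than just one—is where the bulk of the technical work lies, and where the hypothesis $s>1/2$ is indispensable.
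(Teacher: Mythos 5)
Your overall strategy coincides with the paper's proof: an Aubin--Nitsche duality argument with the dual Stokes--Brinkman problem, the regularity $\bphi\in\boldsymbol{\H}^{1+s}(\O)$, $\xi\in\H^{s}(\O)$ with $\|\bphi\|_{1+s,\O}+\|\xi\|_{s,\O}\leq C\|\be\|_{0,\O}$, Galerkin orthogonality tested with an interpolant of the dual pair, and the product of two factors of order $h^{\min\{s,k\}}$. The gap lies in your accounting of the facet terms, which is exactly where the technical content of the proof sits. First, a point that is more than formal: $\be=\widetilde{\bu}-\widetilde{\bu}_h\notin\boldsymbol{\H}_{\Gamma_1}(\O)$, so the identity $\|\be\|_{0,\O}^2=A((\be,\ep),(\bphi,\xi))$ is not defined; one must test the strong dual equations against $\be$ and integrate by parts elementwise, which produces the terms
\[
\int_{\cF^*_h}\mean{\xi}\jump{\underline{\be}}-\int_{\cF^*_h}\mean{\nu\nabla\bphi}:\jump{\be},
\]
i.e.\ facet terms in which the jump falls on the \emph{error}, not on the dual solution. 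Your claim that, since $\jump{\bphi}=\0$, ``the jump terms reduce to interpolation jumps $\jump{\bphi-\bphi_I}$'' is true only for the easy terms of the form $\int_{\cF^*_h}\mean{\nu\nabla_h\be}:\jump{\bphi_I}$ (which in fact vanish identically if, as in the paper, one takes a conforming Lagrange interpolant of $\bphi$); it is false for the terms carrying $\jump{\be}$.

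Concretely, after subtracting $A_h((\be,\ep),(\bphi_I,\xi_I))=0$, the gradient facet contributions combine into
\[
\varepsilon\int_{\cF^*_h}\mean{\nu\nabla_h(\bphi_I-\bphi)}:\jump{\be}
\;+\;(\varepsilon-1)\int_{\cF^*_h}\mean{\nu\nabla\bphi}:\jump{\be}.
\]
The first piece pairs $\jump{\be}$ (of order $h^{\min\{s,k\}}$ in the scaled facet norm) with a dual interpolation error (another factor $h^{\min\{s,k\}}$), exactly like the paper's terms $T_5$ and $T_6$, and is harmless. The second piece is the adjoint-inconsistency term: it involves the \emph{exact} dual flux $\mean{\nu\nabla\bphi}$, which carries no smallness whatsoever, so Cauchy--Schwarz only yields $|\varepsilon-1|\,h^{\min\{s,k\}}\|\bphi\|_{1+s,\O}\|\boldsymbol{f}\|_{0,\O}$ --- a single power of $h$ after dividing by $\|\be\|_{0,\O}$. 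No choice of interpolant removes it; it cancels if and only if $\varepsilon=1$, which is precisely the adjoint consistency of the symmetric scheme and is the cancellation on which the paper's proof hinges. Your sketch, as written, would assert the double rate uniformly in $\varepsilon$, but the duality machinery delivers it only in the symmetric case --- consistent, incidentally, with the suboptimal behavior of the nonsymmetric variants that the paper itself predicts and observes numerically. To close the proof you must exhibit this cancellation explicitly rather than asserting that every facet term reduces to a dual-side interpolation jump.
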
 
\begin{proof}
Let us  define the following dual problem: Find $(\boldsymbol{\varphi},\psi)\in \boldsymbol{\H}_{\Gamma_1}(\Omega)\times\L^2(\O)$ such that
\begin{align*}
	\bbK^{-1}\boldsymbol{\varphi} - \nu \Delta \boldsymbol{\varphi} + \nabla \psi & = \widetilde{\bu}-\widetilde{\bu}_h & \text{in $\Omega$},\\
	\div\boldsymbol{\varphi} &= 0 & \text{in $\Omega$},\\
	\boldsymbol{\varphi} &= \boldsymbol{0}& \text{on $\Gamma_1$},  \\
	(\nu\nabla \boldsymbol{\varphi} - \psi\mathbb{I}) \bn   &=\boldsymbol{0} & \text{on $\Gamma_2$}
\end{align*}
Clearly the above solution satisfies the following data dependence
$$\|\boldsymbol{\varphi}\|_{1+s,\O}+\|\psi\|_{s,\O}\leq C_{\nu,\bbK}\|\widetilde{\bu}-\widetilde{\bu}_h\|_{0,\O},$$
which holds for $s>1/2$. 

On the other hand, let us consider  the following identity
\begin{equation*}
\|\widetilde{\bu}-\widetilde{\bu}_h\|_{0,\O}^2=\sum_{T\in\CT_h}\int_T\left(\bbK^{-1}\boldsymbol{\varphi} - \nu \Delta \boldsymbol{\varphi} + \nabla \psi\right)\cdot(\widetilde{\bu}-\widetilde{\bu}_h),
\end{equation*}
applying  integration by parts, and using that $(\boldsymbol{\varphi},\psi)\in \boldsymbol{\H}^{1+s}(\Omega)\times\H^s(\Omega)$, we have 
\begin{multline}\label{eq_control}
\|\widetilde{\bu}-\widetilde{\bu}_h\|_{0,\O}^2\\
=\sum_{T\in\CT_h}\left(\int_T\bbK^{-1}\boldsymbol{\varphi}\cdot(\widetilde{\bu}-\widetilde{\bu}_h)+\int_T\nu\nabla\boldsymbol{\varphi}:\nabla(\widetilde{\bu}-\widetilde{\bu}_h)-\int_T\div(\widetilde{\bu}-\widetilde{\bu}_h)\psi\right)\\
+\int_{\cF^*_h}\mean{\psi}\jump{\underline{\widetilde{\bu}-\widetilde{\bu}_h}}-\int_{\cF^*_h}\mean{\nu\nabla_h\boldsymbol{\varphi}}:\jump{\widetilde{\bu}-\widetilde{\bu}_h}.
\end{multline}

Let us denote by $\Xi_h:\L^2(\O)\to\mathcal{Q}_h$  the classic $\L^2$-orthogonal projection and let $\boldsymbol{\Pi}_h\boldsymbol{\varphi}\in\boldsymbol{\H}^{1}(\Omega)$ be the Lagrange interpolator  of $\boldsymbol{\varphi}$. Now, testing Problem \ref{eq:porous_system_continuous_source} and Problem \ref{eq:weak_stokes_system_dg_source} with $(\boldsymbol{\Pi}_h\boldsymbol{\varphi},\Xi_h\psi)\in \boldsymbol{\H}^{1}(\Omega)\times\L^2(\Omega)$ we have the following identity
$$A_h((\widetilde{\bu}-\widetilde{\bu}_h,\widetilde{p}-\widetilde{p}_h),(\boldsymbol{\Pi}_h\boldsymbol{\varphi},\Xi_h\psi))=0.$$
By subtracting the above identity to \eqref{eq_control}, we obtain
\begin{multline}
\|\widetilde{\bu}-\widetilde{\bu}_h\|_{0,\O}^2=\underbrace{\sum_{T\in\CT_h}\int_T\bbK^{-1}(\boldsymbol{\varphi}-\boldsymbol{\Pi}_h\boldsymbol{\varphi})\cdot(\widetilde{\bu}-\widetilde{\bu}_h)}_{T_{1}}\\
+\underbrace{\sum_{T\in\CT_h}\int_T\nu\nabla(\boldsymbol{\varphi}-\boldsymbol{\Pi}_h\boldsymbol{\varphi}):\nabla(\widetilde{\bu}-\widetilde{\bu}_h)}_{T_{2}}
+\underbrace{\sum_{T\in\CT_h}\int_T\div(\widetilde{\bu}-\widetilde{\bu}_h)(\Xi_h\psi-\psi)}_{T_{3}}\\
+\underbrace{\sum_{T\in\CT_h}\int_T(\widetilde{p}-\widetilde{p}_h)\div \boldsymbol{\Pi}_h\boldsymbol{\varphi}}_{T_{4}}+\underbrace{\sum_{F\in\cF^*_h}\int_F\mean{\psi-\Xi_h\psi}\jump{\underline{\widetilde{\bu}-\widetilde{\bu}_h}}}_{T_{5}}\\
+\underbrace{(\varepsilon-1)\sum_{F\in\cF^*_h}\int_F\mean{\nu\nabla_h(\boldsymbol{\Pi}_h\boldsymbol{\varphi}-\boldsymbol{\varphi})}:\jump{\widetilde{\bu}-\widetilde{\bu}_h}}_{T_{6}}.
\end{multline}
The following task is to bound  each of the   terms $T_i$, for $ i=1,2,...,6$. 
First we note that to bound $T_4$ it is necessary that:
$$T_4=\sum_{T\in\CT_h}\int_T(\widetilde{p}-\widetilde{p}_h)\div( \boldsymbol{\Pi}_h\boldsymbol{\varphi}-\boldsymbol{\varphi})=\sum_{T\in\CT_h}\int_T(\widetilde{p}-\Xi_h\widetilde{p})\div( \boldsymbol{\Pi}_h\boldsymbol{\varphi}-\boldsymbol{\varphi}).$$
Using the approximation properties of $I_h$ and the projector $\Xi_h$, together with the regularity of the dual problem and the convergence order of the operators, we obtain that:
$$T_1+T_2+T_3+T_4\leq \max\{\bbK^*,\nu,1,C_{\nu,\bbK}\}h^{2\min\{s,k\}}(|\widetilde{\bu}|_{s,\O}+|\widetilde{p}|_{s,\O})\|\widetilde{\bu}-\widetilde{\bu}_h\|_{0,\O}.$$
To estimate $T_5$, it is necessary to use trace inequality and the properties of approximation 
\begin{multline}
T_5\leq \sum_{F\in\cF^*_h}\|h_F^{1/2}\jump{\psi-\Xi_h\psi}\|_{0,F}\|h_F^{-1/2}\jump{\widetilde{\bu}-\widetilde{\bu}_h}\|_{0,F}\leq h^s|\psi|_{s,\O}\|\widetilde{\bu}-\widetilde{\bu}_h\|_{{\boldsymbol{\boldsymbol{\mathcal{V}}}(h)}}\\
\leq C_{\nu,\bbK} h^{2\min\{s,k\}}\|\widetilde{\bu}-\widetilde{\bu}_h\|_{0,\O}(|\widetilde{\bu}|_{1+s,\O}+|\widetilde{p}|_{s,\O}).
\end{multline}
For $T_6$ we proceed analogously
$$
T_6\leq  C_{\nu,\bbK} |\varepsilon-1|h^{2\min\{s,k\}}\|\widetilde{\bu}-\widetilde{\bu}_h\|_{0,\O}|\widetilde{\bu}|_{1+s,\O}.
$$
Thus, substituting in \eqref{eq_control}, all the estimates obtained for bounding the different $T_i$ with $i = 1, 2, 3, 4, 5,6$ completes the proof.
\end{proof}

The goal now is to establish that the numerical schemes are spurious free. To do this task, first we recall the definition of the resolvent operator of $\bT$ and $\bT_h$ respectively:
\begin{gather*}
	(z\bI-\bT)^{-1}\,:\, \boldsymbol{\mathcal{V}} \to \boldsymbol{\mathcal{V}}\,, \quad z\in\mathbb{C}\setminus \sp(\bT), \\
	(z\bI-\bT_h)^{-1}\,:\, \boldsymbol{\mathcal{V}}_h \to \boldsymbol{\mathcal{V}}_h\,, \quad z\in\mathbb{C}\setminus\sp(\bT_h) .
\end{gather*}

 Let $\mathcal{D}$ denote the unit disk in the complex plane, defined as   $\mathcal{D}:=\{z\in\mathbb{C}\,:\, |z|\leq 1\}$. According to \cite[Lemma 4]{MR4623018}, the resolvent $(z \bI - \bT) \boldsymbol{f}$ is correctly bounded in the $\boldsymbol{\mathcal{V}}(h)$ norm,  in the sense that  exists a constant $C>0$ independent of $h$
such that for all  $z \in\mathcal{D}\setminus \sp(\bT)$  there holds
\[
\norm{(z \bI - \bT) \boldsymbol{f}}_{\boldsymbol{\mathcal{V}}(h)} \geq C|z|\,
\norm{\boldsymbol{f}}_{\boldsymbol{\mathcal{V}}(h)} \quad \forall \boldsymbol{f} \in \boldsymbol{\mathcal{V}}(h).\]
Moreover, on a compact subset $E$ of $\mathcal{D}\setminus \sp(\bT)$, the resolvent is invertible and bounded, i.e.,  for all  $z\in E$, there exists a constant $C>0$ such that 
\begin{equation*}\label{resDisc}
\displaystyle\norm{(z\bI-\bT)^{-1}}_{\mathcal{L}(\boldsymbol{\mathcal{V}}(h),\boldsymbol{\mathcal{V}}(h))}\leq C\qquad\forall z\in E.
\end{equation*}

On the other hand, the discrete resolvent is also bounded for sufficiently small values of  $h$ as stated in \cite[Lemma 5]{MR4623018}. More precisely, if $z \in\mathcal{D}\setminus \sp(\bT)$, there exist $h_0>0$  and $C>0$ independent of $h$ but depending  on $|z|$, such that for all $h\leq h_0$
\[
\norm{(z \bI - \bT_h) \boldsymbol{f}}_{\boldsymbol{\mathcal{V}}(h)} \geq C \, \norm{\boldsymbol{f}}_{\boldsymbol{\mathcal{V}}(h)} \quad \forall \boldsymbol{f}\in \boldsymbol{\mathcal{V}}(h),
\]

As we mention of the continuous resolvent, if $E$ is a compact subset 
of the complex plane such that $E\cap\sp(\bT)=\emptyset$ for $h$
small enough and for all $z\in E$,  there exists a positive constant
$C$ independent of $h$ such that $\norm{(z \bI - \bT_h)^{-1}}_{\mathcal{L}(\boldsymbol{\mathcal{V}}(h),\boldsymbol{\mathcal{V}}(h))}\leq C$
for all $z\in E$.  Hence, with all these ingredients at hand, we conclude that for $h$  small enough.
the numerical schemes are  spurious free.  This is summarized in the following result proved in \cite{MR483400}.
\begin{theorem}\label{free}
Let $E\subset\mathbb{C}$ be a compact subset not intersecting $\sp(\bT)$.
Then, there exists $h_0>0$ such that, if $h\leq h_0$, then $E\cap\sp(\bT_h)=\emptyset.$ 
\end{theorem}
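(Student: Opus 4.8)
The plan is to follow the Descloux--Nassif--Rappaz framework already invoked in this section: I would show that for $h$ small enough the operator $z\bI-\bT_h$ is invertible on $\boldsymbol{\mathcal{V}}_h$ for every $z\in E$, which is exactly the statement $z\notin\sp(\bT_h)$. The two ingredients are the uniform boundedness of the continuous resolvent on $E$ and property P1 (Corollary~\ref{lmm:P1}); the argument also reproduces the uniform discrete resolvent bound stated just above the theorem.

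First I would use the compactness of $E$ together with $E\cap\sp(\bT)=\emptyset$ to fix the uniform bound
\[
M:=\sup_{z\in E}\norm{(z\bI-\bT)^{-1}}_{\mathcal{L}(\boldsymbol{\mathcal{V}}(h),\boldsymbol{\mathcal{V}}(h))}<\infty,
\]
which is finite because $z\mapsto(z\bI-\bT)^{-1}$ is continuous on the resolvent set and $E$ is compact. Equivalently, this yields the lower bound $\norm{(z\bI-\bT)\boldsymbol{g}}_{\boldsymbol{\mathcal{V}}(h)}\geq M^{-1}\norm{\boldsymbol{g}}_{\boldsymbol{\mathcal{V}}(h)}$ for all $\boldsymbol{g}\in\boldsymbol{\mathcal{V}}(h)$ and all $z\in E$.

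Next, since $\boldsymbol{\mathcal{V}}_h\subset\boldsymbol{\mathcal{V}}(h)$, I would apply this lower bound to an arbitrary $\boldsymbol{f}_h\in\boldsymbol{\mathcal{V}}_h$ and split off the discrete operator with the triangle inequality,
\[
\norm{(z\bI-\bT_h)\boldsymbol{f}_h}_{\boldsymbol{\mathcal{V}}(h)}\geq\norm{(z\bI-\bT)\boldsymbol{f}_h}_{\boldsymbol{\mathcal{V}}(h)}-\norm{(\bT-\bT_h)\boldsymbol{f}_h}_{\boldsymbol{\mathcal{V}}(h)}\geq\Bigl(M^{-1}-\norm{\bT-\bT_h}_{\mathcal{L}(\boldsymbol{\mathcal{V}}_h,\boldsymbol{\mathcal{V}}(h))}\Bigr)\norm{\boldsymbol{f}_h}_{\boldsymbol{\mathcal{V}}(h)}.
\]
By Corollary~\ref{lmm:P1} the operator norm $\norm{\bT-\bT_h}_{\mathcal{L}(\boldsymbol{\mathcal{V}}_h,\boldsymbol{\mathcal{V}}(h))}\leq Ch^{\min\{s,k\}}$ tends to $0$, so I would choose $h_0>0$ with $Ch_0^{\min\{s,k\}}\leq(2M)^{-1}$. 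Then for every $h\leq h_0$ and every $z\in E$,
\[
\norm{(z\bI-\bT_h)\boldsymbol{f}_h}_{\boldsymbol{\mathcal{V}}(h)}\geq\frac{1}{2M}\norm{\boldsymbol{f}_h}_{\boldsymbol{\mathcal{V}}(h)}\qquad\forall\,\boldsymbol{f}_h\in\boldsymbol{\mathcal{V}}_h.
\]
This makes $z\bI-\bT_h$ injective, hence, since $\boldsymbol{\mathcal{V}}_h$ is finite dimensional, bijective, so $z\notin\sp(\bT_h)$; as $z\in E$ was arbitrary this gives $E\cap\sp(\bT_h)=\emptyset$.

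The only delicate point is uniformity: the threshold $h_0$ must be independent of $z\in E$. This is guaranteed because both $M$ (finite by compactness of $E$ and continuity of the continuous resolvent) and the constant $C$ in P1 are uniform over $E$ and independent of $h$. I do not expect a genuine obstacle here, as this is the abstract non-compact-operator argument of \cite{MR483400}; the substantive analytic work was already carried out in establishing P1 together with the continuous and discrete resolvent estimates recalled above.
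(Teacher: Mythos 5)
Your overall strategy is exactly the one the paper relies on: the paper gives no self-contained proof of Theorem~\ref{free}, but instead recalls the continuous and discrete resolvent estimates (from \cite[Lemmas 4 and 5]{MR4623018}) and cites \cite{MR483400} for the conclusion. Your triangle-inequality argument combining a lower bound for $(z\bI-\bT)$ with property P1 (Corollary~\ref{lmm:P1}), followed by injectivity-implies-bijectivity on the finite-dimensional space $\boldsymbol{\mathcal{V}}_h$, is precisely the argument packaged in those citations, so in structure you and the paper agree.

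There is, however, one step whose justification as written would fail. You define $M:=\sup_{z\in E}\norm{(z\bI-\bT)^{-1}}_{\mathcal{L}(\boldsymbol{\mathcal{V}}(h),\boldsymbol{\mathcal{V}}(h))}$ and argue it is finite (and later, independent of $h$) because the resolvent is continuous on the resolvent set and $E$ is compact. That argument is valid in a \emph{fixed} Banach space; here the space $\boldsymbol{\mathcal{V}}(h)=\boldsymbol{\mathcal{V}}+\boldsymbol{\mathcal{V}}_h$ and its norm change with $h$, so compactness of $E$ only yields, for each fixed $h$, a finite constant $M_h$, with no control of $M_h$ as $h\to0$. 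Since your choice of $h_0$ requires $Ch_0^{\min\{s,k\}}\leq(2M)^{-1}$, an $h$-dependent $M$ would make the argument circular: $h_0$ depends on $M$, which depends on $h$. The correct way to close this is to invoke the estimate the paper recalls just above the theorem, namely $\norm{(z\bI-\bT)\boldsymbol{f}}_{\boldsymbol{\mathcal{V}}(h)}\geq C|z|\,\norm{\boldsymbol{f}}_{\boldsymbol{\mathcal{V}}(h)}$ for all $\boldsymbol{f}\in\boldsymbol{\mathcal{V}}(h)$ with $C$ independent of $h$ (\cite[Lemma 4]{MR4623018}); combined with $\min_{z\in E}|z|>0$ (which holds because $\bT$ is compact, so $0\in\sp(\bT)$ and hence $0\notin E$), this gives the $h$-uniform lower bound that must replace your $M^{-1}$. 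Since you do gesture at ``the resolvent estimates recalled above,'' the repair is essentially a matter of citing the right fact, but the compactness-plus-continuity justification itself should be removed: it does not prove the uniformity in $h$ on which your choice of $h_0$ depends.
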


 \subsection{A priori error estimates}
First we introduce the definition of the \textit{gap} $\hdel$ between two closed
subspaces $\CM$ and $\CN$ of $\L^2(\O)$:
$$
\hdel(\CM,\CN)
:=\max\big\{\delta(\CM,\CN),\delta(\CN,\CM)\big\},
$$
where
$$
\delta(\CM,\CN)
:=\sup_{x\in\CM:\ \left\|x\right\|_{0,\O}=1}
\left(\inf_{y\in\CN}\left\|x-y\right\|_{0,\O}\right).
$$

Let $\lambda$ be an isolated eigenvalue of $\bT$
and let $D$ be any open disk in the complex plane with boundary $\gamma$
such that $\lambda$ is the only eigenvalue of $\bT$ lying in $D$ and $\gamma\cap\sp(\bT)=\emptyset$.
We introduce the spectral projector corresponding to the continuous
and discrete solution operators $\bT$ and $\bT_h$, respectively
\begin{equation*}
\bcE:=\frac{1}{2\pi i}
\int_{\gamma}\left(z\bI-\bT\right)^{-1}\, dz:\boldsymbol{\mathcal{V}}(h)\longrightarrow \boldsymbol{\mathcal{V}}(h),
\end{equation*}
\begin{equation*}
\bcE_h:=\frac{1}{2\pi i}
\int_{\gamma}\left(z\bI-\bT_h\right)^{-1}\, dz:\boldsymbol{\mathcal{V}}(h)\longrightarrow \boldsymbol{\mathcal{V}}(h).
\end{equation*}

The following approximation result for the spectral projections holds is derived according to \cite[Theorem 5.1]{MR2220929}.
\begin{lemma}
\label{eq:E-E_h}
There holds
\begin{equation*}
	\lim_{h\rightarrow 0}\|\bcE-\bcE_h\|_{\mathcal{L}(\boldsymbol{\mathcal{V}}_h,\boldsymbol{\mathcal{V}}(h))}=0.
\end{equation*}
\end{lemma}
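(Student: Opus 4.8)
The plan is to express the difference of spectral projectors as a contour integral of the difference of resolvents and then bound the integrand uniformly along $\gamma$. Starting from the definitions of $\bcE$ and $\bcE_h$, I would write
\begin{equation*}
\bcE-\bcE_h=\frac{1}{2\pi i}\int_{\gamma}\left[(z\bI-\bT)^{-1}-(z\bI-\bT_h)^{-1}\right]\,dz,
\end{equation*}
and apply the operator norm $\|\cdot\|_{\mathcal{L}(\boldsymbol{\mathcal{V}}_h,\boldsymbol{\mathcal{V}}(h))}$ under the integral sign. Using the standard length-of-contour estimate, the task reduces to bounding $\|(z\bI-\bT)^{-1}-(z\bI-\bT_h)^{-1}\|_{\mathcal{L}(\boldsymbol{\mathcal{V}}_h,\boldsymbol{\mathcal{V}}(h))}$ uniformly for $z\in\gamma$, since $\gamma$ is a fixed compact curve with finite length independent of $h$.

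The key algebraic step is the resolvent identity. For $z\in\gamma$, which satisfies $\gamma\cap\sp(\bT)=\emptyset$ and (for $h$ small) $\gamma\cap\sp(\bT_h)=\emptyset$ by Theorem \ref{free}, I would write
\begin{equation*}
(z\bI-\bT)^{-1}-(z\bI-\bT_h)^{-1}=(z\bI-\bT_h)^{-1}(\bT-\bT_h)(z\bI-\bT)^{-1}.
\end{equation*}
Taking norms and using submultiplicativity, the right-hand side is controlled by the product of the two resolvent bounds and $\|\bT-\bT_h\|_{\mathcal{L}(\boldsymbol{\mathcal{V}}_h,\boldsymbol{\mathcal{V}}(h))}$. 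The two resolvent bounds are exactly the uniform estimates recalled just before the statement: the continuous resolvent is bounded by a constant $C$ on the compact set $\gamma\subset\mathcal{D}\setminus\sp(\bT)$, and the discrete resolvent is bounded by a constant $C$ on $\gamma$ for all $h\leq h_0$ (with $h_0$ chosen so that $\gamma$ avoids $\sp(\bT_h)$). Hence
\begin{equation*}
\|\bcE-\bcE_h\|_{\mathcal{L}(\boldsymbol{\mathcal{V}}_h,\boldsymbol{\mathcal{V}}(h))}\leq \frac{\mathrm{length}(\gamma)}{2\pi}\,C^2\,\|\bT-\bT_h\|_{\mathcal{L}(\boldsymbol{\mathcal{V}}_h,\boldsymbol{\mathcal{V}}(h))}.
\end{equation*}

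The convergence then follows immediately from Corollary \ref{lmm:P1}, which gives $\|\bT-\bT_h\|_{\mathcal{L}(\boldsymbol{\mathcal{V}}_h,\boldsymbol{\mathcal{V}}(h))}\leq C h^{\min\{s,k\}}\to 0$ as $h\to 0$. Letting $h\to 0$ in the displayed inequality yields the claim.

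The main delicate point is the handling of the nonconforming composition of operators, since $\bT$, $\bT_h$, and their resolvents act between the spaces $\boldsymbol{\mathcal{V}}(h)$ and $\boldsymbol{\mathcal{V}}_h$ rather than on a single fixed Hilbert space; one must verify that the resolvent identity and the submultiplicative norm estimate are valid in the $\mathcal{L}(\boldsymbol{\mathcal{V}}_h,\boldsymbol{\mathcal{V}}(h))$ setting, making sure the intermediate resolvent $(z\bI-\bT_h)^{-1}$ is applied on $\boldsymbol{\mathcal{V}}_h$ where its uniform bound holds. This is the reason the two recalled resolvent estimates are stated precisely in the mixed operator norms, and invoking them carefully is what makes the telescoping argument go through. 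Everything else is a routine contour estimate, so I expect the bookkeeping of which space each factor acts on to be the only real obstacle.
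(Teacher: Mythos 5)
Your overall strategy---writing $\bcE-\bcE_h$ as a contour integral of the resolvent difference, invoking the second resolvent identity, the uniform resolvent bounds recalled before the lemma, and Corollary \ref{lmm:P1}---is exactly the standard argument behind the result the paper invokes (the paper gives no proof of its own; it only cites \cite[Theorem 5.1]{MR2220929}). However, as written your key step fails, because you chose the wrong ordering of the resolvent identity. In your formula
\begin{equation*}
(z\bI-\bT)^{-1}-(z\bI-\bT_h)^{-1}=(z\bI-\bT_h)^{-1}(\bT-\bT_h)(z\bI-\bT)^{-1},
\end{equation*}
applied to $\bF_h\in\boldsymbol{\mathcal{V}}_h$, the rightmost factor produces $(z\bI-\bT)^{-1}\bF_h$, which lies in $\boldsymbol{\mathcal{V}}(h)$ but generically \emph{not} in $\boldsymbol{\mathcal{V}}_h$: indeed $(z\bI-\bT)^{-1}\bF_h=z^{-1}\bigl(\bF_h+\bT(z\bI-\bT)^{-1}\bF_h\bigr)$, which has a component in the continuous space $\boldsymbol{\mathcal{V}}$. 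The middle factor $\bT-\bT_h$ is therefore applied to a non-discrete function, while Corollary \ref{lmm:P1} only controls $\|\bT-\bT_h\|_{\mathcal{L}(\boldsymbol{\mathcal{V}}_h,\boldsymbol{\mathcal{V}}(h))}$, i.e.\ $\bT-\bT_h$ acting on \emph{discrete} arguments. So the bound you claim (product of the two resolvent constants times $\|\bT-\bT_h\|_{\mathcal{L}(\boldsymbol{\mathcal{V}}_h,\boldsymbol{\mathcal{V}}(h))}$) does not follow; you would need $\|\bT-\bT_h\|_{\mathcal{L}(\boldsymbol{\mathcal{V}}(h),\boldsymbol{\mathcal{V}}(h))}\to0$, which is a strictly stronger statement than property P1 and is never established in the paper. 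Ironically, your closing paragraph identifies precisely this bookkeeping as the delicate point, yet your formula places the three factors in exactly the order that breaks it: in your expression it is the \emph{continuous} resolvent that acts on $\boldsymbol{\mathcal{V}}_h$ and the \emph{discrete} resolvent that acts on $\boldsymbol{\mathcal{V}}(h)$.

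The repair is one line. Use the other, equally valid, form of the second resolvent identity,
\begin{equation*}
(z\bI-\bT)^{-1}-(z\bI-\bT_h)^{-1}=(z\bI-\bT)^{-1}(\bT-\bT_h)(z\bI-\bT_h)^{-1}.
\end{equation*}
For $\bF_h\in\boldsymbol{\mathcal{V}}_h$ and $z\in\gamma$, the rightmost factor $(z\bI-\bT_h)^{-1}\bF_h$ stays in $\boldsymbol{\mathcal{V}}_h$ (this is how the paper defines the discrete resolvent; its uniform bound on $\gamma$ for $h\leq h_0$ is legitimate by Theorem \ref{free} together with the discrete resolvent estimate in $\mathcal{L}(\boldsymbol{\mathcal{V}}(h),\boldsymbol{\mathcal{V}}(h))$); the middle factor $\bT-\bT_h$ then acts on a discrete function, so Corollary \ref{lmm:P1} applies; and the leftmost factor $(z\bI-\bT)^{-1}$ acts on an element of $\boldsymbol{\mathcal{V}}(h)$, where the continuous resolvent bound in $\mathcal{L}(\boldsymbol{\mathcal{V}}(h),\boldsymbol{\mathcal{V}}(h))$ holds. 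With this ordering, your length-of-contour estimate and the conclusion
\begin{equation*}
\|\bcE-\bcE_h\|_{\mathcal{L}(\boldsymbol{\mathcal{V}}_h,\boldsymbol{\mathcal{V}}(h))}\leq \frac{\mathrm{length}(\gamma)}{2\pi}\,C^2\,h^{\min\{s,k\}}\longrightarrow 0
\end{equation*}
go through verbatim. (Alternatively, your original ordering can be salvaged by splitting $(z\bI-\bT)^{-1}\bF_h$ into its $z^{-1}\bF_h$ part, handled by Corollary \ref{lmm:P1}, and its $\boldsymbol{\mathcal{V}}$-part, handled by Lemma \ref{lmm:TTh}, but the swap is the cleaner and standard route.)
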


We end this section with the a priori error estimates for the eigenfunctions and eigenvalues. These estimates depend on the IPDG under consideration, in the sense that for non-symmetric methods ($\varepsilon\in\{0,-1\}$) the orders of convergence are not optimal, whereas for the symmetric method ($\varepsilon=1$) the order of convergence is quadratic. We begin by recalling \cite[Lemma 7]{MR4623018}, which is straightforward for our eigenvalue problem.

\begin{lemma}
\label{lmm:error_eigenfunctions}
There exists a strictly positive constant $h_0$ such that, for $h<h_0$ there holds
\begin{equation*}
\label{eq:error_eigenfunction}
\widehat{\delta}_h(\bcE(\boldsymbol{\mathcal{V}}),\bcE_h(\boldsymbol{\mathcal{V}}_h))\leq C h^{\min\{r,k\}},
\end{equation*}
where $r>0$ is the same as in \eqref{eq_regularityofev} and  the hidden constant $C>0$ is independent of $h$.
\end{lemma}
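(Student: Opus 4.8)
The plan is to estimate the gap $\widehat{\delta}_h(\bcE(\boldsymbol{\mathcal{V}}),\bcE_h(\boldsymbol{\mathcal{V}}_h))$ by the standard abstract spectral-approximation machinery developed in \cite{MR483400,MR483401} and adapted to the nonconforming DG setting in \cite{MR2220929,MR4623018}. The key observation is that the gap between the continuous invariant subspace $\bcE(\boldsymbol{\mathcal{V}})$ and the discrete one $\bcE_h(\boldsymbol{\mathcal{V}}_h)$ is controlled by the operator-norm convergence of the spectral projectors, which we already have qualitatively from Lemma \ref{eq:E-E_h}. What is needed here is a \emph{quantitative} rate, and this is obtained by bounding $\|(\bcE-\bcE_h)\|$ restricted to the finite-dimensional invariant subspace in terms of $\|\bT-\bT_h\|_{\mathcal{L}(\boldsymbol{\mathcal{V}}_h,\boldsymbol{\mathcal{V}}(h))}$ via the Dunford integral representation of the projectors.

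Concretely, the first step is to recall the integral representation
\begin{equation*}
\bcE-\bcE_h=\frac{1}{2\pi i}\int_{\gamma}\left[(z\bI-\bT)^{-1}-(z\bI-\bT_h)^{-1}\right]dz,
\end{equation*}
and to use the resolvent identity $(z\bI-\bT)^{-1}-(z\bI-\bT_h)^{-1}=(z\bI-\bT_h)^{-1}(\bT-\bT_h)(z\bI-\bT)^{-1}$. Applying this to an element of $\bcE(\boldsymbol{\mathcal{V}})$ and using the uniform boundedness of both resolvents on the contour $\gamma$ (which is guaranteed, since $\gamma$ is a compact subset of $\mathbb{C}$ not meeting $\sp(\bT)$, by the resolvent estimates recalled just before Theorem \ref{free}), the gap is bounded by $C\|\bT-\bT_h\|_{\mathcal{L}(\boldsymbol{\mathcal{V}}_h,\boldsymbol{\mathcal{V}}(h))}$ times the length of $\gamma$. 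The second step is to invoke Corollary \ref{lmm:P1}, which gives $\|\bT-\bT_h\|_{\mathcal{L}(\boldsymbol{\mathcal{V}}_h,\boldsymbol{\mathcal{V}}(h))}\leq C h^{\min\{s,k\}}$ for the solution operators. The final step is to replace the regularity exponent $s$ of the source problem by the exponent $r$ coming from the regularity of the eigenfunctions in \eqref{eq_regularityofev}, since the relevant functions here lie in the eigenspace $\bcE(\boldsymbol{\mathcal{V}})$ rather than being images of arbitrary $\boldsymbol{\L}^2$ data; this yields the stated bound $C h^{\min\{r,k\}}$.

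I expect the main obstacle to be the careful bookkeeping of which norm and which convergence exponent applies where, rather than any deep analytic difficulty. In particular, one must ensure that the resolvents $(z\bI-\bT_h)^{-1}$ are uniformly bounded on $\gamma$ for all sufficiently small $h$ (this is exactly the statement recalled before Theorem \ref{free}), so that the threshold $h_0$ in the statement is precisely the one below which this uniform bound holds. The other delicate point is the regularity substitution: the gap is measured on the eigenspace, and the eigenfunctions enjoy the shift regularity \eqref{eq_regularityofev} with exponent $r$, so the convergence rate of $\bT-\bT_h$ restricted to the eigenspace improves to $h^{\min\{r,k\}}$; this must be argued by applying Lemma \ref{lmm:TTh} with $\boldsymbol{f}$ chosen in the eigenspace and invoking the eigenfunction regularity in place of the generic source regularity. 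Since the statement is explicitly taken from \cite[Lemma 7]{MR4623018} and declared ``straightforward for our eigenvalue problem,'' the proof should amount to verifying that each hypothesis of that abstract lemma is met in the present Stokes–Brinkman context, the only new ingredient relative to pure Stokes being the $a^{\mathbb{K}}(\cdot,\cdot)$ term, which is continuous and contributes no loss of regularity.
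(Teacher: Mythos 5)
Your overall route---the Dunford integral, the resolvent identity $(z\bI-\bT)^{-1}-(z\bI-\bT_h)^{-1}=(z\bI-\bT_h)^{-1}(\bT-\bT_h)(z\bI-\bT)^{-1}$, uniform resolvent bounds on $\gamma$, and the upgrade from the source regularity $s$ to the eigenfunction regularity $r$ by restricting $\bT-\bT_h$ to the eigenspace---is exactly the mechanism behind the paper's proof, which defers to \cite[Lemma 7]{MR4623018} and whose essential ingredient (the consistency-based C\'ea estimate combined with \eqref{eq_regularityofev}) is spelled out in Remark \ref{rmrk:11}. One internal inconsistency should be removed: your intermediate step, bounding the gap by $\norm{\bT-\bT_h}_{\mathcal{L}(\boldsymbol{\mathcal{V}}_h,\boldsymbol{\mathcal{V}}(h))}$ via Corollary \ref{lmm:P1}, does not apply, because when the resolvent identity acts on $\bx\in\bcE(\boldsymbol{\mathcal{V}})$ the element $(z\bI-\bT)^{-1}\bx$ lies in $\bcE(\boldsymbol{\mathcal{V}})$, not in $\boldsymbol{\mathcal{V}}_h$, so that restricted operator norm never sees it (and would in any case only produce the rate $h^{\min\{s,k\}}$). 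Your final paragraph effectively supersedes this step with the correct one---apply $\bT-\bT_h$ to eigenspace elements and rerun the C\'ea/interpolation argument of Lemma \ref{lmm:TTh} using \eqref{eq_regularityofev}---so this is a slip in bookkeeping rather than a wrong idea, but the clean proof should bypass Corollary \ref{lmm:P1} entirely.

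The genuine gap is that you only estimate the one-sided quantity $\delta(\bcE(\boldsymbol{\mathcal{V}}),\bcE_h(\boldsymbol{\mathcal{V}}_h))$: the difference $\bcE-\bcE_h$ is applied exclusively to elements of $\bcE(\boldsymbol{\mathcal{V}})$, whereas the lemma concerns the symmetric gap $\widehat{\delta}_h$, which also requires control of $\delta(\bcE_h(\boldsymbol{\mathcal{V}}_h),\bcE(\boldsymbol{\mathcal{V}}))$. Repeating your resolvent computation on elements of $\bcE_h(\boldsymbol{\mathcal{V}}_h)$ would place \emph{discrete} functions to the right of $\bT-\bT_h$ and therefore only yield $h^{\min\{s,k\}}$, which is in general weaker than the claimed $h^{\min\{r,k\}}$, since eigenfunctions are typically more regular than solutions of the source problem with generic $\boldsymbol{\L}^2$ data. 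The standard repair---and part of the reason the statement carries a threshold $h_0$---is to observe that the projector convergence (Lemma \ref{eq:E-E_h}, or your one-sided estimate itself) forces $\dim\bcE_h(\boldsymbol{\mathcal{V}}_h)=\dim\bcE(\boldsymbol{\mathcal{V}})$ for $h$ small enough, after which the classical lemma on gaps between finite-dimensional subspaces of equal dimension gives $\delta(\bcE_h(\boldsymbol{\mathcal{V}}_h),\bcE(\boldsymbol{\mathcal{V}}))\leq \delta(\bcE(\boldsymbol{\mathcal{V}}),\bcE_h(\boldsymbol{\mathcal{V}}_h))\big/\bigl(1-\delta(\bcE(\boldsymbol{\mathcal{V}}),\bcE_h(\boldsymbol{\mathcal{V}}_h))\bigr)$, so the reverse gap inherits the same rate once the forward gap is below, say, $1/2$. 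Adding this dimension-equality step closes the argument.
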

\begin{remark}
\label{rmrk:11}
It is important to remark that this lemma, rigorously speaking, the proof of this result lies in the fact that the IPDG methods are consistent in the sense that for all $(\bv_h,q_h)\in\boldsymbol{\mathcal{V}}_h\times\mathcal{Q}_h$, the following identity holds
$$A_h((\bu-\bu_h,p-p_h),(\bv_h,q_h))=0,$$
with $(\bu,p)\in  \boldsymbol{\H}^{1+r}(\O)\times \H^r(\O)$ and $r>0$ and hence, the following C\'ea estimate for the eigenfunctions holds
\begin{equation*}
\|(\bu-\bu_h,p-p_h)\|_{\boldsymbol{\mathcal{V}}(h)\times\mathcal{Q}_h}\leq C\left(1+\frac{C_*}{\beta}\right)\inf_{(\bv_h,q_h)\in\boldsymbol{\mathcal{V}}_h\times\mathcal{Q}_h}\|(\bu,p)-(\bv_h,q_h)\|_{\boldsymbol{\mathcal{V}}(h)\times\mathcal{Q}_h},
\end{equation*}
where $C_*$ and $\beta$ are the continuity constant of $a_h(\cdot,\cdot)$ and the inf-sup constant of $b_h(\cdot,\cdot)$, respectively. Now, applying any suitable interpolant for $\bu$ on $\boldsymbol{\mathcal{V}}_h$ and the $\L^2$ orthogonal projection operator, together with the fact that $\bu\in\bcE(\boldsymbol{\mathcal{V}})\subset \boldsymbol{\H}^{1+r}(\O)$ with $r>0$, we have
\begin{equation*}
\|(\bu-\bu_h,p-p_h)\|_{\boldsymbol{\mathcal{V}}(h)\times\mathcal{Q}_h} \leq Ch^{\min\{r,k\}}(\|\bu\|_{1+r}+\|p\|_{r,\O}),
\end{equation*}
where $C$ is a constant depending on the physical constants and the corresponding eigenvalue.
\end{remark}

\begin{theorem}\label{theorem_ordendoble}
	There exists a strictly positive constant $h_0$ such that, for $h<h_0$ there holds
	\begin{enumerate}
	\item If the symmetric IPDG method is considered $(\varepsilon=1)$, then there holds
	\begin{equation}
	\label{eq:double_order}
		|\lambda-\lambda_h|\leq C h^{2\min\{r,k\}},
	\end{equation}
	\item If any of the non-symmetric IPDG methods are  considered $(\varepsilon\in\{-1,0\})$, then there holds
		\begin{equation}
		\label{eq:simple_order}
		|\lambda-\lambda_h|\leq C  h^{\min\{r,k\}},
	\end{equation}
	\end{enumerate}
where $r > 0$ and $C$ is a constant depending on the physical constants and the corresponding eigenvalue given in \eqref{eq_regularityofev}.
\end{theorem}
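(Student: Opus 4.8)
The plan is to reduce the eigenvalue error to a single algebraic identity and then read off the two convergence rates according to whether the non-symmetric penalty term survives. First I would fix a continuous eigenpair $(\lambda,(\bu,p))$ normalized so that $\|\bu\|_{0,\O}=1$, and, invoking Lemma \ref{eq:E-E_h} together with Lemma \ref{lmm:error_eigenfunctions}, select a discrete eigenpair $(\lambda_h,(\bu_h,p_h))$ in the associated cluster for which $\|\bu-\bu_h\|_{\boldsymbol{\mathcal{V}}(h)}\to 0$ as $h\to 0$; for $h$ small this guarantees $\|\bu_h\|_{0,\O}^2\geq 1/2$, so that division by $\|\bu_h\|_{0,\O}^2$ at the end is harmless (the case of higher multiplicity is handled verbatim through the spectral projectors $\bcE,\bcE_h$).

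The core of the argument is the derivation of a master identity. Since $\bu\in\boldsymbol{\H}^{1+r}(\O)$, with $r>0$ as in \eqref{eq_regularityofev}, solves the strong form \eqref{eq:Brinkman1}--\eqref{bc:Gamma}, the consistency of the IPDG scheme gives $A_h((\bu,p),(\bv_h,q_h))=\lambda(\bu,\bv_h)_{0,\O}$ for every $(\bv_h,q_h)\in\boldsymbol{\mathcal{V}}_h\times\mathcal{Q}_h$; moreover, because every jump of the continuous field $\bu$ vanishes, all interface contributions disappear and $A_h((\bu,p),(\bu,p))=A((\bu,p),(\bu,p))=\lambda\|\bu\|_{0,\O}^2$. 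Expanding $A_h((\bu-\bu_h,p-p_h),(\bu-\bu_h,p-p_h))$, the only term not immediately covered by these relations or by the discrete equation $A_h((\bu_h,p_h),(\bu_h,p_h))=\lambda_h\|\bu_h\|_{0,\O}^2$ is $A_h((\bu_h,p_h),(\bu,p))$. Here I would exploit that the $b_h$-contributions cancel and that $a^{\mathbb{K}}$ is symmetric, so that only the consistency/adjoint-consistency mismatch in $a_h^{\nabla}$ remains; using $\jump{\bu}=0$ a direct computation yields the asymmetry
\[
A_h((\bu_h,p_h),(\bu,p))-A_h((\bu,p),(\bu_h,p_h))=(1-\varepsilon)\int_{\cF^*_h}\mean{\nu\nabla\bu}:\jump{\bu_h-\bu}.
\]
Substituting and simplifying (with $\jump{\bu_h}=\jump{\bu_h-\bu}$) leads to
\[
(\lambda_h-\lambda)\|\bu_h\|_{0,\O}^2=A_h((\bu-\bu_h,p-p_h),(\bu-\bu_h,p-p_h))-\lambda\|\bu-\bu_h\|_{0,\O}^2+(1-\varepsilon)\int_{\cF^*_h}\mean{\nu\nabla\bu}:\jump{\bu_h-\bu}.
\]

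It then remains to estimate the right-hand side. By the boundedness of $A_h(\cdot,\cdot)$ (cf.\ \eqref{eq:bound_ah}) together with the eigenfunction estimate of Remark \ref{rmrk:11}, the first term is $O(h^{2\min\{r,k\}})$, and since $\|\bu-\bu_h\|_{0,\O}\leq\|\bu-\bu_h\|_{\boldsymbol{\mathcal{V}}(h)}$ the second term is likewise $O(h^{2\min\{r,k\}})$. For the symmetric method $(\varepsilon=1)$ the last term vanishes identically, so dividing by $\|\bu_h\|_{0,\O}^2\geq 1/2$ gives the quadratic estimate \eqref{eq:double_order}. For the non-symmetric methods $(\varepsilon\in\{-1,0\})$ I would bound the surviving face term by Cauchy--Schwarz and a trace estimate for $\bu\in\boldsymbol{\H}^{1+r}(\O)$,
\[
\Big|(1-\varepsilon)\int_{\cF^*_h}\mean{\nu\nabla\bu}:\jump{\bu_h-\bu}\Big|\leq C\,\|h_{\cF}^{1/2}\mean{\nu\nabla\bu}\|_{0,\cF^*_h}\,\|h_{\cF}^{-1/2}\jump{\bu_h-\bu}\|_{0,\cF^*_h}\leq C\,\|\bu\|_{1+r,\O}\,h^{\min\{r,k\}},
\]
so that this linear contribution dominates the quadratic ones and produces \eqref{eq:simple_order}.

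The main obstacle is the careful derivation of the master identity, and in particular justifying the consistency relation $A_h((\bu,p),(\bv_h,q_h))=\lambda(\bu,\bv_h)_{0,\O}$ for the \emph{exact} eigenfunction: this requires enough regularity ($r>1/2$) for the traces of $\nu\nabla\bu$ on the skeleton to be well defined, so that the integration by parts generating the interface terms is licit and the factor $\|h_{\cF}^{1/2}\mean{\nu\nabla\bu}\|_{0,\cF^*_h}$ is controlled by $\|\bu\|_{1+r,\O}$. Once the identity is established, the dichotomy between quadratic and linear convergence is an immediate consequence of whether the single non-symmetric face term $(1-\varepsilon)\int_{\cF^*_h}\mean{\nu\nabla\bu}:\jump{\bu_h-\bu}$ is present, which is precisely governed by the choice of $\varepsilon$.
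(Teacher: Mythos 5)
Your proof is correct, and for the symmetric case it coincides with the paper's argument: the paper also starts from the algebraic identity \eqref{eq:padra}, bounds $A_h$ on the error by continuity, and invokes the C\'ea-type estimate of Remark \ref{rmrk:11} to square the rate. Where you genuinely diverge is the non-symmetric case and the derivation of the identity itself. The paper treats \eqref{eq:simple_order} in one line, as an immediate consequence of the spectral-projection gap estimate of Lemma \ref{lmm:error_eigenfunctions}, and it states \eqref{eq:padra} as a ``well known algebraic identity'' without qualification; you instead derive a unified master identity valid for all $\varepsilon\in\{-1,0,1\}$, in which the adjoint-consistency defect
$(1-\varepsilon)\int_{\cF^*_h}\mean{\nu\nabla_h\bu}:\jump{\bu_h-\bu}$
appears explicitly, and then read off both rates from whether this term survives. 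Your computation of that defect is right (the $b_h$ and $a^{\mathbb{K}}$ contributions cancel, and $\jump{\bu}=\boldsymbol{0}$ kills the penalty and one of the consistency terms), and your trace bound $\|h_{\cF}^{1/2}\mean{\nu\nabla_h\bu}\|_{0,\cF^*_h}\le C\|\bu\|_{1+r,\O}$ paired with $\|h_{\cF}^{-1/2}\jump{\bu-\bu_h}\|_{0,\cF^*_h}\le Ch^{\min\{r,k\}}$ gives exactly \eqref{eq:simple_order}. What your route buys is an explanation the paper does not provide: it exhibits \eqref{eq:padra} as precisely the $\varepsilon=1$ instance of the general identity (the identity as written in the paper is \emph{only} valid for the symmetric method), and it pinpoints the adjoint-consistency failure as the sole source of the order loss for NIP/IIP. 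What the paper's route buys is brevity and weaker hypotheses in the non-symmetric case, since the gap lemma does not require manipulating traces of $\nabla\bu$ on the skeleton. Finally, your explicit caveat that consistency for the exact eigenfunction needs $r>1/2$ is well taken; note that the paper's own proof has the same hidden requirement, since Remark \ref{rmrk:11} asserts $A_h((\bu-\bu_h,p-p_h),(\bv_h,q_h))=0$ for $r>0$ even though the face integrals of $\mean{\nu\nabla_h\bu}$ are only guaranteed to make sense elementwise when $r>1/2$ (or via a lifting-operator formulation), so this is not a gap relative to the paper.
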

\begin{proof}
We begin by noticing that  \eqref{eq:simple_order} is an immediate consequence of Lemma \ref{lmm:error_eigenfunctions}. The estimate \eqref{eq:double_order} follows from the well known algebraic   identity
\begin{equation}
\label{eq:padra}
A_h((\bu-\bu_h,p-p_h),(\bu-\bu_h,p-p_h))-\lambda(\bu-\bu_h,\bu-\bu_h)_{0,\O}=(\lambda_{h}-\lambda)(\bu_h,\bu_h)_{0,\O}.
\end{equation}

It is straightforward to prove that  there exists a positive constant  $\widetilde{C}$ such that  $(\bu_h,\bu_h)_{0,\O}>\widetilde{C}>0$. On the other hand, applying modulus on \eqref{eq:padra} we obtain
\begin{multline*}
\widetilde{C}|\lambda_{h}-\lambda|\leq |A_h((\bu-\bu_h,p-p_h),(\bu-\bu_h,p-p_h))|+\lambda|(\bu-\bu_h,\bu-\bu_h)_{0,\O}|\\
=|a_h(\bu-\bu_h,\bu-\bu_h)+2b_h(\bu-\bu_h,p-p_h)|+|\lambda||(\bu-\bu_h,\bu-\bu_h)_{0,\O}|\\
\leq |a_h(\bu-\bu_h,\bu-\bu_h)|+2|b_h(\bu-\bu_h,p-p_h)|+|\lambda||(\bu-\bu_h,\bu-\bu_h)_{0,\O}|\\
\leq C^*\|\bu-\bu_h\|_{\boldsymbol{\mathcal{V}}(h)}^2+2\|\bu-\bu_h\|_{\boldsymbol{\mathcal{V}}(h)}\|p-p_h\|_{0,\O}+C\|\bu-\bu_h\|_{0,\O}^2,\\
\leq \max\{C^*,C,1\}(\|\bu-\bu_h\|_{\boldsymbol{\mathcal{V}}(h)}^2+\|p-p_h\|_{0,\O}^2),
\end{multline*}
where the constant $C^*>0$ is the one involved in \eqref{eq:bound_ah}. Finally, the proof follows from Remark \ref{rmrk:11}.
\end{proof}

\section{A posteriori error analysis}
\label{sec:apost}
The aim of this section is to introduce a suitable   fully computable residual-based error in the sense that it depends only on quantities available from the DG solution. Then, we will show its equivalence with the error. The analysis is focused only on eigenvalues with simple multiplicity.

For $T\in\CT_h$, we introduce the local indicator $\boldsymbol{\eta}_T$ as follows
\begin{multline*}
\boldsymbol{\eta}_T^2:= h_T^2\|\lambda_h\bu_h+\nu\Delta\bu_h-\mathbb{K}^{-1}\bu_h-\nabla p_h\|_{0,T}^2+\|\div\bu_h\|_{0,T}^2\\
+\frac{h_F}{2}\sum_{F\in\cF_h^0}\|\jump{\nu\nabla\bu_h-p_h\mathbb{I}}\boldsymbol{n}\|_{0,F}^2 +\frac{h_F}{2}\sum_{F\in\cF_h^2}\Vert(\nu\nabla\bu_h-p_h\mathbb{I})\boldsymbol{n}\|_{0,F}^2\\
+\frac{h_F^{-1}}{2}\sum_{F\in\cF_h^0}\|\nu\jump{\bu_h}\|_{0,F}^2 +\frac{h_F^{-1}}{2}\sum_{F\in\cF_h^1}\|\nu\bu_h \otimes\bn\|_{0,F}^2.
\end{multline*}
We introduce the global a posteriori error estimator
\begin{equation*}\label{estimadorglobal}
\boldsymbol{\eta}=\left(\sum_{T\in\CT_h}\boldsymbol{\eta}_{T}^2\right)^{1/2}
\end{equation*}

In what follows, let $(\lambda,(\bu,p))$ be a solution to Problem \ref{prob:continuous}. We assume, for simplicity, that $\lambda$ is a simple eigenvalue. Let us consider  that $\|\bu\|_{0,\O}=1$. Then, for each $\CT_h$, there exists a solution $(\lambda_h,(\bu_h,p_h))$ of problem \eqref{eq:weak_stokes_system_dg} such that $\lambda_h \to \lambda$, $\|\bu_h\|_{0,\O}=1$ and $\|\bu-\bu_h\|_{\boldsymbol{\mathcal{V}}(h)}\to 0$ as $h\to 0$.

\subsection{Reliability}\label{sec_confiabilidad}
Following the approach presented in \cite{Paul2005347}, we decompose the space of discontinuous finite elements by defining $\boldsymbol{\mathcal{V}}_h^c:=\boldsymbol{\mathcal{V}}_h\cap \boldsymbol{\H}_{\Gamma_1}(\Omega)$. The orthogonal complement of 
$\boldsymbol{\mathcal{V}}_h^c$  in $\boldsymbol{\mathcal{V}}_h$ with respect to the norm $\|\cdot\|_{1,h}$   is denoted by $\boldsymbol{\mathcal{V}}_h^r$, where the norm   is defined as:
\begin{equation*}
\|\bv_h\|_{1,h}:=\|\nabla_h\bv_h\|_{0,\O}^2+\|h_{\cF}^{-1/2}\jump{\bv_h}\|_{0,\mathcal{F}_h}^2.
\end{equation*}
Then, we have the decomposition $\boldsymbol{\mathcal{V}}_h=\boldsymbol{\mathcal{V}}_h^c\oplus\boldsymbol{\mathcal{V}}_h^r$, allowing us to uniquely decompose the DG velocity approximation into $\bv_h=\bv_h^c+\bv_h^r$, where $\bv_h^c\in \boldsymbol{\mathcal{V}}_h^c$ and $\bv_h^r\in \boldsymbol{\mathcal{V}}_h^r$.
The following auxiliary result is necessary to prove  that the presented estimator is reliable and has to do with the Scott-Zhang quasi-interpolator operator (see \cite{MR1011446}).
\begin{lemma}\label{lemmaSZ}
Let $\boldsymbol{\mathcal{I}}_h:\boldsymbol{\H}^1(\Omega)\to\boldsymbol{\mathcal{V}}_h^c$ be the  Scott-Zhang quasi-interpolator operator. Then, there exists a constant $C_{SZ}>0$ independent of $h$ such that
\begin{equation*}
\sum_{T\in\CT_h}\left(h_{T}^{-2}\|\bv-\boldsymbol{\mathcal{I}}_h\bv\|_{0,T}^2+\|\nabla(\bv-\boldsymbol{\mathcal{I}}_h\bv)\|_{0,T}^2+h_{T}^{-1}\|\bv-\boldsymbol{\mathcal{I}}_h\bv\|_{0,\partial T}\right)\leq C_{SZ}\|\nabla\bv\|_{0,\O}^2.
\end{equation*}
\end{lemma}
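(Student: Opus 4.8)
The plan is to reduce the global estimate to the elementwise approximation and stability properties of the Scott--Zhang operator and then assemble them through a finite-overlap argument. First I would recall from \cite{MR1011446} that, for each $T\in\CT_h$, the operator $\boldsymbol{\mathcal{I}}_h$ enjoys the local bounds
$$
\|\bv-\boldsymbol{\mathcal{I}}_h\bv\|_{0,T}\leq C\,h_T\,|\bv|_{1,\omega_T},\qquad
\|\nabla(\bv-\boldsymbol{\mathcal{I}}_h\bv)\|_{0,T}\leq C\,|\bv|_{1,\omega_T},
$$
where $\omega_T$ denotes the patch of elements sharing at least a vertex with $T$ and $C$ depends only on the shape-regularity of $\CT_h$ and the polynomial degree $k$. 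These are exactly the properties that make $\boldsymbol{\mathcal{I}}_h$ applicable to merely $\boldsymbol{\H}^1$ data while preserving the essential boundary condition on $\Gamma_1$, so that indeed $\boldsymbol{\mathcal{I}}_h\bv\in\boldsymbol{\mathcal{V}}_h^c$.

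The first two terms of the sum are then immediate: rearranging the local bounds gives $h_T^{-2}\|\bv-\boldsymbol{\mathcal{I}}_h\bv\|_{0,T}^2\leq C\,|\bv|_{1,\omega_T}^2$ and $\|\nabla(\bv-\boldsymbol{\mathcal{I}}_h\bv)\|_{0,T}^2\leq C\,|\bv|_{1,\omega_T}^2$. For the boundary contribution I would invoke the scaled (multiplicative) trace inequality, valid for any $w\in\boldsymbol{\H}^1(T)$,
$$
\|w\|_{0,\partial T}^2\leq C\bigl(h_T^{-1}\|w\|_{0,T}^2+h_T\,|w|_{1,T}^2\bigr),
$$
and apply it with $w=\bv-\boldsymbol{\mathcal{I}}_h\bv$. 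Substituting the two local estimates above yields $\|\bv-\boldsymbol{\mathcal{I}}_h\bv\|_{0,\partial T}^2\leq C\,h_T\,|\bv|_{1,\omega_T}^2$, whence $h_T^{-1}\|\bv-\boldsymbol{\mathcal{I}}_h\bv\|_{0,\partial T}^2\leq C\,|\bv|_{1,\omega_T}^2$, which is the natural scaling of the remaining term.

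Finally I would sum over all $T\in\CT_h$. Each term on the left is now dominated by $C\,|\bv|_{1,\omega_T}^2$, so it only remains to control $\sum_{T}|\bv|_{1,\omega_T}^2$. Shape-regularity guarantees that the patches $\{\omega_T\}$ have finite overlap, i.e.\ every element of $\CT_h$ belongs to at most a fixed number $N$ of patches, with $N$ independent of $h$; consequently
$$
\sum_{T\in\CT_h}|\bv|_{1,\omega_T}^2\leq N\,|\bv|_{1,\O}^2=N\,\|\nabla\bv\|_{0,\O}^2.
$$
Collecting the three contributions yields the assertion with $C_{SZ}$ depending only on the shape-regularity constant, the degree $k$, and the trace constant.

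The only genuinely non-routine ingredient is this finite-overlap bookkeeping: it is what converts the local, patch-based bounds into a global estimate whose right-hand side is controlled solely by the global $\boldsymbol{\H}^1$ seminorm of $\bv$, and it is precisely where the shape-regularity assumption on $\CT_h$ enters. Everything else is a direct transcription of the classical Scott--Zhang estimates together with a standard scaled trace inequality.
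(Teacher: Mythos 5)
Your proof is correct, and it is the standard argument: local Scott--Zhang approximation and stability bounds on vertex patches, the scaled multiplicative trace inequality, and a finite-overlap summation. The paper itself offers no proof of this lemma --- it simply cites the Scott--Zhang paper \cite{MR1011446} --- so your write-up supplies exactly the argument that citation stands for; there is nothing methodologically different to compare. One small point worth flagging: the paper's statement writes the boundary contribution as $h_T^{-1}\|\bv-\boldsymbol{\mathcal{I}}_h\bv\|_{0,\partial T}$ without the square, which is dimensionally inconsistent with the other two terms; the version you prove, $h_T^{-1}\|\bv-\boldsymbol{\mathcal{I}}_h\bv\|_{0,\partial T}^2\leq C|\bv|_{1,\omega_T}^2$, is the correct reading and is the estimate actually needed later in the reliability proof of Theorem \ref{eficiencia}, so the paper's statement contains a typo rather than your argument a gap.
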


For what follows, it is necessary to obtain an upper bound for $\|\bv_h^r\|_{\boldsymbol{\mathcal{V}(h)}}$. This is achieved using the Poincaré inequality (see \cite[Corollary 5.4]{MR2882148}), the estimate presented in \cite[Proposition 4.1 ]{Paul2005347}, and the definition of the local estimator $\boldsymbol{\eta}_T$ which yield to 
\begin{equation}\label{eq_cotasup}
\|\bv_h^r\|_{\boldsymbol{\mathcal{V}(h)}}\leq C_p\nu^{-1}\left(\sum_{T\in\CT_h}\boldsymbol{\eta}_T^2\right)^{1/2},
\end{equation}
where $C_p>0$ is the  Poincar\'e constant.

The following result constitutes the main result on the efficiency of our estimator.
\begin{theorem}\label{eficiencia}
Let $(\lambda,(\bu,p))\in \mathbb{R}\times \boldsymbol{\H}_{\Gamma_1}(\Omega)\times \mathrm{L}^2(\Omega)$ be a solution of Problem \ref{prob:continuous} and $(\lambda_h,(\bu_h,p_h))\in \mathbb{C}\times \boldsymbol{\boldsymbol{\mathcal{V}}}_h\times\mathcal{Q}_h$ its mixed DG solution of problem \eqref{eq:weak_stokes_system_dg} with $\|\bu_h\|_{0,\O}=1$. Then, there exists a constant $C>0$ independent of $h$ such that
\begin{equation*}
\vertiii{(\bu-\bu_h,p-p_h)}_h\leq C\left(\boldsymbol{\eta}+|\lambda|\|\bu-\bu_h\|_{0,\Omega}+|\lambda-\lambda_h|\right),
\end{equation*}  
where $\vertiii{(\bv,q)}_h^2=\|\bv\|_{\boldsymbol{\mathcal{V}}(h)}^2+\|q\|_{0,\O}.$
\end{theorem}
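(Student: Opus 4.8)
The plan is to follow the conforming/nonconforming splitting strategy of \cite{Paul2005347}. First I would decompose the discrete velocity as $\bu_h=\bu_h^c+\bu_h^r$ with $\bu_h^c\in\boldsymbol{\mathcal{V}}_h^c$ and $\bu_h^r\in\boldsymbol{\mathcal{V}}_h^r$, and use the triangle inequality to write $\vertiii{(\bu-\bu_h,p-p_h)}_h\leq \vertiii{(\bu-\bu_h^c,p-p_h)}_h+\|\bu_h^r\|_{\boldsymbol{\mathcal{V}}(h)}$. The nonconforming remainder is controlled immediately by the estimator through \eqref{eq_cotasup}, namely $\|\bu_h^r\|_{\boldsymbol{\mathcal{V}}(h)}\leq C_p\nu^{-1}\boldsymbol{\eta}$; the interior and Dirichlet jump contributions of $\boldsymbol{\eta}_T$ are exactly the nonconformity measured by this term. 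It then remains to estimate the conforming error $(\bu-\bu_h^c,p-p_h)$, which belongs to $\boldsymbol{\H}_{\Gamma_1}(\Omega)\times\L^2(\Omega)$, so that its jumps vanish and its $\vertiii{\cdot}_h$-norm coincides with the continuous triple norm $\vertiii{\cdot}$ of Lemma \ref{lemma:elliptic}.

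For the conforming part I would invoke the general inf-sup condition of Lemma \ref{lemma:elliptic}: there exists $(\bv,q)$ with $\vertiii{(\bv,q)}\leq C_1\vertiii{(\bu-\bu_h^c,p-p_h)}$ such that $C_2\vertiii{(\bu-\bu_h^c,p-p_h)}^2\leq A((\bu-\bu_h^c,p-p_h),(\bv,q))$. Writing $\bu_h^c=\bu_h-\bu_h^r$ and using the continuous eigenvalue equation $A((\bu,p),(\bv,q))=\lambda(\bu,\bv)_{0,\O}$, the right-hand side becomes $\lambda(\bu,\bv)_{0,\O}-A((\bu_h,p_h),(\bv,q))+A((\bu_h^r,\boldsymbol{0}),(\bv,q))$, where the broken gradient is understood in the last term. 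That term is bounded by $C\|\bu_h^r\|_{\boldsymbol{\mathcal{V}}(h)}\vertiii{(\bv,q)}\leq C\boldsymbol{\eta}\,\vertiii{(\bv,q)}$ using \eqref{eq_cotasup} and the continuity of the forms.

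The heart of the argument is the treatment of $\lambda(\bu,\bv)_{0,\O}-A((\bu_h,p_h),(\bv,q))$. I would integrate by parts element by element, which produces the volumetric residual $\lambda_h\bu_h+\nu\Delta\bu_h-\mathbb{K}^{-1}\bu_h-\nabla p_h$ on each $T$ (after the algebraic splitting $\lambda\bu=\lambda_h\bu_h+(\lambda-\lambda_h)\bu_h+\lambda(\bu-\bu_h)$, which is precisely what generates the higher-order terms $|\lambda-\lambda_h|$ and $|\lambda|\,\|\bu-\bu_h\|_{0,\O}$), together with the flux jumps $\jump{(\nu\nabla\bu_h-p_h\mathbb{I})\bn}$ on $\cF_h^0$, the boundary fluxes on $\cF_h^2$, and the divergence term $\div\bu_h$; the contributions on $\cF_h^1$ vanish since $\bv\in\boldsymbol{\H}_{\Gamma_1}(\Omega)$, and on interior faces the $\jump{\bv}$ terms drop because $\bv$ is conforming. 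To recover the mesh weights $h_T$ and $h_F^{1/2}$ appearing in $\boldsymbol{\eta}_T$, I would use the consistency identity of the IPDG scheme to replace $\bv$ by $\bv-\boldsymbol{\mathcal{I}}_h\bv$ and $q$ by $q-\Xi_h q$, and then apply Cauchy-Schwarz together with the approximation bounds of Lemma \ref{lemmaSZ}.

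The main obstacle I expect is this consistency step: because the method is nonconforming, $A$ and $A_h$ do not agree on the interpolant $(\boldsymbol{\mathcal{I}}_h\bv,\Xi_h q)$, so one must carefully track the penalty and symmetrization contributions of $a_h^{\nabla}(\cdot,\cdot)$ when subtracting the discrete equation $A_h((\bu_h,p_h),(\boldsymbol{\mathcal{I}}_h\bv,\Xi_h q))=\lambda_h(\bu_h,\boldsymbol{\mathcal{I}}_h\bv)_{0,\O}$. Once this identity is used to insert the interpolation errors, the resulting residual functional is estimated termwise against $\boldsymbol{\eta}\,\vertiii{(\bv,q)}$ plus the two higher-order terms; finally the bound $\vertiii{(\bv,q)}\leq C_1\vertiii{(\bu-\bu_h^c,p-p_h)}$ is absorbed, one power of the conforming error is cancelled against $C_2\vertiii{(\bu-\bu_h^c,p-p_h)}^2$, and combining with the remainder estimate of the first paragraph completes the proof.
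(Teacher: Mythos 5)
Your proposal is correct and follows essentially the same route as the paper's proof: the conforming/nonconforming splitting $\bu_h=\bu_h^c+\bu_h^r$ with \eqref{eq_cotasup}, Lemma \ref{lemma:elliptic} applied to the conforming error, insertion of the discrete equation tested at the Scott--Zhang interpolant, elementwise integration by parts producing the residual and jump terms, and the splitting $\lambda\bu-\lambda_h\bu_h=(\lambda-\lambda_h)\bu_h+\lambda(\bu-\bu_h)$ for the higher-order terms. The only cosmetic difference is that you test the discrete problem with $(\boldsymbol{\mathcal{I}}_h\bv,\Xi_h q)$ rather than $(\boldsymbol{\mathcal{I}}_h\bv,0)$ as the paper does, which is equivalent since $b_h(\bu_h,q_h)=0$ for all $q_h\in\mathcal{Q}_h$; the symmetrization term you flag as the main obstacle is exactly the term $B_2$ in the paper, handled there by a trace/inverse inequality and Scott--Zhang stability against the jump part of the estimator.
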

\begin{proof}
Let $(\lambda,(\bu,p))\in \mathbb{R}\times \boldsymbol{\H}_{\Gamma_1}(\Omega)\times \mathrm{L}^2(\Omega)$ be a solution of Problem \ref{prob:continuous} and $(\lambda_h,(\bu_h,p_h))\in \mathbb{C}\times \boldsymbol{\boldsymbol{\mathcal{V}}}_h\times\mathcal{Q}_h$  solution of problem \eqref{eq:weak_stokes_system_dg} with $\|\bu_h\|_{0,\O}=1$, we will decompose $\bu_h$ as $\bu_h=\bu_h^c+\bu_h^r$. Thus using triangular inequality and \eqref{eq_cotasup}, we have that
\begin{multline}\label{eq_1confibalilidad}
\vertiii{(\bu-\bu_h,p-p_h)}_{h}=\vertiii{(\bu-\bu_h^c,p-p_h)}_h+\|\bu_h^r\|_{\boldsymbol{\mathcal{V}}(h)}\\\leq \vertiii{(\bu-\bu_h^c,p-p_h)}_h+C_p\nu^{-1}\left(\sum_{T\in\CT_h}\boldsymbol{\eta}_T^2\right)^{1/2}.
\end{multline}

The next step is to bound the first term of the right hand side of the previous inequality, To this end, we first note that since $\bu-\bu_h^c\in \boldsymbol{\H}_{\Gamma_1}(\Omega)$ and $p-p_h\in \L^2(\O)$, it follows that 
$\vertiii{(\bu-\bu_h^c,p-p_h)}_{h}=\vertiii{(\bu-\bu_h^c,p-p_h)}$. Therefore, we can invoke Lemma \ref{lemma:elliptic} which gives us a function $(\bv,q)\in \boldsymbol{\H}_{\Gamma_1}(\Omega)\times \mathrm{L}^2(\Omega)$ such that $\vertiii{(\bv,q)}_h\leq C_1 \vertiii{(\bu-\bu_h^c,p-p_h)}_h$, where $ C_1> 0$ is the constant involved in Lemma \ref{lemma:elliptic}. Then,  we have 
\begin{multline}\label{eq_2confibalilidad}
C_2^{-1}\vertiii{(\bu-\bu_h^c,p-p_h)}_h^2\leq A((\bu-\bu_h^c,p-p_h),(\bv,q))\\=A((\bu,p),(\bv,q))-A((\bu_h^c,p_h)(\bv,q))\\ 
=\lambda(\bu,\bv)_{0,\O}-\lambda_h(\bu_h,\bv)_{0,\O}+\lambda_h(\bu_h,\bv)_{0,\O}-A((\bu_h,p_h),(\bv,q))+A((\bu_h^r,p_h),(\bv,q)),
\end{multline}
we note that if we define $\bv_h^c:=	\boldsymbol{\mathcal{I}}_h\bv\in \boldsymbol{\mathcal{V}}_h^c$ as the Scott-Zhang quasi-interpolation of $\bv$,  and testing problem \eqref{eq:weak_stokes_system_dg} with $(\bv_h^c,0)$, we obtain that
$$A_h((\bu_h,p_h),(\bv_h^c,0))=\lambda_h(\bu_h,\bv_h^c)_{0,\O}.$$
Therefore, adding and subtracting $\lambda_h(\bu_h,\bv_h^c)_{0,\O}$ in inequality \eqref{eq_2confibalilidad} and using the above equality, we have that
\begin{multline*}
C_2^{-1}\vertiii{(\bu-\bu_h^c,p-p_h)}_h^2\leq  (\lambda\bu-\lambda_h\bu_h,\bv)_{0,\O}+\lambda_h(u_h,\bv-\bv_h^c)_{0,\O}\\
+A_h((\bu_h,p_h),(\bv_h^c,0))-A((\bu_h,p_h),(\bv,q))+A((\bu_h^r,p_h),(\bv,q)).
\end{multline*}

Now, using the fact that $\bv_h^c\in \boldsymbol{\H}_{\Gamma_1}(\Omega)$, applying the definitions of $A(\cdot,\cdot)$ and $A_h(\cdot,\cdot)$, and \cite[Proposition 4.1]{Paul2005347}, the previous inequality can be rewritten as follows
\begin{equation}\label{eq_3confibalilidad}
C_2^{-1}\vertiii{(\bu-\bu_h^c,p-p_h)}_h^2\leq B_1+B_2+B_3+B_4+B_5,
\end{equation}
where 
\begin{equation*}
B_1:=(\lambda\bu-\lambda_h\bu_h,\bv)_{0,\O},\quad B_2:=-\varepsilon \int_{\cF^*_h}\mean{\nu\nabla_h\bv_h}:\jump{\bu_h},
\end{equation*}
\begin{equation*}
B_3:=\int_{\O}\div_h \bu_h q+\int_{\O}\nu\nabla_h\bu_h^r:\nabla\bv+\int_{\Omega} \mathbb{K}^{-1} \bu_h^r \cdot \bv,\quad B_4:=\lambda_h(\bu_h,\bv-\bv_h^c)_{0,\O},
\end{equation*}
and
\begin{align*}
B_5&=-\int_{\Omega} \mathbb{K}^{-1} \bu_h \cdot (\bv-\bv_h^c)-\int_{\O}\nu\nabla_h\bu_h:\nabla(\bv-\bv_h^c)\\
&+\int_{\O}\div_h (\bv-\bv_h^c)p_h+\int_{\O}\div_h \bu_h q.
\end{align*}

The task now is to estimate each of the terms on the right hand side of \eqref{eq_3confibalilidad}. For the term $B_1$ we apply a trace estimate in conjunction with a discrete inverse inequality to an edge or face $F\in\cF^*_h$, where $F=T_1\cap T_2$ if $F\in\cF^0_h$ and $F=T_1\cap \partial \O$ with $T_2=\emptyset$ if $F\in \cF^{\partial}_h$, we obtain
$$\|\nabla_h\bv_h^c\|_{0,F}\leq C h_F^{-1/2}\|\nabla_h\bv_h^c\|_{0,T_1\cup T_2}.$$
Thus, using the stability of the Scott-Zhang quasi-interpolator (cf.  Lemma \ref{lemmaSZ}), we obtain that:
\begin{multline*}
B_2\leq C_{SZ}C_{\nu}C_p|\varepsilon|\left(\sum_{T\in\CT_h}\|\nabla\bv\|_{0,T}^2\right)^{1/2}\left(\sum_{T\in\CT_h}\boldsymbol{\eta}_T^2\right)^{1/2}\\
\leq C_{SZ}C_{\nu}C_p|\varepsilon|\left(\sum_{T\in\CT_h}\boldsymbol{\eta}_T^2\right)^{1/2}\vertiii{(\bu-\bu_h,p-p_h)}_{h}.
\end{multline*}

Let us focus on $B_3$.  Applying the  Cauchy-Schwarz inequality and \eqref{eq_cotasup} we obtain
\begin{multline*}
B_3\leq \max\{\mathbb{K}^*,\nu,1\}\|\bu_h^r\|_{\boldsymbol{\mathcal{V}}(h)}\vertiii{(\bv,q)}\\
\leq C_1\max\{\mathbb{K}^*,\nu,1\} \|\bu_h^r\|_{\boldsymbol{\mathcal{V}}(h)}\vertiii{(\bu-\bu_h,p-p_h)}_{h}\\
\leq C_1\max\{\mathbb{K}^*,\nu,1\}C_p\nu^{-1}\left(\sum_{T\in\CT_h}\boldsymbol{\eta}_T^2\right)^{1/2}\vertiii{(\bu-\bu_h,p-p_h)}_{h}.
\end{multline*}

Finally, we proceed to bound the last two terms $B_4$ and $B_5$. From  integration by parts and the approximation properties of the Scott-Zhang interpolator we obtain the following result
\begin{multline*}
B_4+B_5\\= \sum_{T\in\CT_h}\left(\int_T\left(\lambda_h\bu_h-\mathbb{K}^{-1} \bu_h )\cdot(\bv-\bv_h^c)\right)-\int_T \nu\nabla_h\bu_h:\nabla(\bv-\bv_h^c):(\bv-\bv_h^c)\right.\\
\left.+\int_{T}\div_h (\bv-\bv_h^c)p_h+\int_{T}\div_h \bu_h q\right)\\
=\sum_{T\in\CT_h}\left(\int_T\left[\lambda_h\bu_h-\mathbb{K}^{-1} \bu_h +\nu\Delta_h \bu_h + \nabla_h p_h\cdot(\bv-\bv_h^c)\right]+\int_{T}\div_h \bu_h q\right)\\
+\sum_{T\in\CT_h}\left(\int_{\partial T}(\nu \nabla_h\bu_h- p_h\mathbb{I})\boldsymbol{n}\cdot(\bv-\bv_h^c)\right)
\leq \left(\sum_{T\in\CT_h}\boldsymbol{\eta}_T^2\right)^{1/2}\vertiii{(\bu-\bu_h,p-p_h)}_{h},\\
\end{multline*}
where for the last inequality, we have used that $\bv_h^c=\boldsymbol{\mathcal{I}}_h\bv$ and the approximation properties of the Scott-Zhang quasi-interpolator (see Lemma \ref{lemmaSZ}).

Thus, replacing in \eqref{eq_2confibalilidad}, all the estimates obtained to bound the different $B_i$ with $i=1,2,3,4,5$ and then replacing this result in \eqref{eq_1confibalilidad}, the property is proved.
\end{proof}
We observe that from the proof of Theorem \ref{theorem_ordendoble} and the previous theorem, we obtain the following result
\begin{corollary}
There exists a constant $C>0$ independent of $h$ such that 
$$|\lambda-\lambda_h|\leq C\left(\boldsymbol{\eta}+|\lambda-\lambda_h|+\|\bu-\bu_h\|_{0,\Omega}\right)^2.$$
\end{corollary}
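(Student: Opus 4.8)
The plan is to obtain the claim as a direct consequence of the quadratic eigenvalue bound buried inside the proof of Theorem~\ref{theorem_ordendoble} together with the reliability estimate of Theorem~\ref{eficiencia}; no new argument is really needed beyond chaining these two results.

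First I would isolate, from the proof of Theorem~\ref{theorem_ordendoble}, the intermediate inequality reached just before invoking Remark~\ref{rmrk:11}. Starting from the algebraic identity \eqref{eq:padra}, using the lower bound $(\bu_h,\bu_h)_{0,\O}>\widetilde{C}>0$, the continuity constant $C^{\star}$ of $a_h(\cdot,\cdot)$ from \eqref{eq:bound_ah}, and the boundedness of $b_h(\cdot,\cdot)$, that proof already establishes
$$|\lambda-\lambda_h|\leq C\,\vertiii{(\bu-\bu_h,\,p-p_h)}_h^2,$$
with $C$ independent of $h$ (it collects $\widetilde{C}^{-1}$, $C^{\star}$ and $|\lambda|$, the latter being fixed since $\lambda$ is the eigenvalue under consideration). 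This is the only ingredient required from the a priori analysis.

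Next I would substitute the reliability bound of Theorem~\ref{eficiencia},
$$\vertiii{(\bu-\bu_h,\,p-p_h)}_h\leq C\bigl(\boldsymbol{\eta}+|\lambda|\,\|\bu-\bu_h\|_{0,\O}+|\lambda-\lambda_h|\bigr),$$
into the previous display. Squaring the reliability estimate and inserting it yields
$$|\lambda-\lambda_h|\leq C\bigl(\boldsymbol{\eta}+|\lambda|\,\|\bu-\bu_h\|_{0,\O}+|\lambda-\lambda_h|\bigr)^2,$$
and since $|\lambda|$ is a fixed constant it is absorbed into the generic $C$ (or, keeping track more carefully, one uses $(a+b+c)^2\le 3(a^2+b^2+c^2)$ and rescales), giving precisely the asserted inequality.

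I do not expect a genuine obstacle: the statement is a formal corollary of two results already proved. The only delicate point is the bookkeeping of constants and norms. One must check that the quadratic estimate extracted from Theorem~\ref{theorem_ordendoble} is phrased in exactly the triple-bar norm $\vertiii{(\cdot,\cdot)}_h$ appearing on the left of Theorem~\ref{eficiencia}, so that the two bounds compose cleanly, and that the recurring factor $|\lambda|$ stays uniformly bounded for the fixed eigenvalue. A minor remark worth including is that the resulting inequality is not circular: for $h$ small enough $|\lambda-\lambda_h|$ is small, so the term $|\lambda-\lambda_h|^2$ on the right is of strictly higher order than the left-hand side and could, if a non-implicit bound were preferred, be absorbed on the left; the corollary nonetheless keeps it explicit.
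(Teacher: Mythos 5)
Your proposal is correct and follows exactly the route the paper intends: the paper's corollary is stated as a consequence of the intermediate quadratic bound $|\lambda-\lambda_h|\leq C\vertiii{(\bu-\bu_h,p-p_h)}_h^2$ extracted from the proof of Theorem~\ref{theorem_ordendoble} combined with the reliability estimate of Theorem~\ref{eficiencia}, with $|\lambda|$ absorbed into the generic constant. Your remarks on norm bookkeeping and non-circularity are sensible additions but do not change the argument.
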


\subsection{Efficiency}\label{eficiencia_s}
In order to perform the efficiency analysis, we adopt standard arguments widely recognized in the literature, which pertain to the behavior of bubble functions (see \cite{Lepe2024,Gedicke2020585,verfuhrt1996}). Given that these approaches have been thoroughly documented and validated in prior works, the detailed proof is omitted here, and only the main result is presented.

\begin{theorem}
Let $(\lambda,(\bu,p))\in \mathbb{R}\times \boldsymbol{\H}_{\Gamma_1}(\Omega)\times \mathrm{L}^2(\Omega)$ be a solution of Problem \ref{prob:continuous} and $(\lambda_h,(\bu_h,p_h))\in \mathbb{C}\times \boldsymbol{\boldsymbol{\mathcal{V}}}_h\times\mathcal{Q}_h$ its mixed DG solution of problem \eqref{eq:weak_stokes_system_dg}. Then, there exists a constant $C>0$ independent of $h$ such that
\begin{equation*}
\boldsymbol{\eta}\leq C\vertiii{(\bu-\bu_h,p-p_h)}_{h}+ h.o.t.
\end{equation*}
where $h.o.t:=\left(\sum_{T\in\CT_h}h_T^2\left(|\lambda-\lambda_h|^2+|\lambda|^2\|\bu-\bu_h\|_{0,\O}^2\right)\right)^{1/2}.$
\end{theorem}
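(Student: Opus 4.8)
The plan is to establish a local lower bound for each of the six contributions to $\boldsymbol{\eta}_T^2$, so that summing over $T\in\CT_h$ and using the finite overlap of the edge/face patches (guaranteed by the shape regularity of $\CT_h$) yields the global efficiency estimate; this is the classical residual--bubble-function argument (see \cite{verfuhrt1996,Lepe2024}). I would first dispose of the terms already controlled by the error norm without invoking any bubble function. Since the exact velocity satisfies $\div\bu=\boldsymbol{0}$ and $\bu\in\boldsymbol{\H}_{\Gamma_1}(\Omega)$ (so $\jump{\bu}=\boldsymbol{0}$ across interior faces and $\bu=\boldsymbol{0}$ on $\Gamma_1$), we may write $\div\bu_h=\div_h(\bu_h-\bu)$, $\jump{\bu_h}=\jump{\bu_h-\bu}$ on $F\in\cF_h^0$, and $\bu_h\otimes\bn=(\bu_h-\bu)\otimes\bn$ on $F\in\cF_h^1$. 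Hence the divergence term and the two velocity-jump terms are bounded, up to the factor $\nu$, directly by $\|\bu-\bu_h\|_{\boldsymbol{\mathcal{V}}(h)}$, because $\|h_{\cF}^{-1/2}\jump{\bu-\bu_h}\|_{0,\cF_h}$ and $\|\nabla_h(\bu-\bu_h)\|_{0,\O}$ are precisely part of $\vertiii{(\bu-\bu_h,p-p_h)}_h$.

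For the interior (volume) residual $\boldsymbol{r}_T:=\lambda_h\bu_h+\nu\Delta\bu_h-\mathbb{K}^{-1}\bu_h-\nabla p_h$ I would use the element bubble function $b_T$, supported in $T$, with the usual norm equivalences $\|b_T\boldsymbol{w}\|_{0,T}\sim\|\boldsymbol{w}\|_{0,T}$ and $\|\nabla(b_T\boldsymbol{w})\|_{0,T}\le Ch_T^{-1}\|\boldsymbol{w}\|_{0,T}$ for polynomials $\boldsymbol{w}$. Testing the strong form \eqref{eq:Brinkman1}, namely $\mathbb{K}^{-1}\bu-\nu\Delta\bu+\nabla p=\lambda\bu$ (valid elementwise thanks to the regularity \eqref{eq_regularityofev}), against $b_T\bar{\boldsymbol{r}}_T$ with $\bar{\boldsymbol{r}}_T$ a polynomial approximation of $\boldsymbol{r}_T$, and integrating by parts, the derivatives fall onto the errors $\bu-\bu_h$ and $p-p_h$. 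Using the bubble estimates together with an inverse inequality gives $h_T\|\boldsymbol{r}_T\|_{0,T}\le C(\|\bu-\bu_h\|_{\boldsymbol{\mathcal{V}}(h),T}+\|p-p_h\|_{0,T})+\text{h.o.t.}$; here the eigenvalue data, decomposed as $\lambda_h\bu_h-\lambda\bu=(\lambda_h-\lambda)\bu_h+\lambda(\bu_h-\bu)$, contributes exactly the term $h_T(|\lambda-\lambda_h|+|\lambda|\|\bu-\bu_h\|_{0,T})$ collected into $h.o.t.$, while the non-polynomial character of $\mathbb{K}^{-1}\bu_h$ produces a data-oscillation remainder that is absorbed into the higher-order terms.

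For the flux terms $\jump{\nu\nabla\bu_h-p_h\mathbb{I}}\bn$ on $F\in\cF_h^0$ and $(\nu\nabla\bu_h-p_h\mathbb{I})\bn$ on $F\in\cF_h^2$ (where the exact Cauchy stress vanishes by the natural boundary condition \eqref{bc:Gamma}), I would use the face bubble $b_F$ with the scaled extension estimates $\|b_F\boldsymbol{\phi}\|_{0,F}\sim\|\boldsymbol{\phi}\|_{0,F}$, $\|b_F\boldsymbol{\phi}\|_{0,T}\le Ch_F^{1/2}\|\boldsymbol{\phi}\|_{0,F}$, and $\|\nabla(b_F\boldsymbol{\phi})\|_{0,T}\le Ch_F^{-1/2}\|\boldsymbol{\phi}\|_{0,F}$ on the patch $\omega_F$ of elements sharing $F$. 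Testing against the extension of $b_F$ times a polynomial approximation of the flux jump and integrating by parts over each element of $\omega_F$, the volume residual reappears and is absorbed using the bound from the previous step; what remains is controlled by the local error and a data-oscillation remainder, giving $h_F^{1/2}\|\jump{\nu\nabla\bu_h-p_h\mathbb{I}}\bn\|_{0,F}\le C(\|\bu-\bu_h\|_{\boldsymbol{\mathcal{V}}(h),\omega_F}+\|p-p_h\|_{0,\omega_F})+\text{h.o.t.}$

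Collecting all local estimates and summing over $T\in\CT_h$ then yields $\boldsymbol{\eta}\le C\vertiii{(\bu-\bu_h,p-p_h)}_h+h.o.t.$ with $h.o.t.$ of the stated form. The main obstacle I anticipate is the bookkeeping forced by the variable permeability $\mathbb{K}^{-1}$ and by the eigenvalue term: since $\mathbb{K}^{-1}\bu_h$ and $\lambda_h\bu_h$ are not piecewise polynomials, one must introduce oscillation terms and verify that they genuinely carry an extra power of $h_T$ (or converge faster), so that they fall into $h.o.t.$ rather than polluting the dominant error norm. Isolating the eigenvalue contribution as $h_T(|\lambda-\lambda_h|+|\lambda|\|\bu-\bu_h\|_{0,T})$ and confirming, via the rates of Theorem \ref{theorem_ordendoble}, that it is truly subordinate to $\vertiii{(\bu-\bu_h,p-p_h)}_h$ is the technically delicate point.
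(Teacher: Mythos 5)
Your proposal is correct and follows exactly the approach the paper intends: the paper in fact omits the detailed proof of this theorem, stating only that it follows from "standard arguments \ldots which pertain to the behavior of bubble functions" (citing Verf\"urth-type residual techniques), and your write-up --- element bubbles for the volume residual, face bubbles for the stress jumps, direct control of the divergence and velocity-jump terms by the error norm, and the splitting $\lambda_h\bu_h-\lambda\bu=(\lambda_h-\lambda)\bu_h+\lambda(\bu_h-\bu)$ yielding the stated $h.o.t.$ --- is precisely that standard argument carried out in detail.
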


\section{Numerical experiments}
\label{sec:numerics}
This section is dedicated to conducting various numerical experiments to assess the performance of the scheme across different geometries and physical configurations. The implementations are using the DOLFINx software \cite{barrata2023dolfinx,scroggs2022basix}, where the SLEPc eigensolver \cite{hernandez2005slepc} and the MUMPS linear solver are employed to solve the resulting generalized eigenvalue problem. Meshes are generated using GMSH \cite{geuzaine2009gmsh} and the built-in generic meshes provided by DOLFINx. The convergence rates for each eigenvalue are determined using least-squares fitting and highly refined meshes. 

In what follows, we denote the mesh resolution by $N$, which is connected to the mesh-size $h$ through the relation $h\sim N^{-1}$. We also denote the number of degrees of freedom by $\texttt{dof}$. The relation between $\texttt{dof}$ and the mesh size is given by $h\sim\texttt{dof}^{-1/d}$, with $d\in\{2,3\}$. 

Let us define $\err(\lambda_i)$ as the error on the $i$-th eigenvalue, with
$$
\err(\lambda_i):={\vert \lambda_{h,i}-\lambda_{i}\vert},
$$
where $\lambda_i$ is the extrapolated value. Similarly, the effectivity indexes with respect to $\eta$ and the eigenvalue $\lambda_{h,i}$ is defined by
$$
\eff(\lambda_i):=\frac{\err(\lambda_i)}{\boldsymbol{\eta}^2}.$$

In order to apply the adaptive finite element method, we shall generate a sequence of nested conforming triangulations using the loop
\begin{center}
	\textrm{solve $\rightarrow$ estimate $\rightarrow$ mark $\rightarrow$ refine,} 
\end{center}
based on \cite{verfuhrt1996}:
\begin{enumerate}
\item Set an initial mesh $\CT_{h}$.
\item Solve \eqref{eq:weak_stokes_system_dg} in the actual mesh to obtain $(\lambda_{h},(\bu_h,p_h))$. 	
\item Compute $\boldsymbol{\eta}_T$ for each $T\in\CT_{h}$ using the eigenfunctions $(\bu_h,p_h)$. 
\item Use Dörfler \cite{dorfler1996convergent} marking criterion to construct a subset $\mathcal{S}_h\subset\CT_{h}$ such that we refine the elements $T$ that satisfies
$$
\sum_{T\in \mathcal{S}_h}\boldsymbol{\eta}_{T}^2\geq \zeta\sum_{T\in \CT_{h}}\boldsymbol{\eta}_{T}^2,
$$
for some $\zeta\in(0,1)$.
\item Set $\CT_{h}$ as the actual mesh and go to step 2.
\end{enumerate}
For 2D experiments, we choose $\upsilon=0.6$, while $\upsilon=0.8$ is chosen for 3D test.

It is worth noting that, while the permeability tensor is theoretically assumed to be positive definite, in the numerical experiments, it is set close to $\boldsymbol{0}$. As a result, the eigenfunctions in these regions exhibit behavior consistent with the Stokes eigenvalue problem. For all experiments, we set $\nu=1$ and consider various choices for $\mathbb{K}$. The meshing of $\Omega$ into subdomains is such that there is conformity between the regions, i.e, the subregions are delimited exactly by the facets of the domain. 

The choice of the stabilization parameter is an important aspect in the correct prediction of the eigenvalues. Different studies \cite{MR2324460,MR3962898,MR4077220, MR4623018} have shown that taking a sufficiently large $\texttt{a}_S:=\texttt{a}k^2$ guarantees an accurate computation of the spectrum. 

\subsection{Stability analysis on a square domain with a porous subdomain} \label{subsec:square2D-one-obstacle}
Let us consider the domain $\Omega:=(0,1)^2$, $\Omega_D:=(3/8,5/8)^2$ and $\Omega_S:=\Omega\backslash\Omega_D$, which consists of the unit square domain with an internal, possibly porous subdomain $\Omega_D$. For each region, we define the following permeability parameter
$$
\mathbb{K}^{-1}=\left\{
\begin{aligned}
	&\kappa\mathbb{I}&\text{if } (x,y)\in\Omega_D,\\
	&\boldsymbol{0}, & \text{if } (x,y)\in\Omega_S.
\end{aligned}
\right.
$$
This choice determines a region of full permeability on $\Omega_S$, while a variable porosity is considered in $\Omega_D$. The idea of the experiment is to study the convergence of the DG scheme when $\kappa$ is changed. For the tests in this section, we will consider reference values computed using the method proposed in \cite{lepe2025jsc}. 

\subsubsection{Dependence on the stabilization parameter}\label{subsec:square2D-alpha-dependence} 
We start by analyzing what happens when we start moving the alpha stabilization parameter. According to Lemma \ref{lmm:disc_ellipticity}, we note that $\texttt{a}_S$ must be large enough to guarantee the stability of the method. A mesh resolution $N=16$, corresponding to $\texttt{dof}=4424$ for $k=1$ is selected.

First, we show the results for the first 40 eigenvalues computed with the three variants of the methods in Figure \ref{fig:square_alpha_dependence}. Here, it is noticeable that the symmetric method presents instability for values of $\texttt{a}\leq 2.5$, while the incomplete and non-symmetric methods can tolerate values closer to zero. The oscillations observed for small values of $\texttt{a}$, including veering and crossing between eigenvalues, are due to the eigensolver detecting spurious eigenvalues, which can be positive or negative, real or imaginary. Also, there is little to no difference in the stabilization of the schemes when choosing different permeability parameters. 

To further study stabilization, given $\texttt{a}>0$, we extract the eigenvalues computed by the solver and compare them with existing methods. In particular, we consider the numerical method given \cite{lepe2025jsc} using Taylor-Hood elements. The results for the values of $\texttt{a}\in\{0.5,3,10\}$ are shown in Figure \ref{fig:square_alfas_10em8}. As expected, for $\texttt{a}=0.5$ the symmetric method shows spurious eigenvalues, while the other methods show an underestimation in the prediction for this stabilization value. On the other hand, for larger values of alpha, the tendency is always towards overestimation, which can be observed in all permeability cases, although for $\texttt{a}=3$ the prediction is quite accurate. This selection, although it gives a small error with respect to the rest, does not guarantee that the convergence is optimal.  In conclusion, a safe parameter for all the cases is $\texttt{a}>3$. Big values of $\alpha$ will produce overprediction of eigenvalues, but it may help with convergence. In particular, considering \cite{MR4623018} as a reference, we note that a safe parameter for the method is $\texttt{a}\geq10$. We select this parameter for all experiments in the rest of the numerical section.

\subsubsection{Convergence of the DG schemes}\label{subsec:square2D_convergence}
This section analyzes the computational convergence of the proposed DG schemes when a safe stabilization parameter is given. All the cases consider $\mathrm{a}=10$. The reference values computed from \cite{lepe2025jsc} are shown in Table \ref{table:square2D-one-obstacle-reference-values} for the different permeability cases under study.

\begin{table}[t!]
	\setlength{\tabcolsep}{4.5pt}
	\centering 
	\caption{Example \ref{subsec:square2D-one-obstacle}. Lowest four reference eigenvalues for different permeability parameters, computed using the method from~\cite{lepe2025jsc}.}
	\label{table:square2D-one-obstacle-reference-values}
	{\small\begin{tabular}{|r|c|c|c|}
			\hline\hline
			&   $\mathbb{K}^{-1}\vert_{\Omega_D}$ =  $10^{-8}$ &   $\mathbb{K}^{-1}\vert_{\Omega_D}$ = $10^{3}$ &   $\mathbb{K}^{-1}\vert_{\Omega_D}$ = $10^{5}$\\
			\hline
			$\lambda_1$ & 52.3447 & 65.3658 &74.4455\\
			$\lambda_2$ &  92.1244 & 167.7481 &214.1789\\
			$\lambda_3$ & 92.1244& 182.6605 &222.0403\\
			$\lambda_4$ & 128.2096& 182.6605 & 222.0352\\
			\hline
			\hline
	\end{tabular}}
	\smallskip
\end{table}

The absolute error and convergence behavior for the different schemes and permeability parameters are presented in Figures~\ref{fig:square_error_1em8}--\ref{fig:square_error_1e3}. The error history in Figure~\ref{fig:square_error_1em8}, which corresponds to the case $\mathbb{K}^{-1}|_{\Omega_D} = 10^{-8} \mathbb{I}$, shows optimal convergence for all values of $k$ when the symmetric scheme is used. A small perturbation is observed for $k=3$. However, for the non-symmetric methods with $k=2,3$, a convergence rate of order $\mathcal{O}(h^{2(k-1)})$ is observed, which reflects the suboptimal behavior predicted in Theorem~\ref{theorem_ordendoble}.

We also analyze convergence in the case of a semi-permeable zone by considering $\mathbb{K}^{-1}|_{\Omega_D} = 10^3 \mathbb{I}$ to study the maximum achievable convergence rate. From the results in Figure~\ref{fig:square_error_1e3}, we observe that for $k=1,2$, the methods behave similarly to the previous case. For $k=3$, however, the symmetric method yields only $\mathcal{O}(\texttt{dof}^{-2}) \approx \mathcal{O}(h^4)$, suggesting that the geometric regularity induces $\min\{r,k\}=r=2$. The non-symmetric methods exhibit behavior consistent with theoretical expectations.

Finally, although not shown here, we also studied the case $\mathbb{K}^{-1}|_{\Omega_D} = 10^5 \mathbb{I}$. In that case, convergence of order $\mathcal{O}(h^m)$ was observed, with $1.2 < m < 1.8$ for all values of $k$, which aligns with the expected behavior due to the obstacle effect induced by the subdomain $\Omega_D$ and the presence of reentrant corners within the domain $\Omega$. 

\begin{figure}[!hbt]\centering
	\begin{minipage}{0.325\linewidth}\centering
		{\footnotesize $\varepsilon=1, \mathbb{K}^{-1}\vert_{\Omega_D}=10^{-8}$}\\
		\includegraphics[scale=0.17,trim=0cm 0cm 1.5cm 2.5cm, clip]{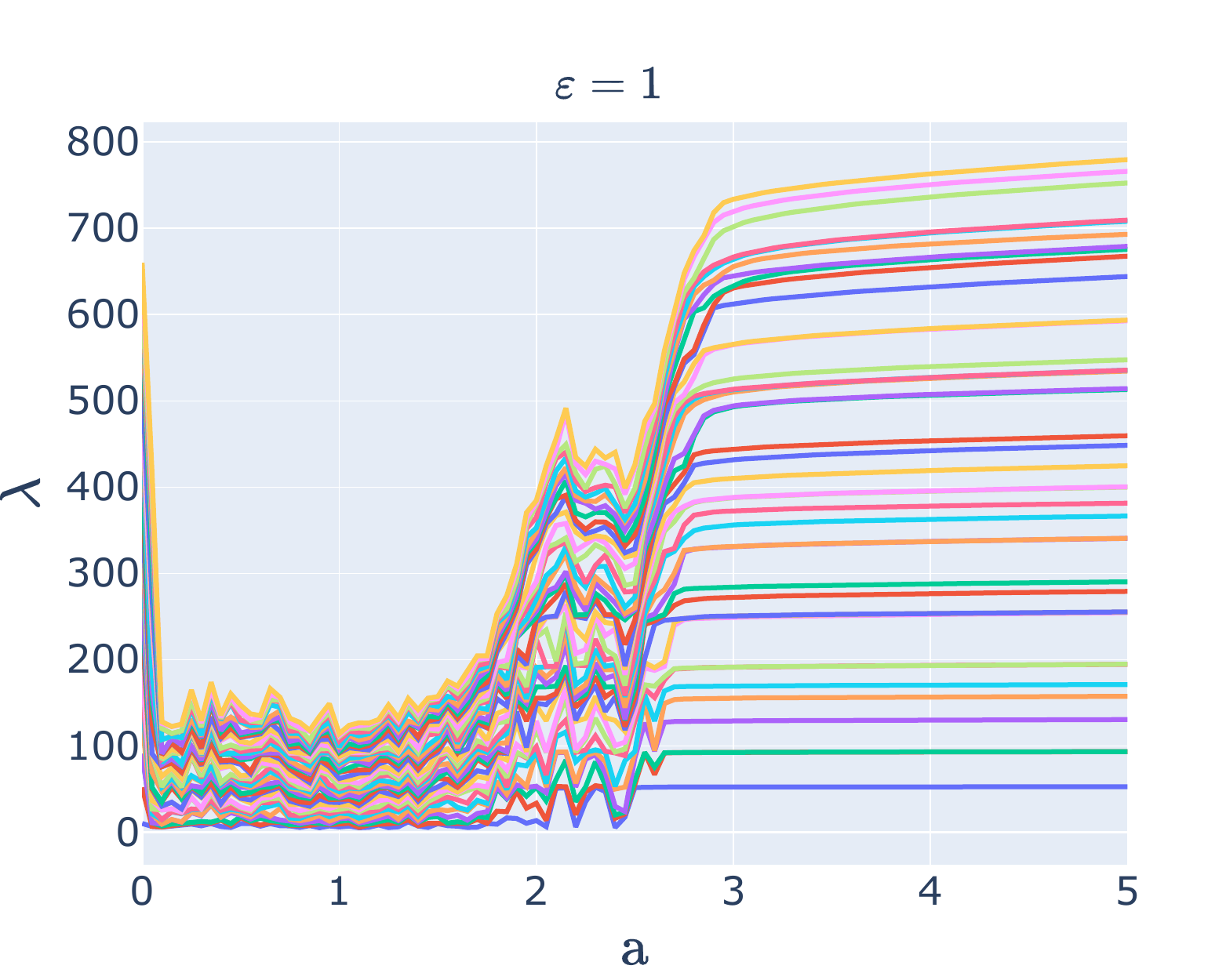}
	\end{minipage}
	\begin{minipage}{0.325\linewidth}\centering
		{\footnotesize $\varepsilon=1, \mathbb{K}^{-1}\vert_{\Omega_D}=10^3$}\\
		\includegraphics[scale=0.17,trim=0cm 0cm 1.5cm 2.5cm, clip]{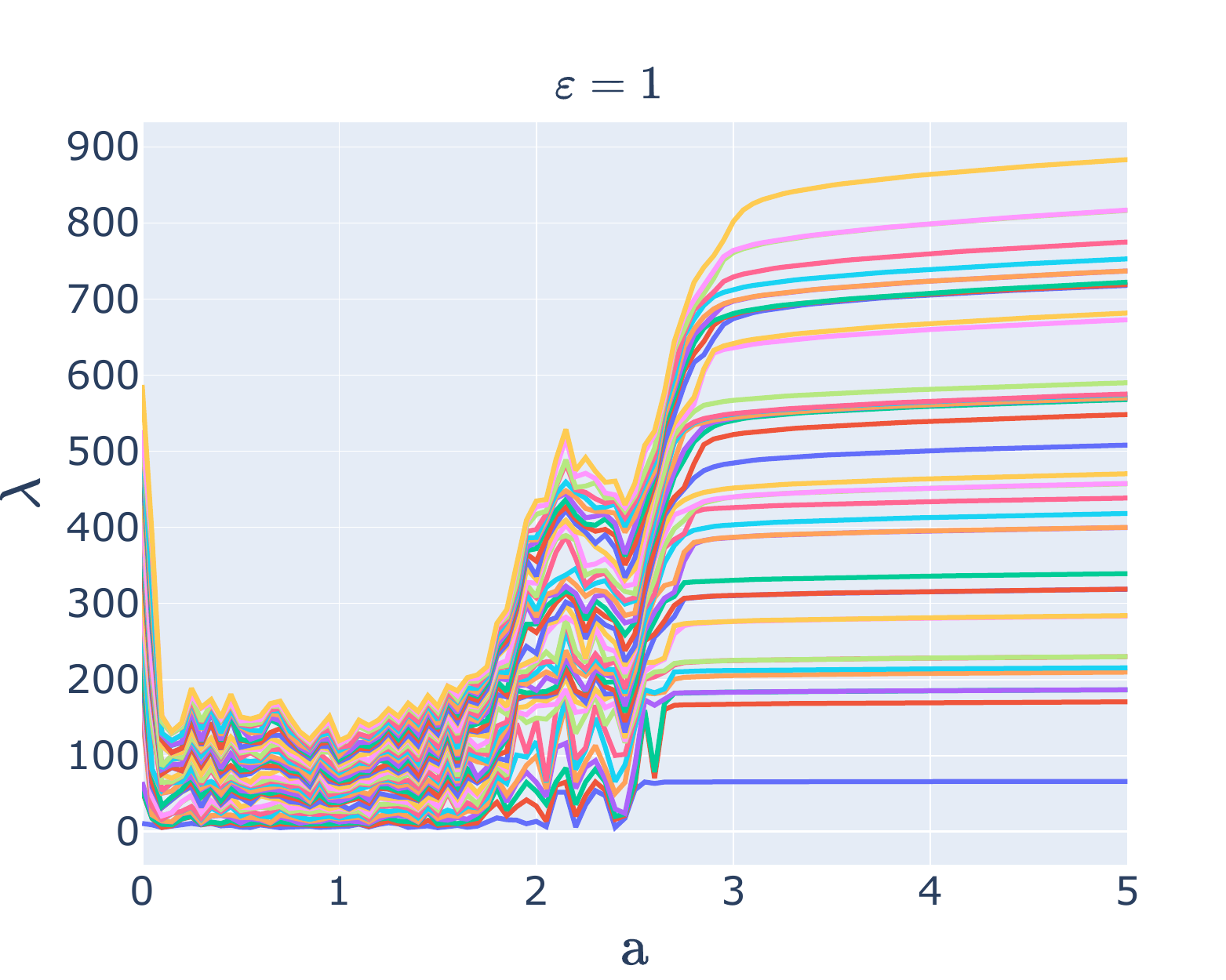}
	\end{minipage}
	\begin{minipage}{0.325\linewidth}\centering
		{\footnotesize $\varepsilon=1, \mathbb{K}^{-1}\vert_{\Omega_D}=10^5$}\\
		\includegraphics[scale=0.17,trim=0cm 0cm 1.5cm 2.5cm, clip]{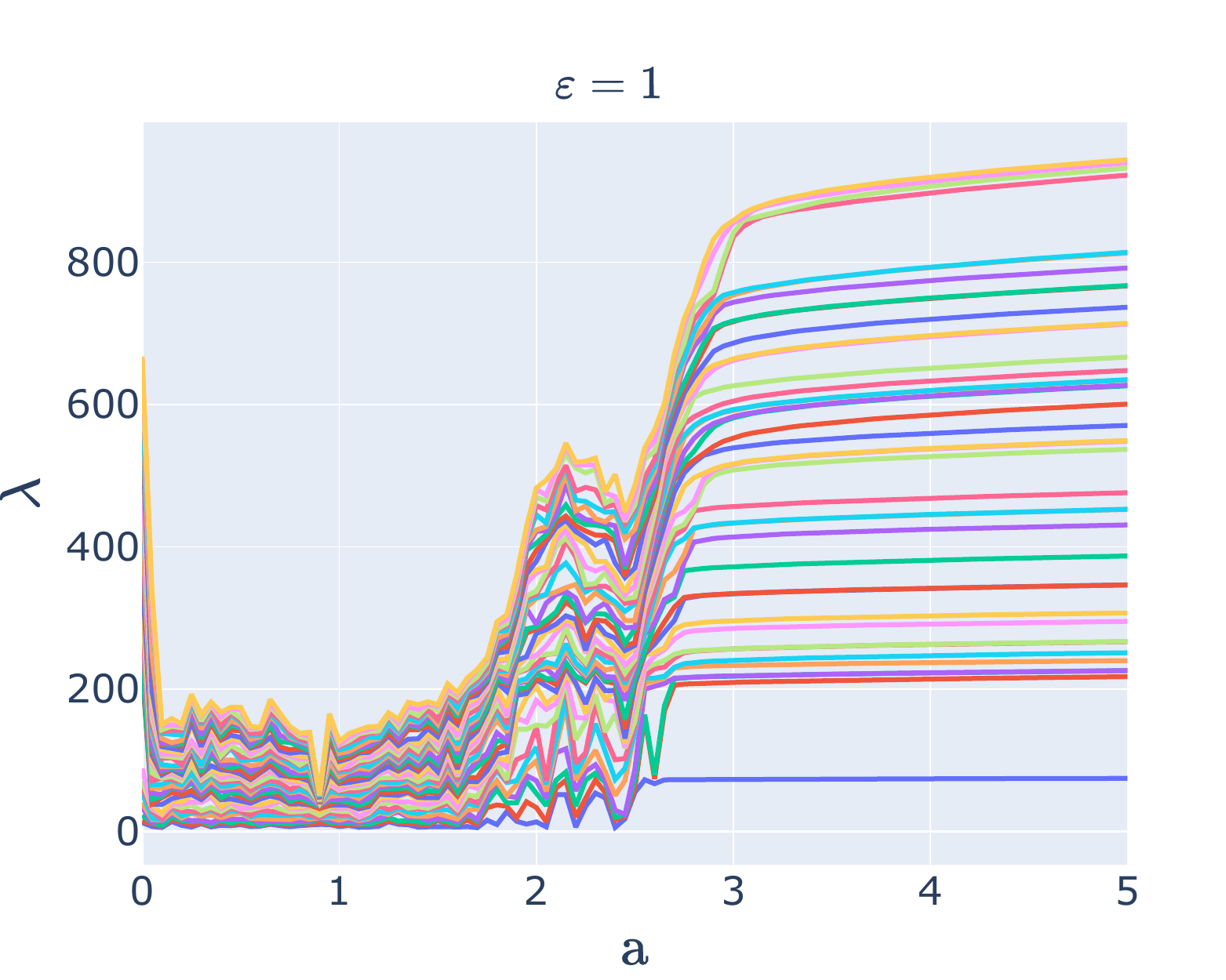}
	\end{minipage}\\
	\begin{minipage}{0.325\linewidth}\centering
		{\footnotesize $\varepsilon=0, \mathbb{K}^{-1}\vert_{\Omega_D}=10^{-8}$}\\
		\includegraphics[scale=0.17,trim=0cm 0cm 1.5cm 2.5cm, clip]{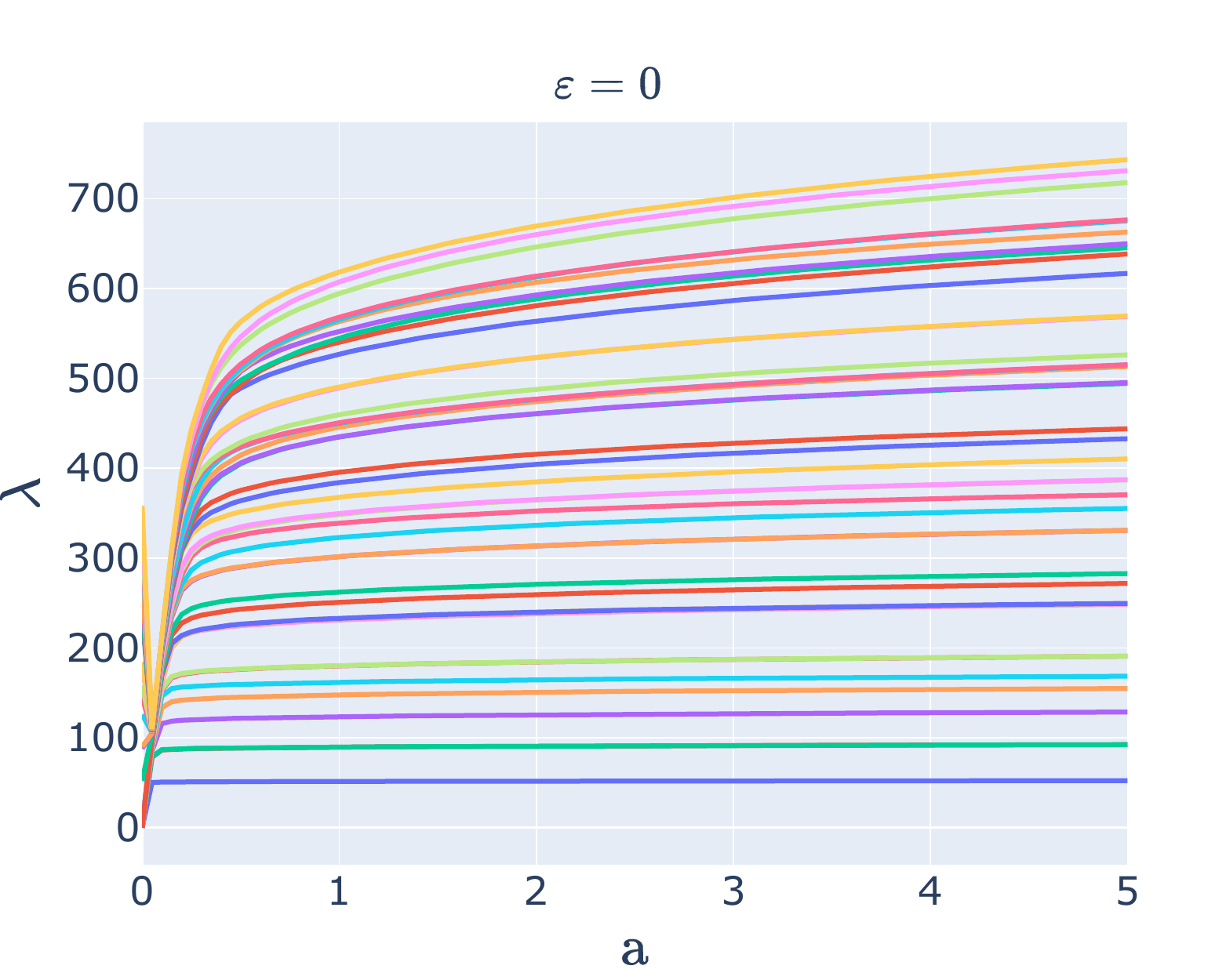}
	\end{minipage}
	\begin{minipage}{0.325\linewidth}\centering
		{\footnotesize $\varepsilon=0, \mathbb{K}^{-1}\vert_{\Omega_D}=10^3$}\\
		\includegraphics[scale=0.17,trim=0cm 0cm 1.5cm 2.5cm, clip]{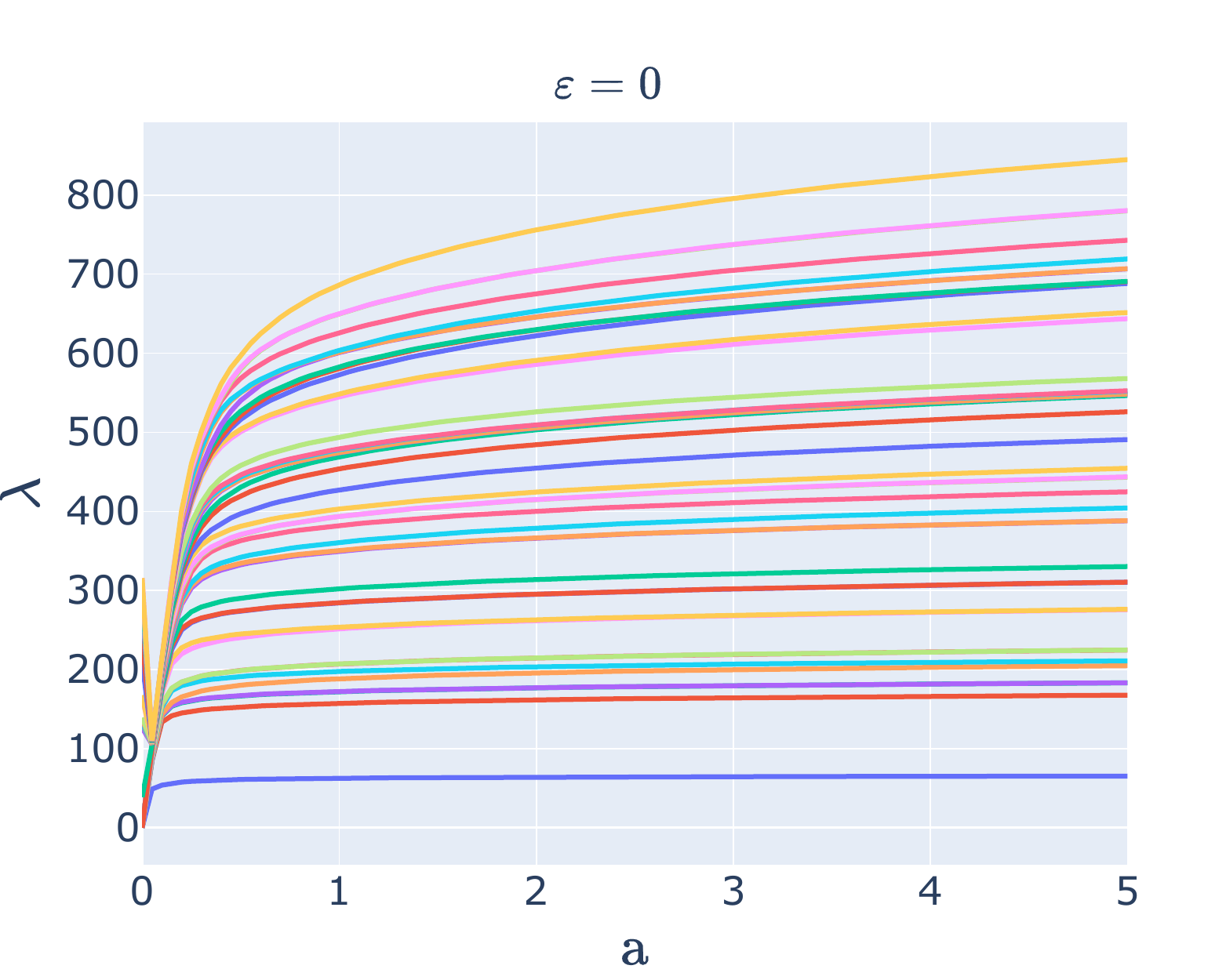}
	\end{minipage}
	\begin{minipage}{0.325\linewidth}\centering
		{\footnotesize $\varepsilon=0, \mathbb{K}^{-1}\vert_{\Omega_D}=10^5$}\\
		\includegraphics[scale=0.17,trim=0cm 0cm 1.5cm 2.5cm, clip]{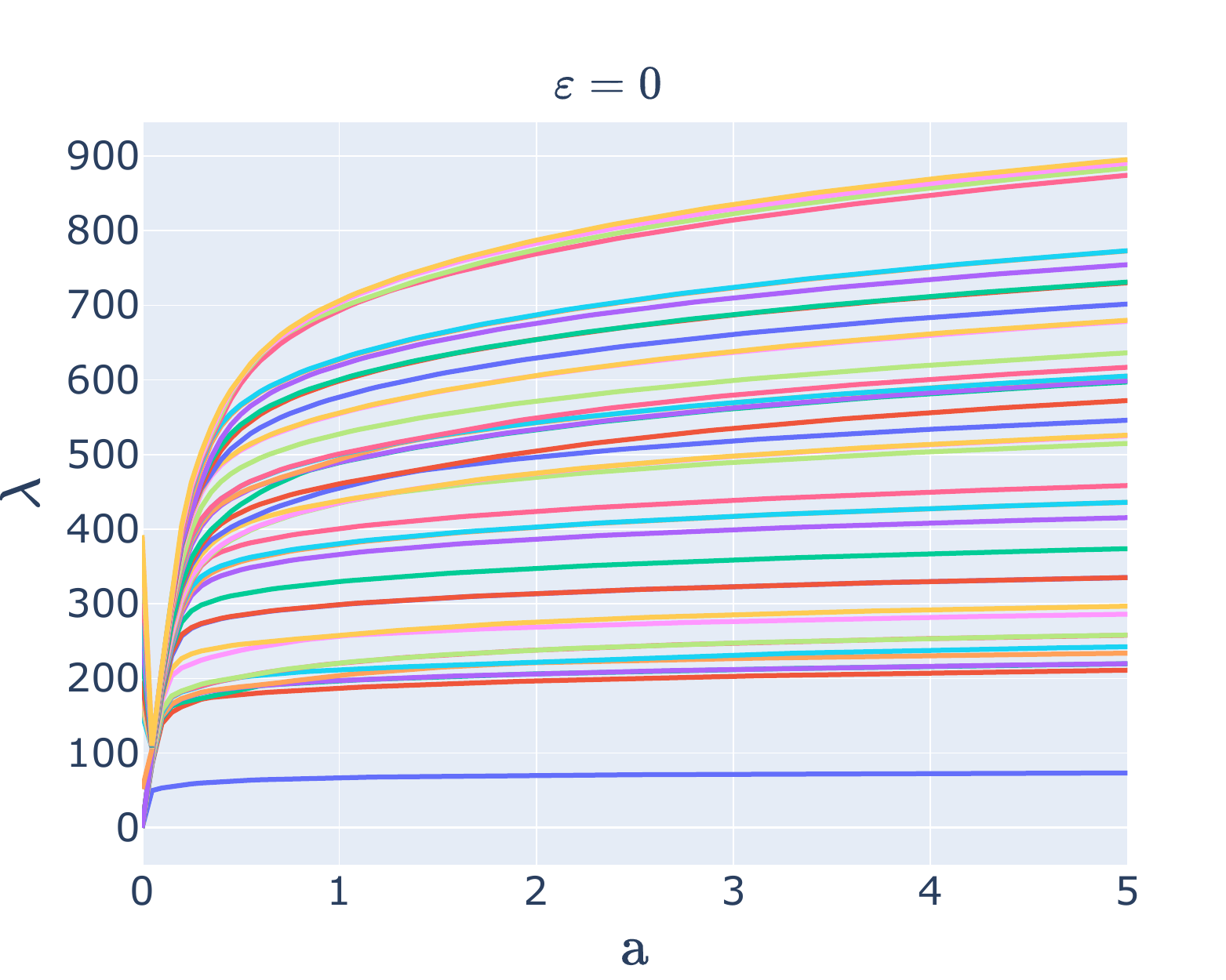}
	\end{minipage}\\
	\begin{minipage}{0.325\linewidth}\centering
		{\footnotesize $\varepsilon=-1, \mathbb{K}^{-1}\vert_{\Omega_D}=10^{-8}$}\\
		\includegraphics[scale=0.17,trim=0cm 0cm 1.5cm 2.5cm, clip]{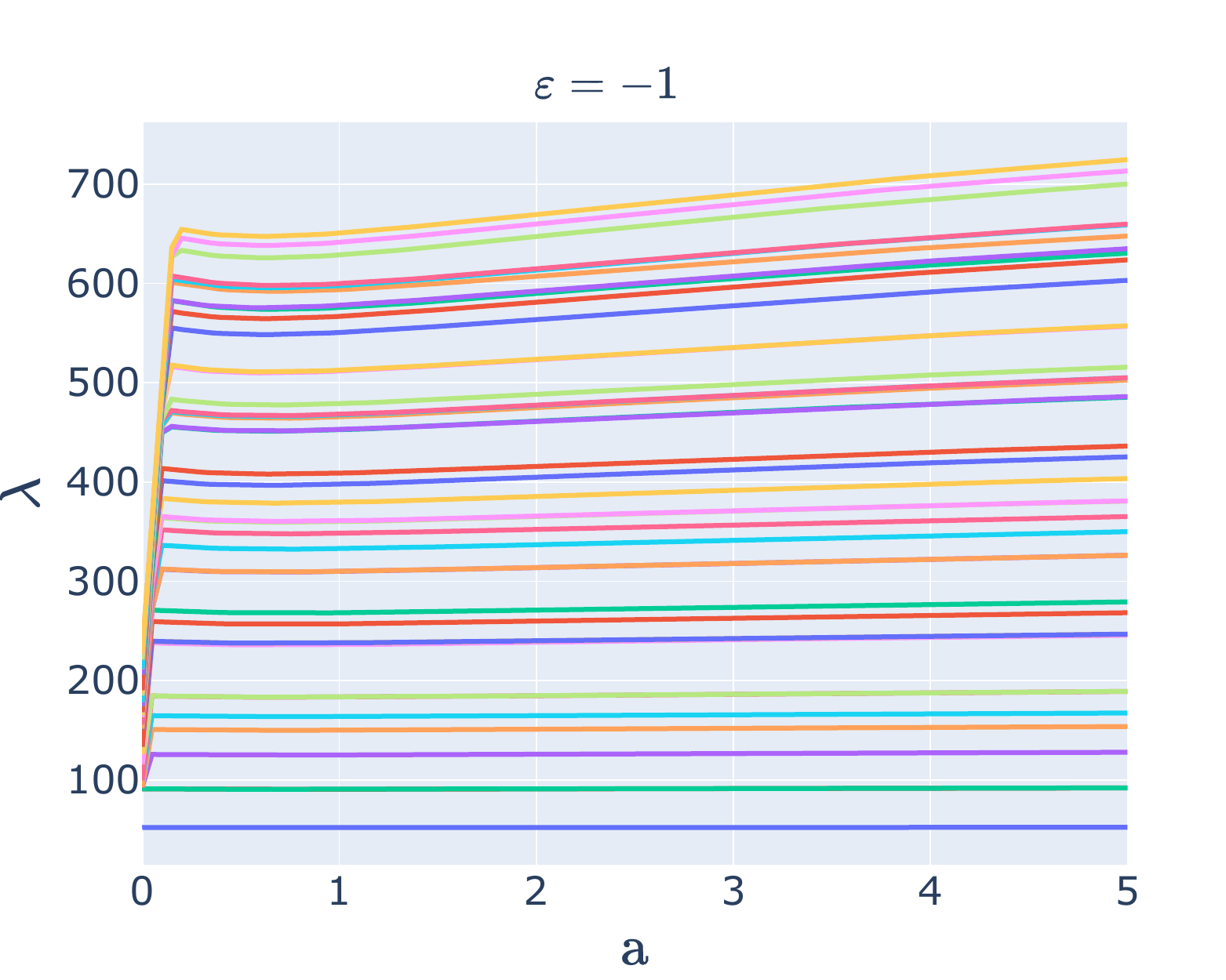}
	\end{minipage}
	\begin{minipage}{0.325\linewidth}\centering
		{\footnotesize $\varepsilon=-1, \mathbb{K}^{-1}\vert_{\Omega_D}=10^3$}\\
		\includegraphics[scale=0.17,trim=0cm 0cm 1.5cm 2.5cm, clip]{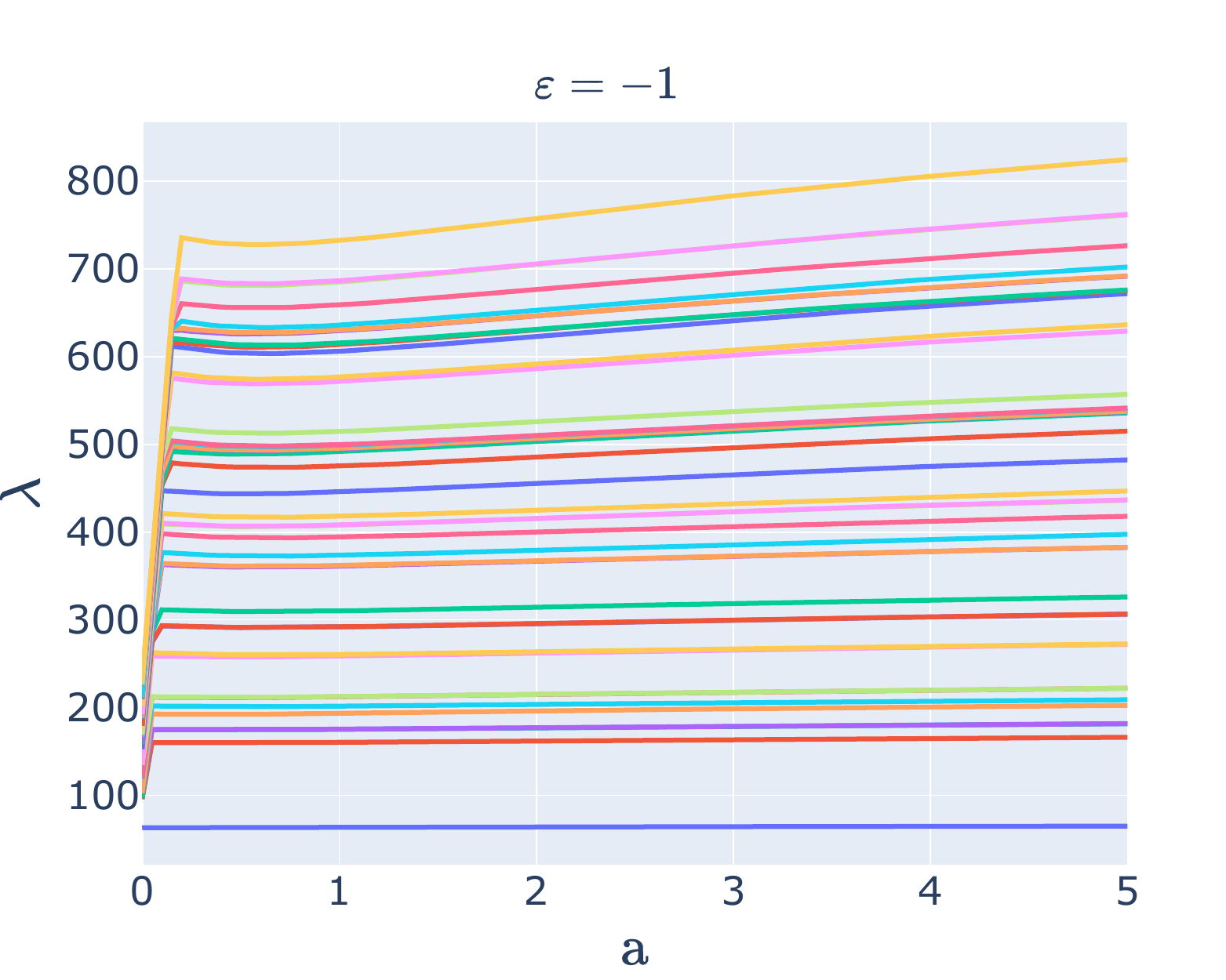}
	\end{minipage}
	\begin{minipage}{0.325\linewidth}\centering
		{\footnotesize $\varepsilon=-1, \mathbb{K}^{-1}\vert_{\Omega_D}=10^5$}\\
		\includegraphics[scale=0.17,trim=0cm 0cm 1.5cm 2.5cm, clip]{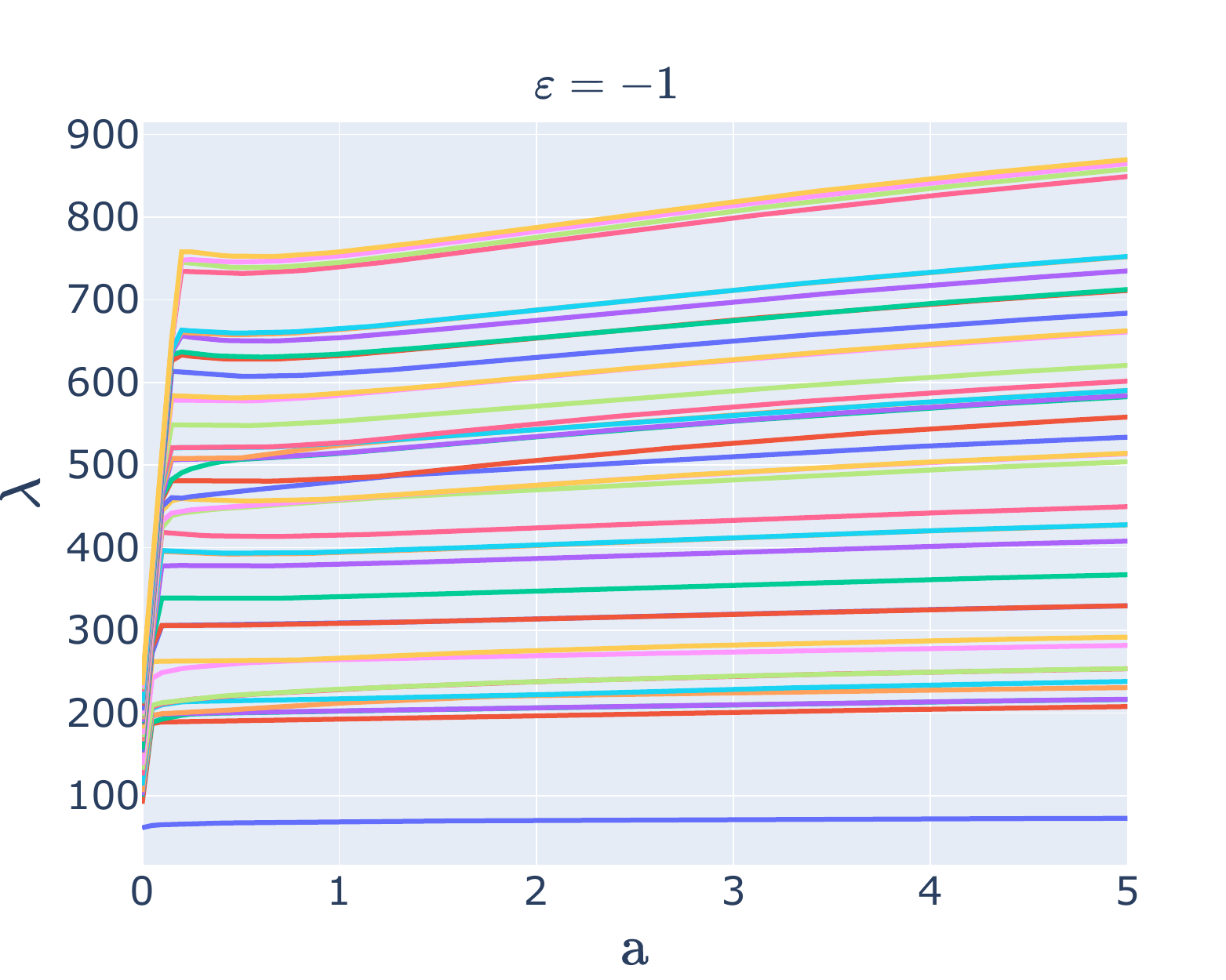}
	\end{minipage}
	\caption{Test \ref{subsec:square2D-alpha-dependence}. Dependence of the spectrum on the stabilization parameter $\mathrm{a}>0$ in the three IPDG variants ($\varepsilon \in \{1, 0, -1\}$), showing the first 40 lowest computed eigenvalues for different values of $\mathbb{K}$.}
	\label{fig:square_alpha_dependence}
\end{figure}

\begin{figure}[!hbt]\centering
\begin{minipage}{0.325\linewidth}\centering
	{\footnotesize $\mathbb{K}^{-1}\vert_{\Omega_D}=10^{-8}\mathbb{I}$}
	\includegraphics[scale=0.172, trim= 0cm 0cm 2cm 2cm,clip]{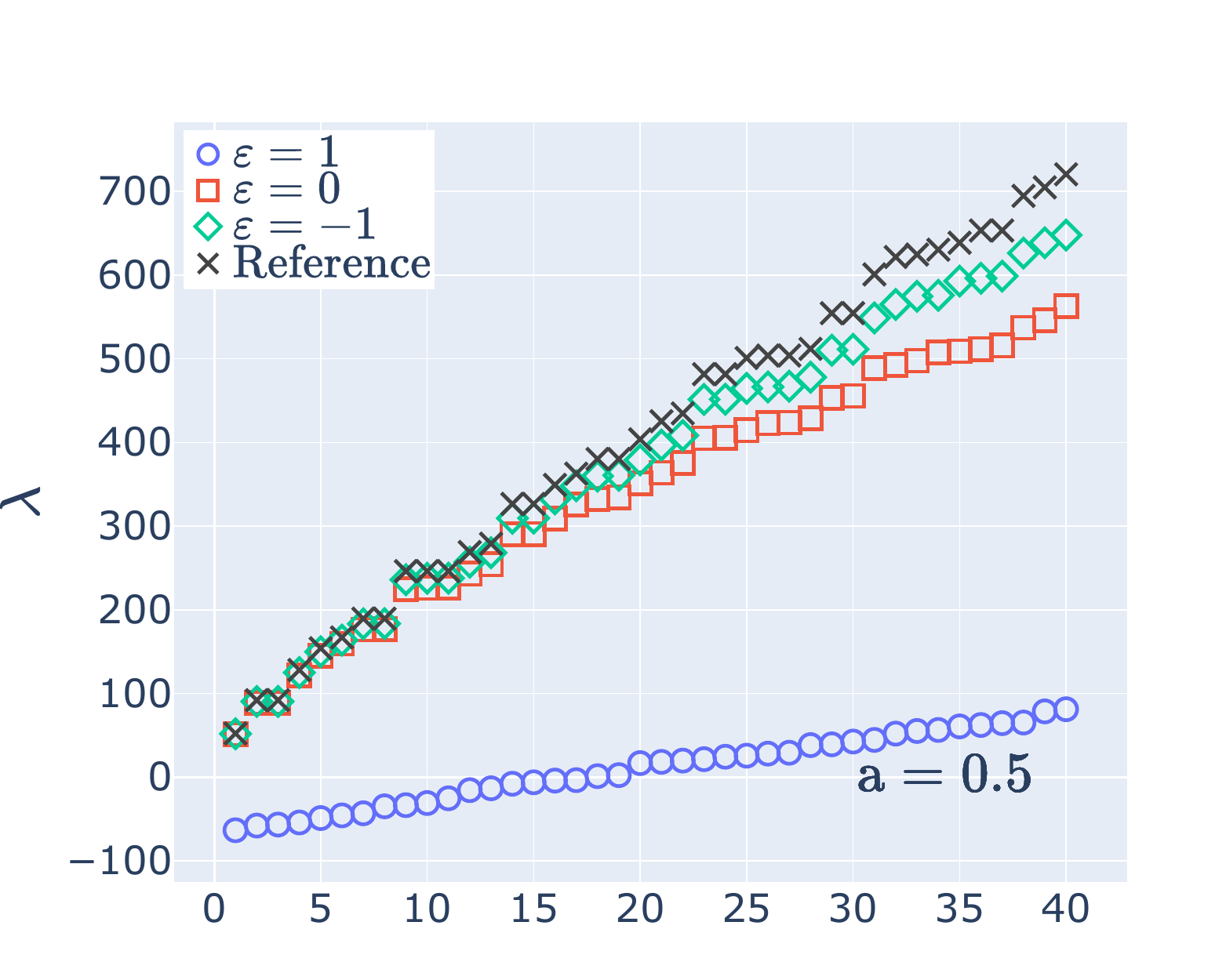}
\end{minipage}
\begin{minipage}{0.325\linewidth}\centering
	{\footnotesize $\mathbb{K}^{-1}\vert_{\Omega_D}=10^{-8}\mathbb{I}$}
	\includegraphics[scale=0.172, trim= 0cm 0cm 2cm 2cm,clip]{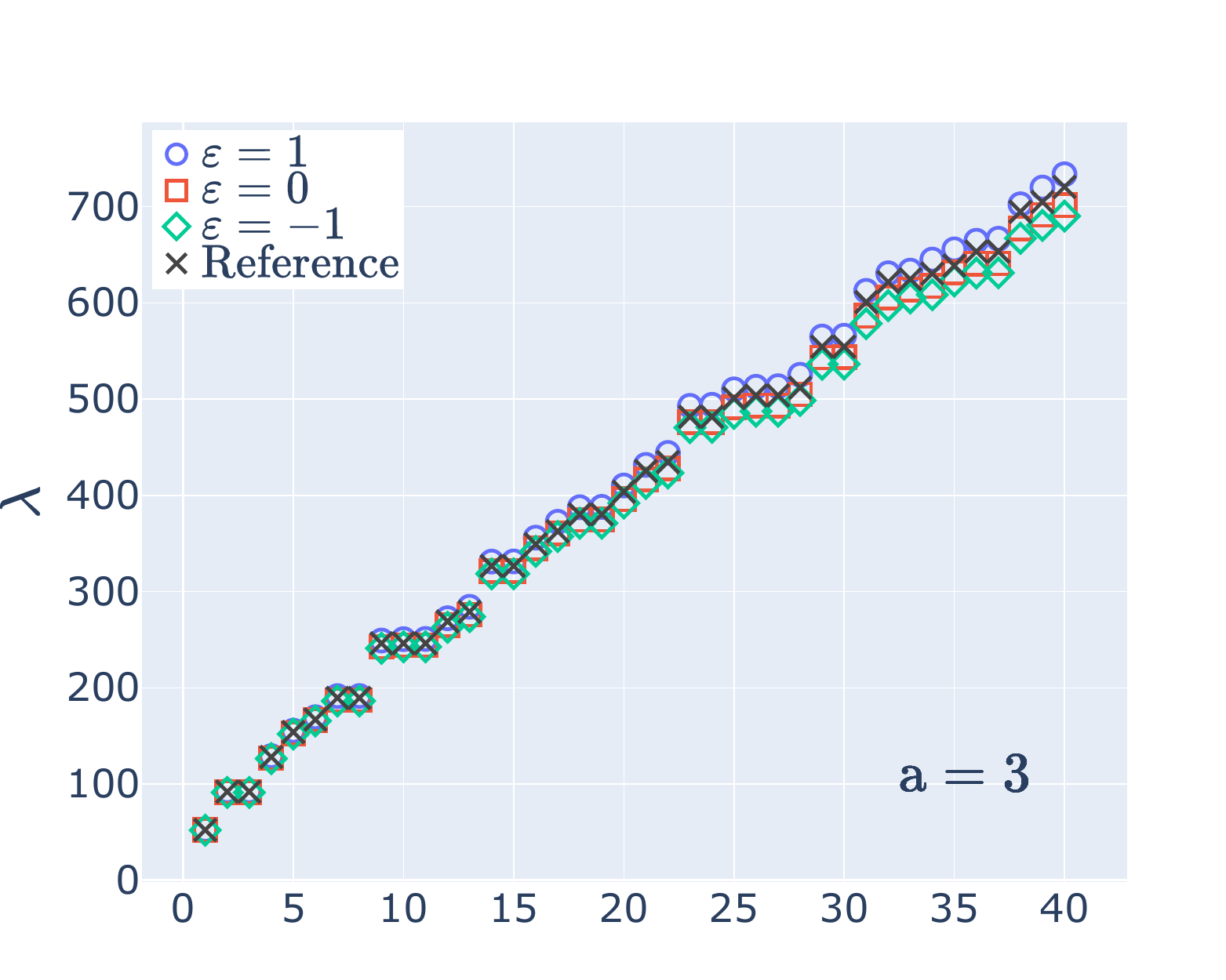}
\end{minipage}
\begin{minipage}{0.325\linewidth}\centering
	{\footnotesize $\mathbb{K}^{-1}\vert_{\Omega_D}=10^{-8}\mathbb{I}$}
	\includegraphics[scale=0.172, trim= 0cm 0cm 2cm 2cm,clip]{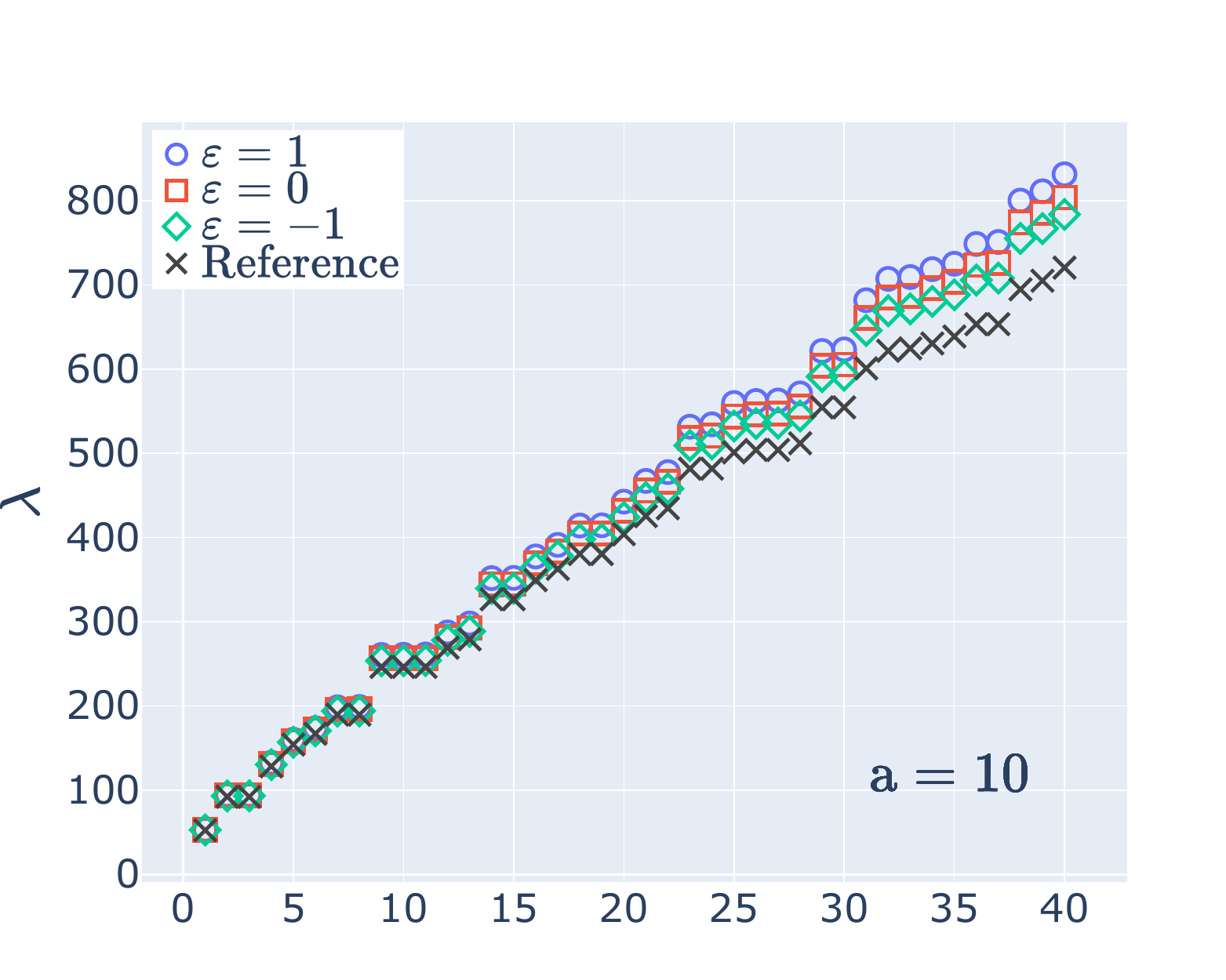}
\end{minipage}\\
\begin{minipage}{0.325\linewidth}\centering
	{\footnotesize $\mathbb{K}^{-1}\vert_{\Omega_D}=10^{3}\mathbb{I}$}
	\includegraphics[scale=0.172, trim= 0cm 0cm 2cm 2cm,clip]{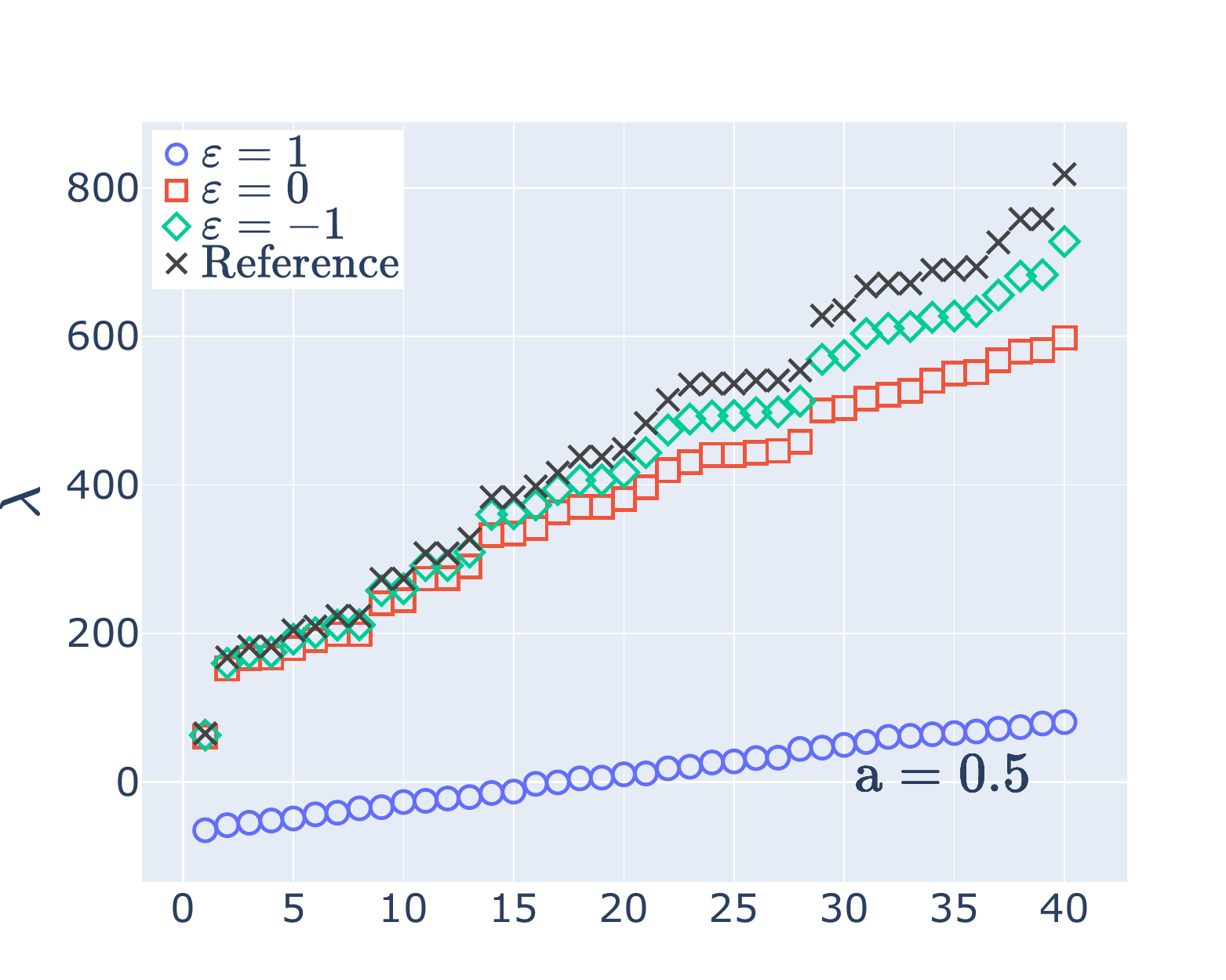}
\end{minipage}
\begin{minipage}{0.325\linewidth}\centering
	{\footnotesize $\mathbb{K}^{-1}\vert_{\Omega_D}=10^{3}\mathbb{I}$}
	\includegraphics[scale=0.172, trim= 0cm 0cm 2cm 2cm,clip]{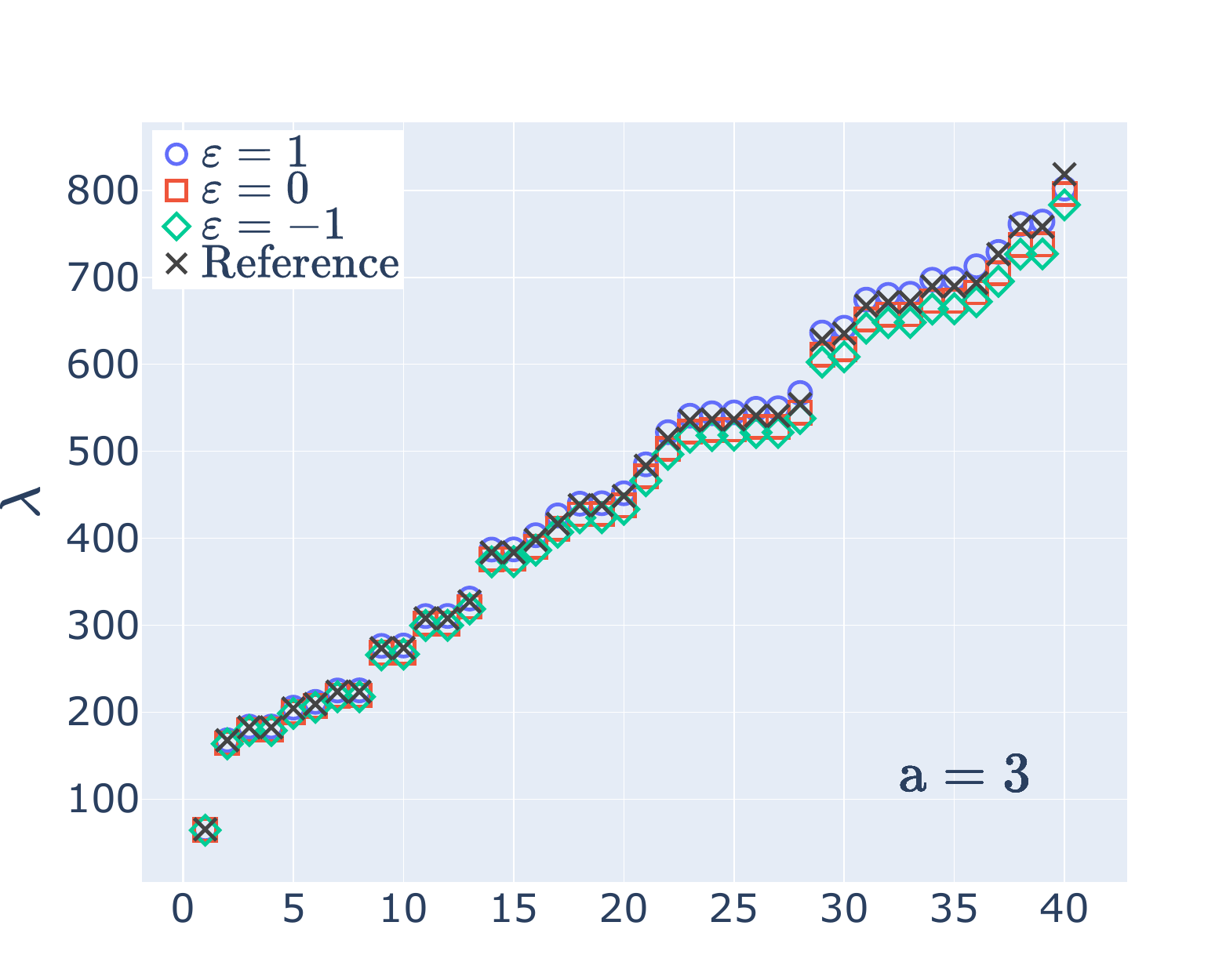}
\end{minipage}
\begin{minipage}{0.325\linewidth}\centering
	{\footnotesize $\mathbb{K}^{-1}\vert_{\Omega_D}=10^{3}\mathbb{I}$}
	\includegraphics[scale=0.172, trim= 0cm 0cm 2cm 2cm,clip]{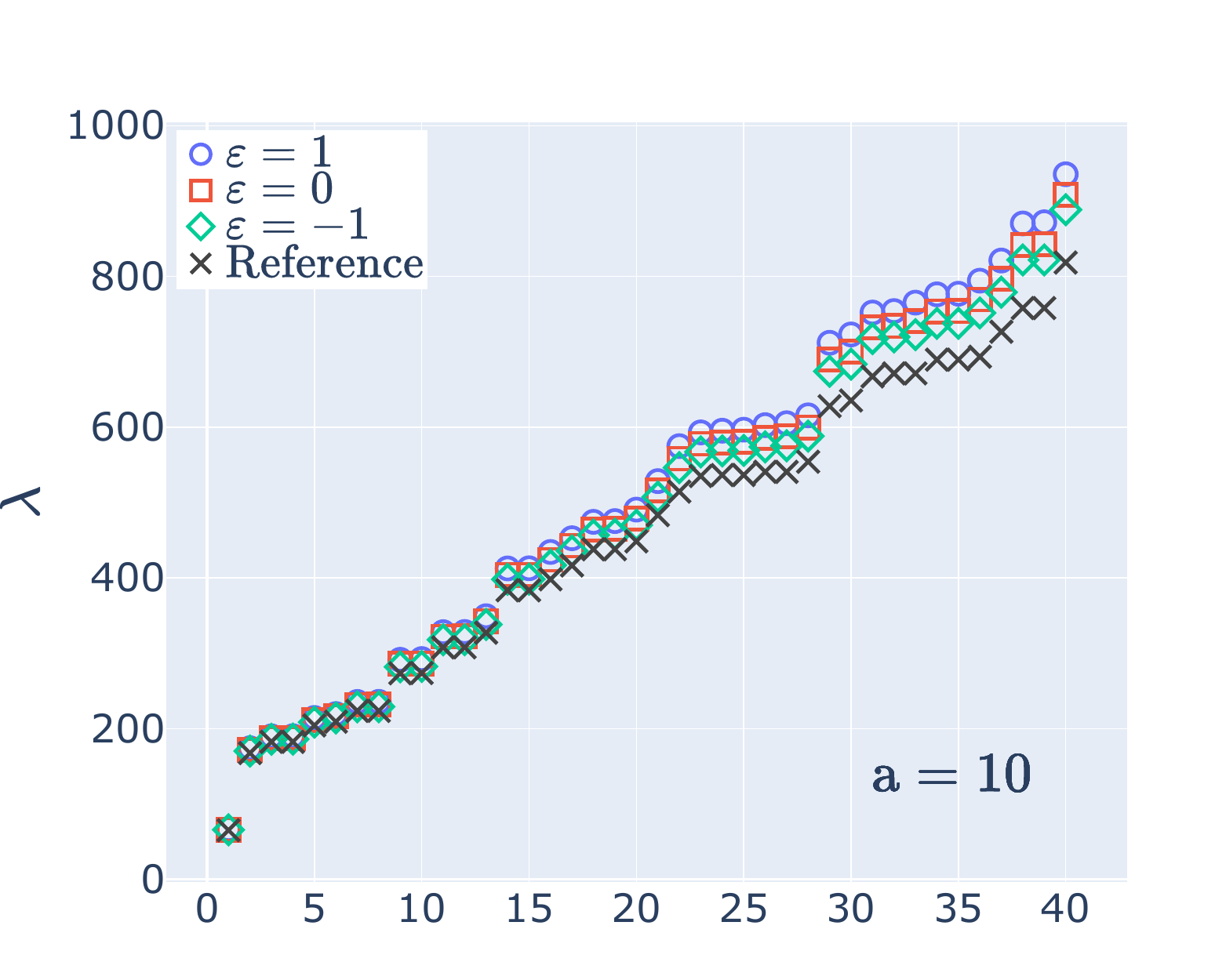}
\end{minipage}\\
\begin{minipage}{0.325\linewidth}\centering
	{\footnotesize $\mathbb{K}^{-1}\vert_{\Omega_D}=10^{5}\mathbb{I}$}
	\includegraphics[scale=0.172, trim= 0cm 0cm 2cm 2cm,clip]{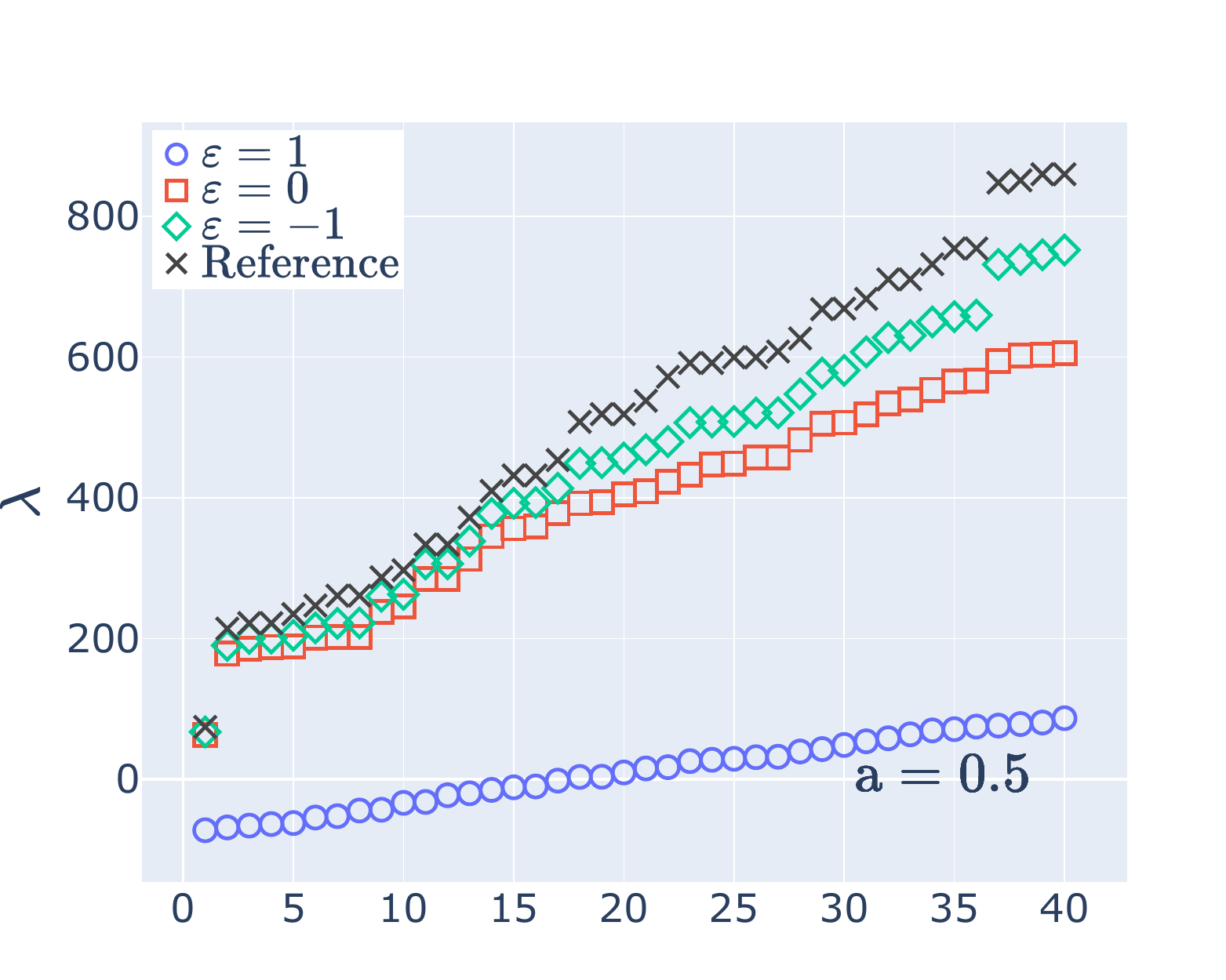}
\end{minipage}
\begin{minipage}{0.325\linewidth}\centering
	{\footnotesize $\mathbb{K}^{-1}\vert_{\Omega_D}=10^{5}\mathbb{I}$}
	\includegraphics[scale=0.172, trim= 0cm 0cm 2cm 2cm,clip]{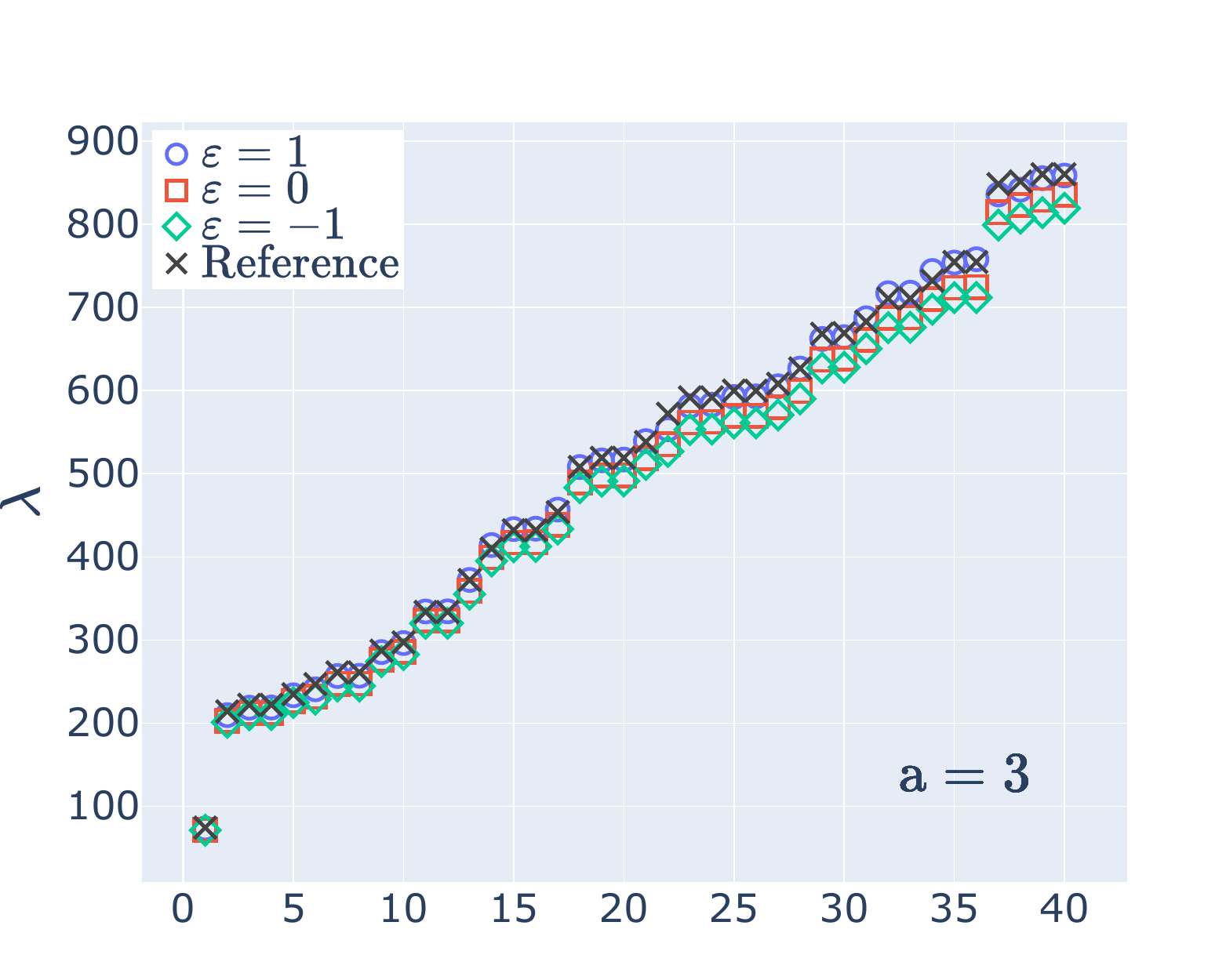}
\end{minipage}
\begin{minipage}{0.325\linewidth}\centering
	{\footnotesize $\mathbb{K}^{-1}\vert_{\Omega_D}=10^{5}\mathbb{I}$}
	\includegraphics[scale=0.172, trim= 0cm 0cm 2cm 2cm,clip]{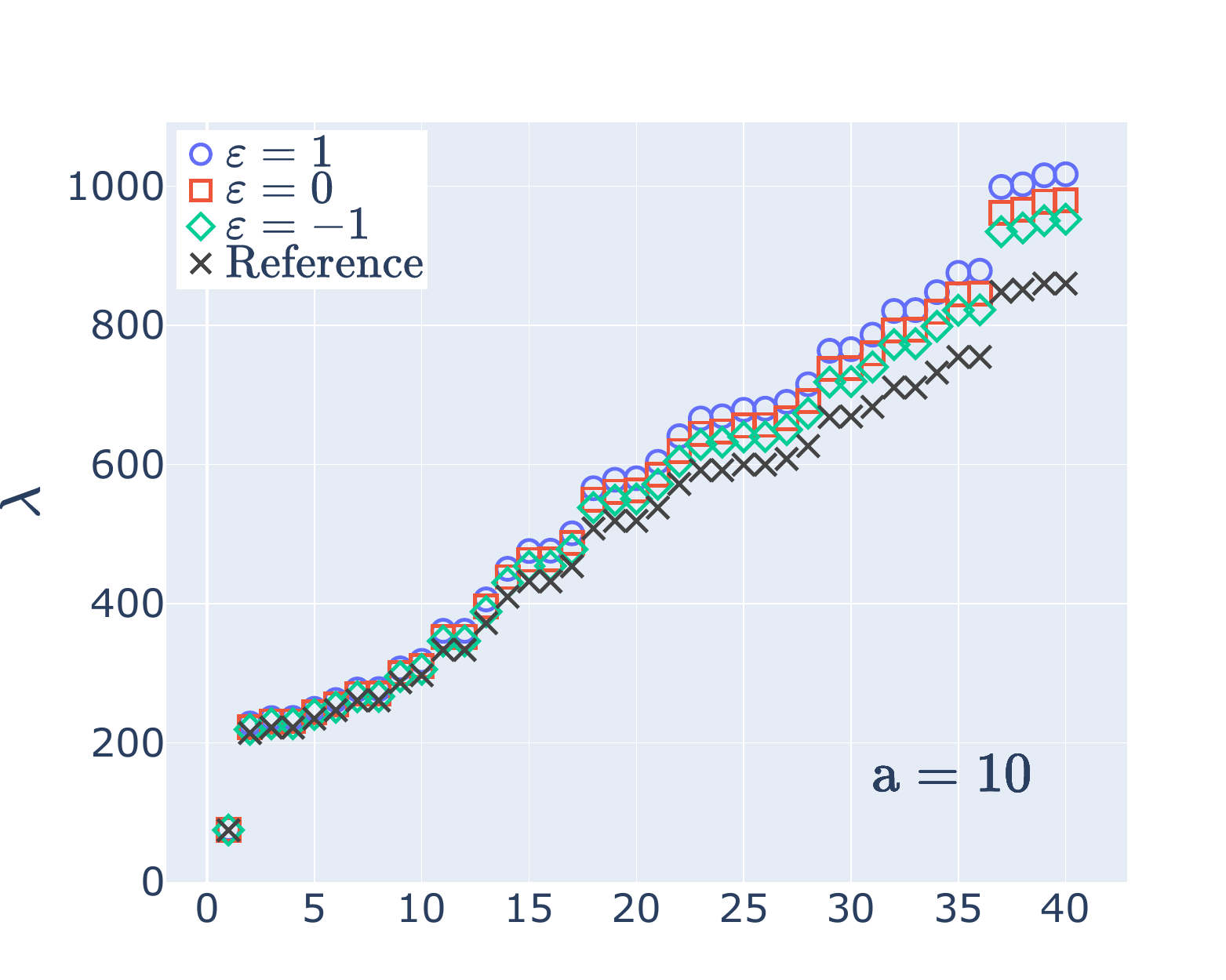}
\end{minipage}
\caption{Test \ref{subsec:square2D-alpha-dependence}. Comparison of the first 40 lowest eigenvalues calculated with the three DG variants against the reference values from \cite{lepe2025jsc}}. 
\label{fig:square_alfas_10em8}
\end{figure}

\begin{figure}[!hbt]\centering
	\begin{minipage}{0.32\linewidth}\centering
		{\footnotesize $\varepsilon=1, k=1$}\\
		\includegraphics[scale=0.217, trim= 0cm 0cm 2cm 2cm,clip]{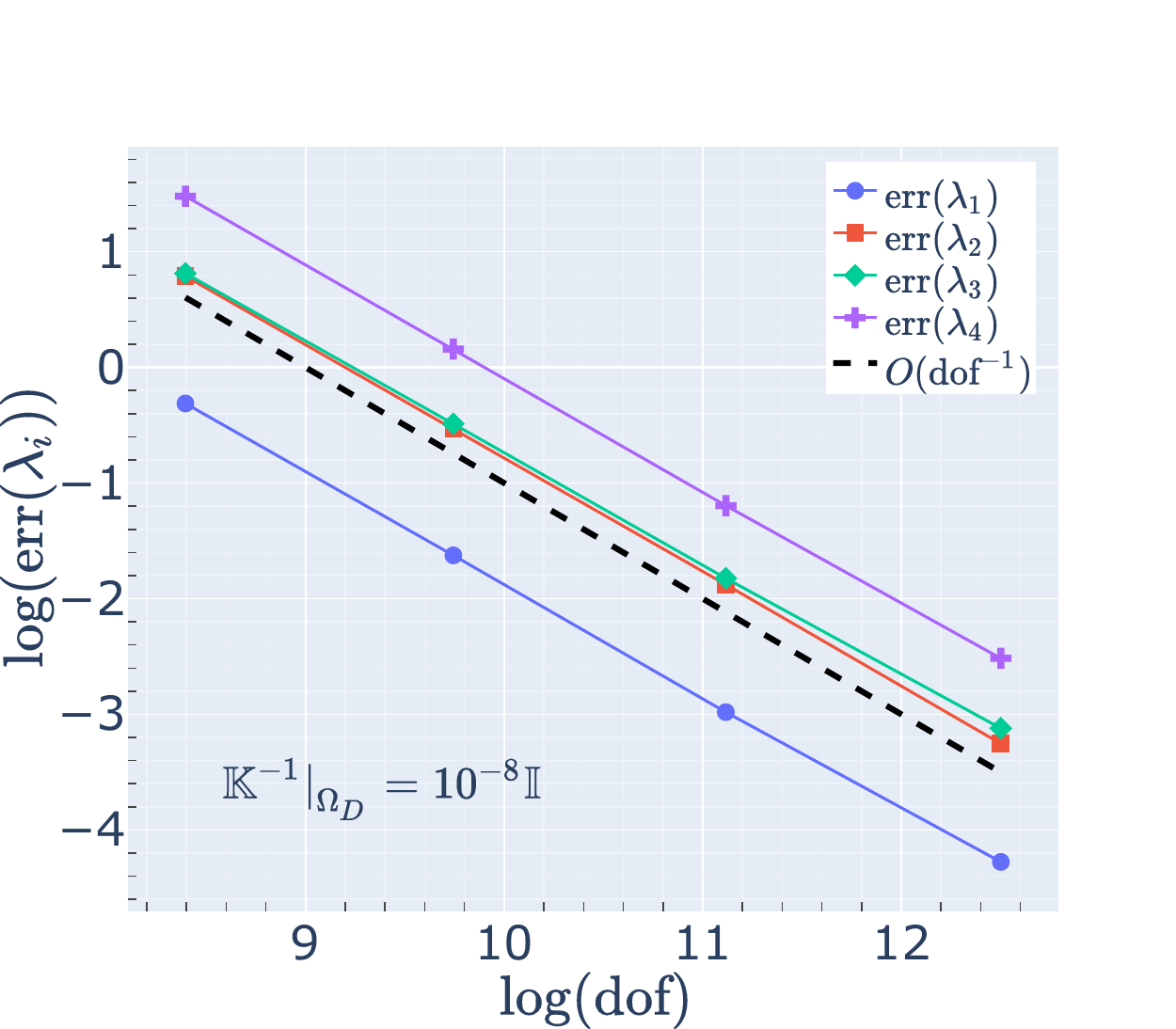}
	\end{minipage}
	\begin{minipage}{0.32\linewidth}\centering
		{\footnotesize $\varepsilon=0, k=1$}\\
		\includegraphics[scale=0.217, trim= 0cm 0cm 2cm 2cm,clip]{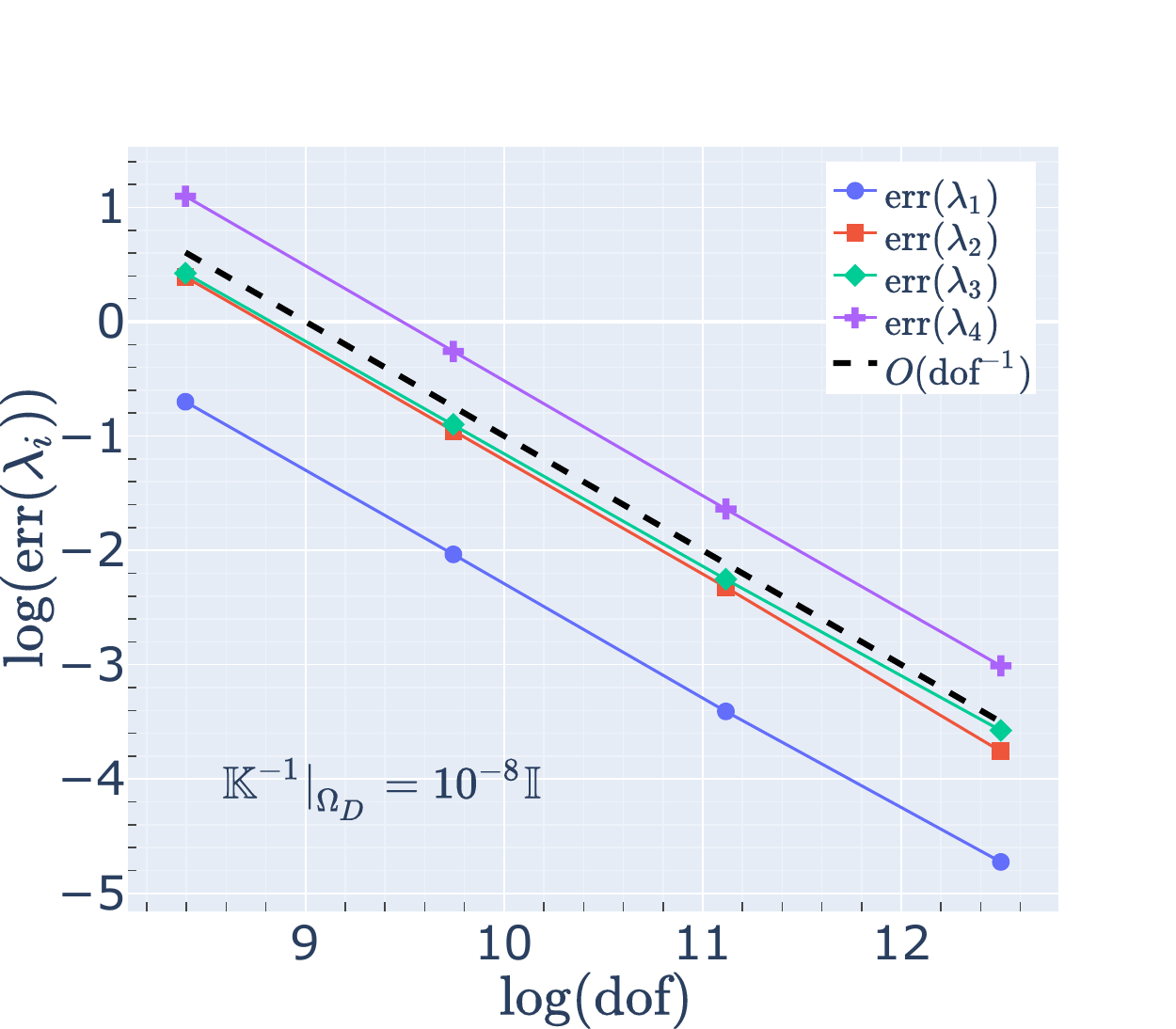}
	\end{minipage}
	\begin{minipage}{0.32\linewidth}\centering
		{\footnotesize $\varepsilon=-1, k=1$}\\
		\includegraphics[scale=0.217, trim= 0cm 0cm 2cm 2cm,clip]{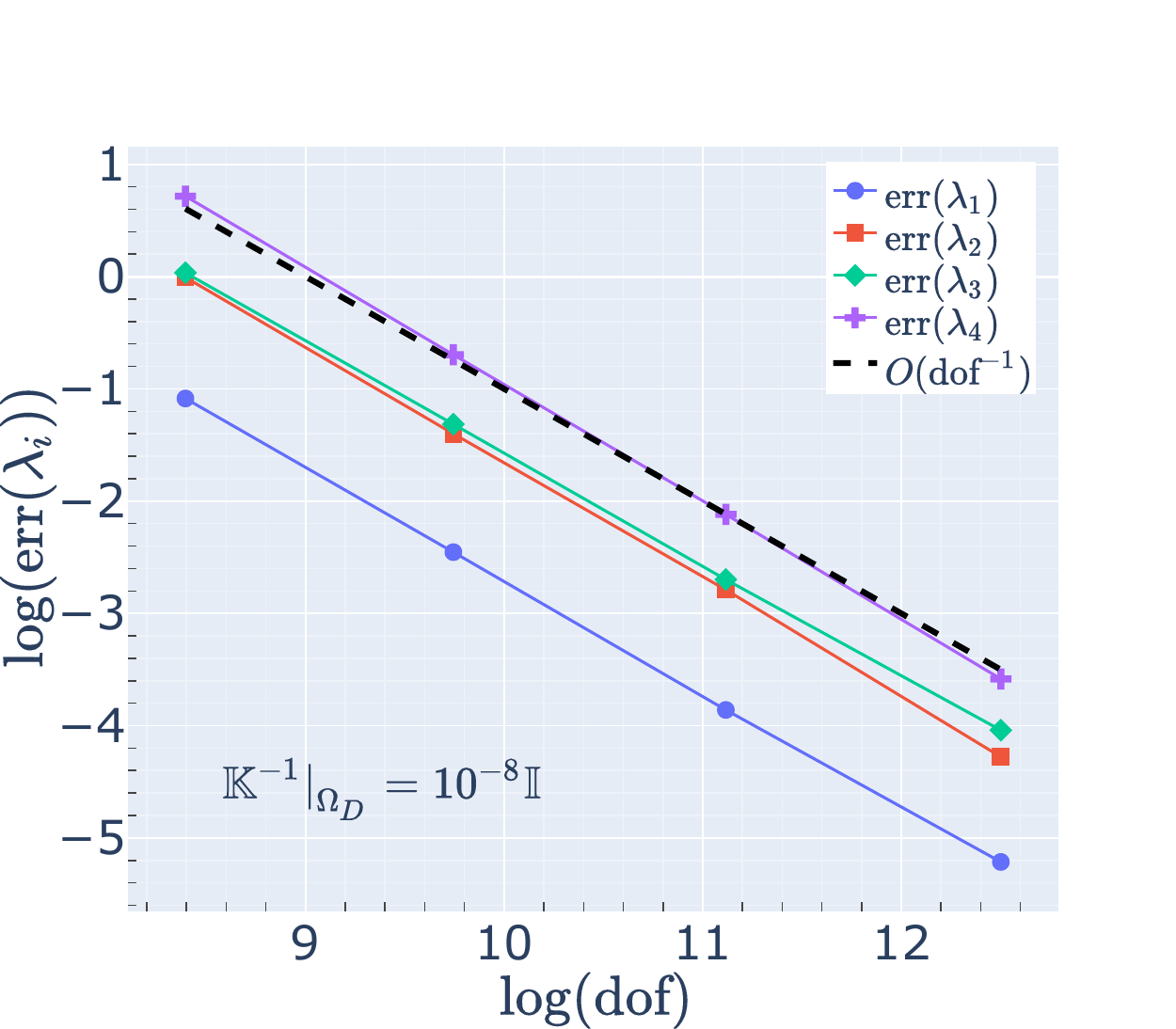}
	\end{minipage}\\
\begin{minipage}{0.32\linewidth}\centering
{\footnotesize $\varepsilon=1, k=2$}\\
\includegraphics[scale=0.217, trim= 0cm 0cm 2cm 2cm,clip]{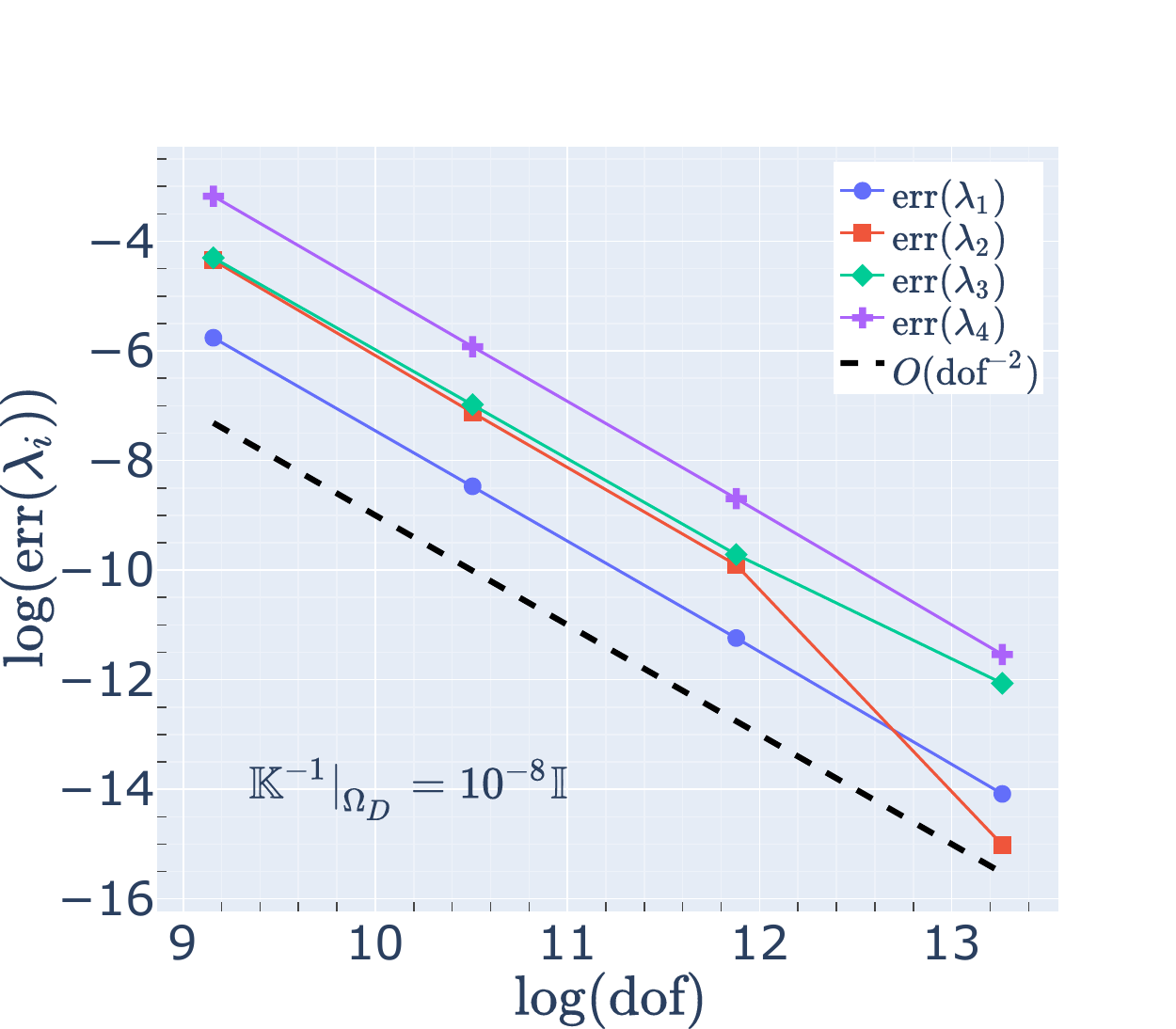}
\end{minipage}
\begin{minipage}{0.32\linewidth}\centering
	{\footnotesize $\varepsilon=0, k=2$}\\
	\includegraphics[scale=0.217, trim= 0cm 0cm 2cm 2cm,clip]{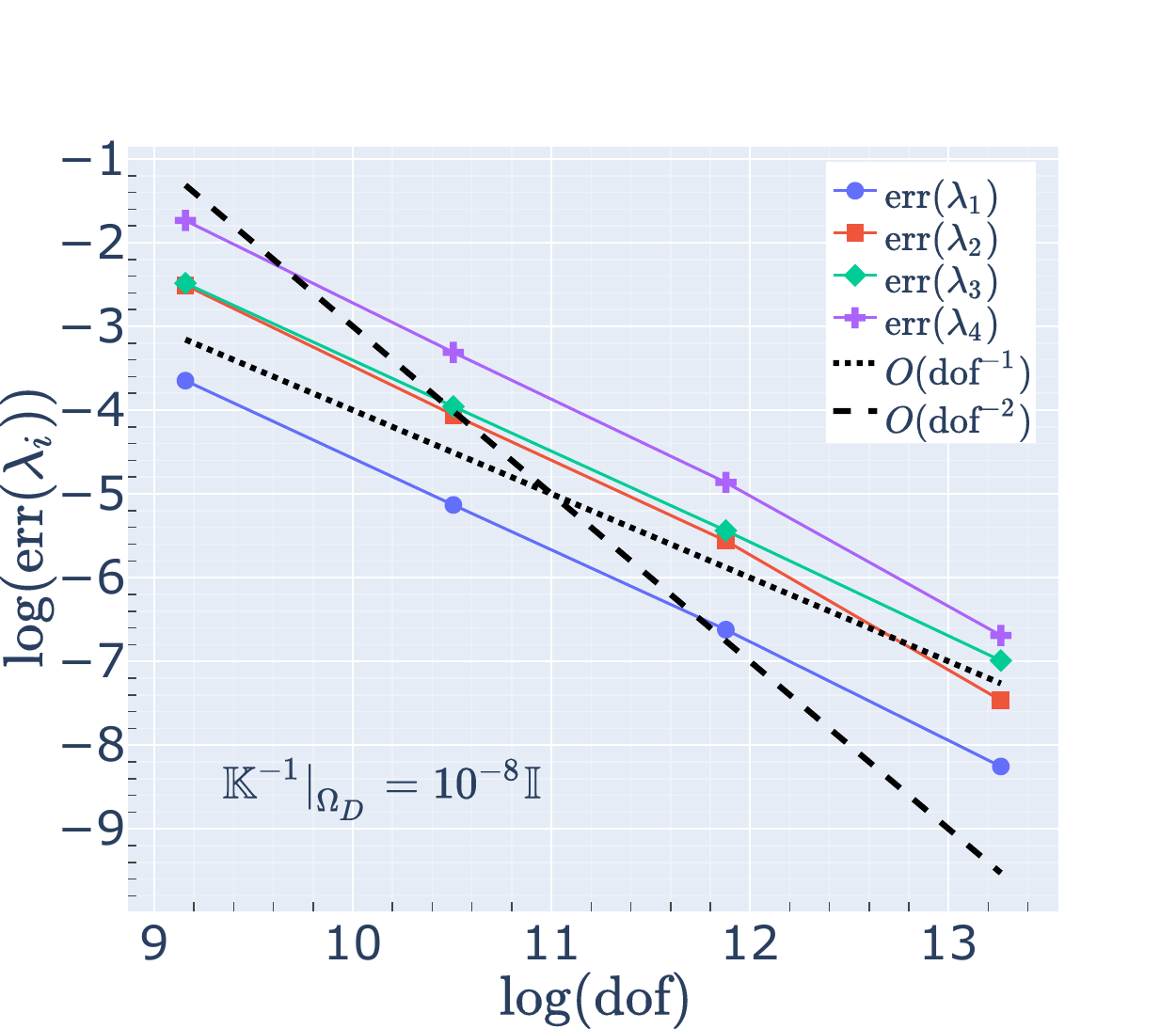}
\end{minipage}
\begin{minipage}{0.32\linewidth}\centering
	{\footnotesize $\varepsilon=-1, k=2$}\\
	\includegraphics[scale=0.217, trim= 0cm 0cm 2cm 2cm,clip]{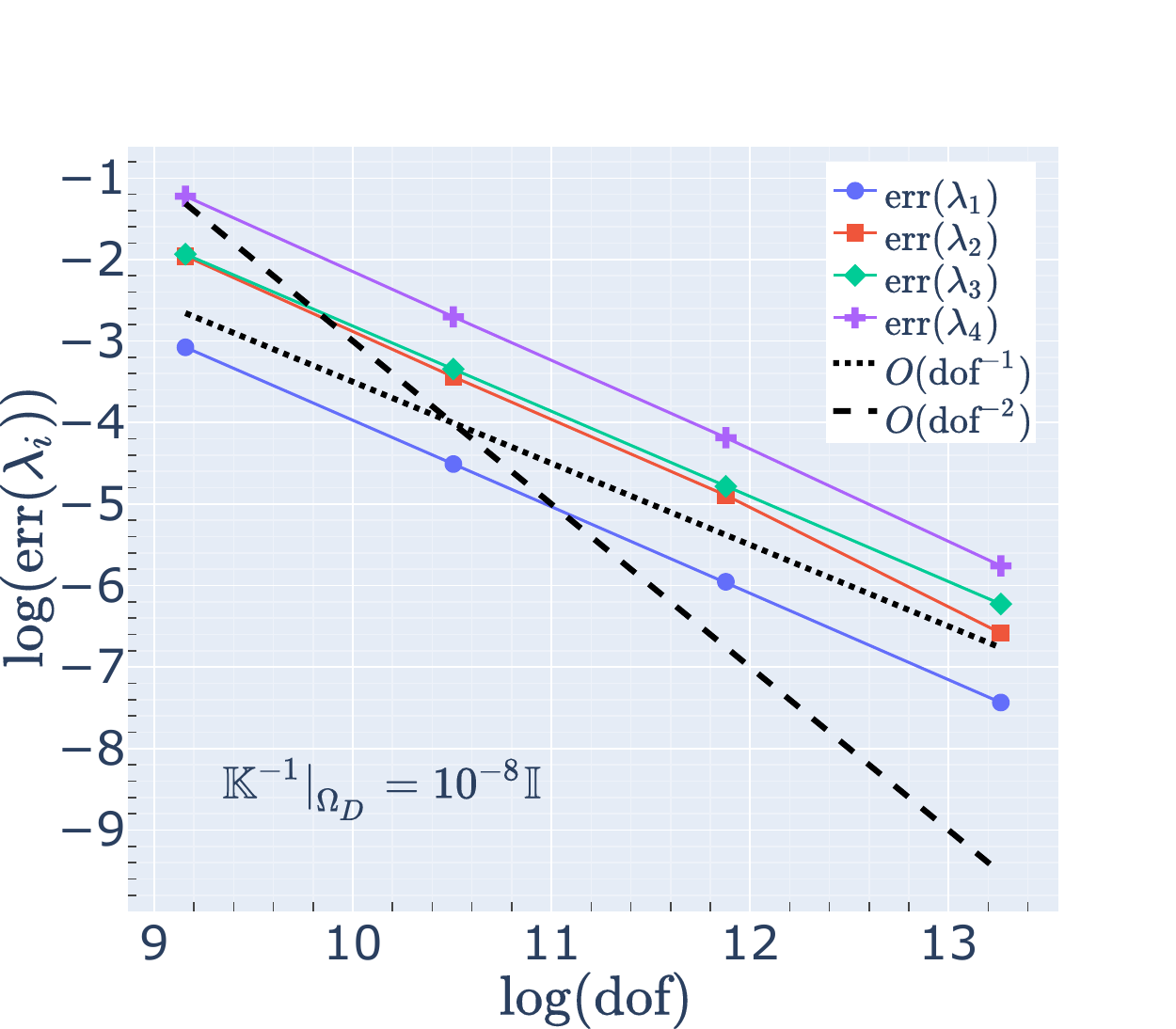}
\end{minipage}\\
\begin{minipage}{0.32\linewidth}\centering
{\footnotesize $\varepsilon=1, k=3$}\\
\includegraphics[scale=0.217, trim= 0cm 0cm 2cm 2cm,clip]{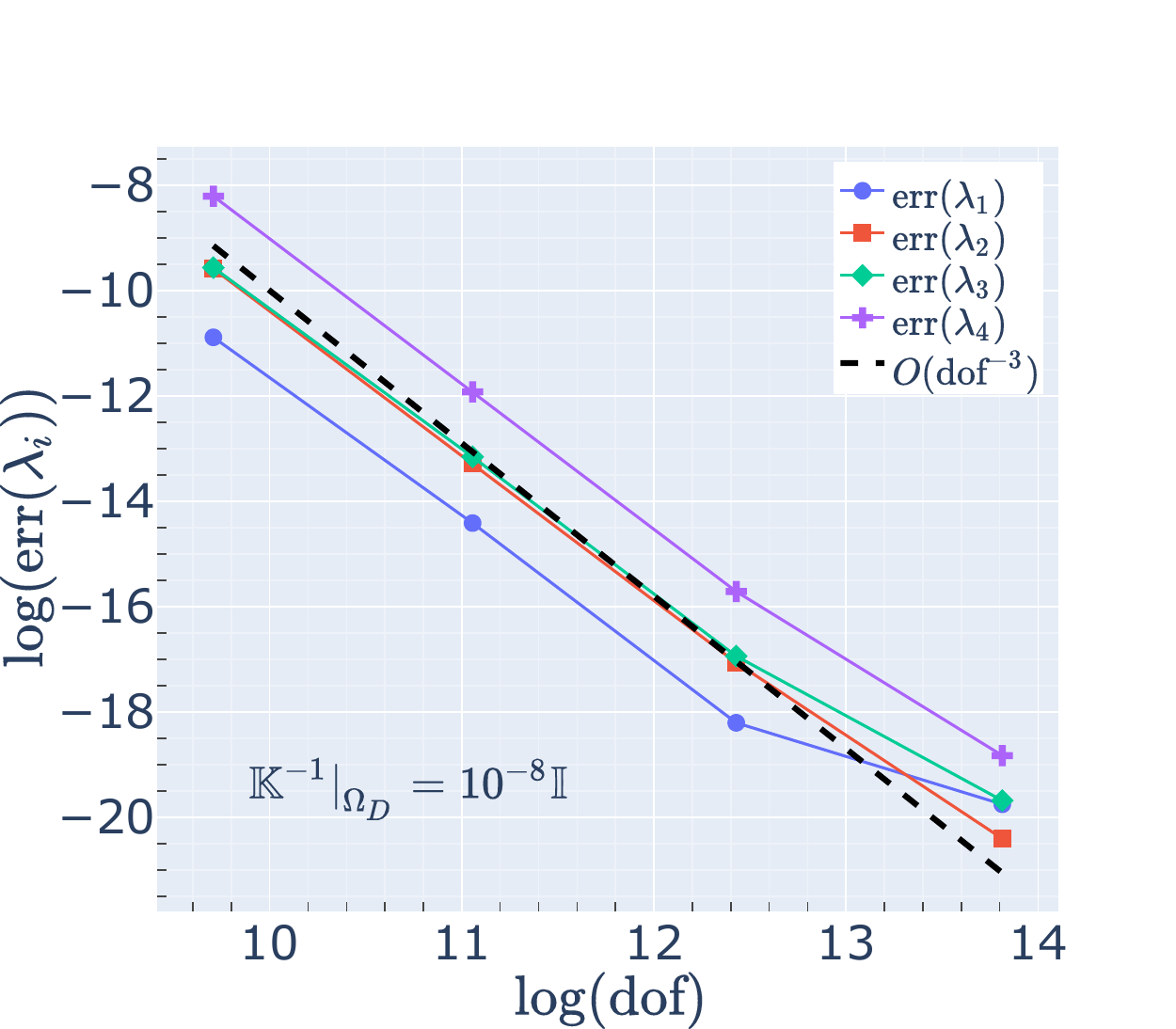}
\end{minipage}
\begin{minipage}{0.32\linewidth}\centering
{\footnotesize $\varepsilon=0, k=3$}\\
\includegraphics[scale=0.217, trim= 0cm 0cm 2cm 2cm,clip]{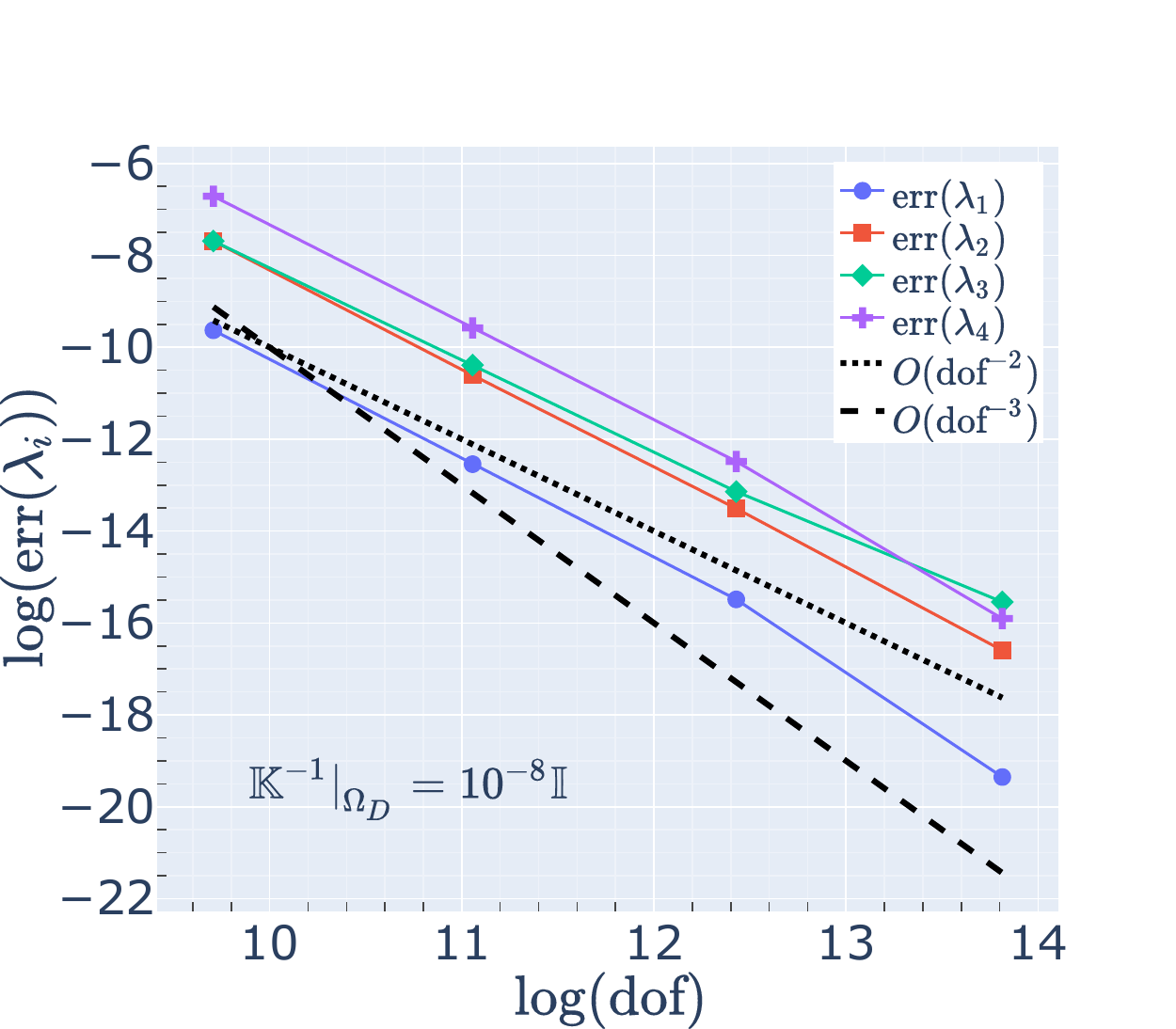}
\end{minipage}
\begin{minipage}{0.32\linewidth}\centering
{\footnotesize $\varepsilon=-1, k=3$}\\
\includegraphics[scale=0.217, trim= 0cm 0cm 2cm 2cm,clip]{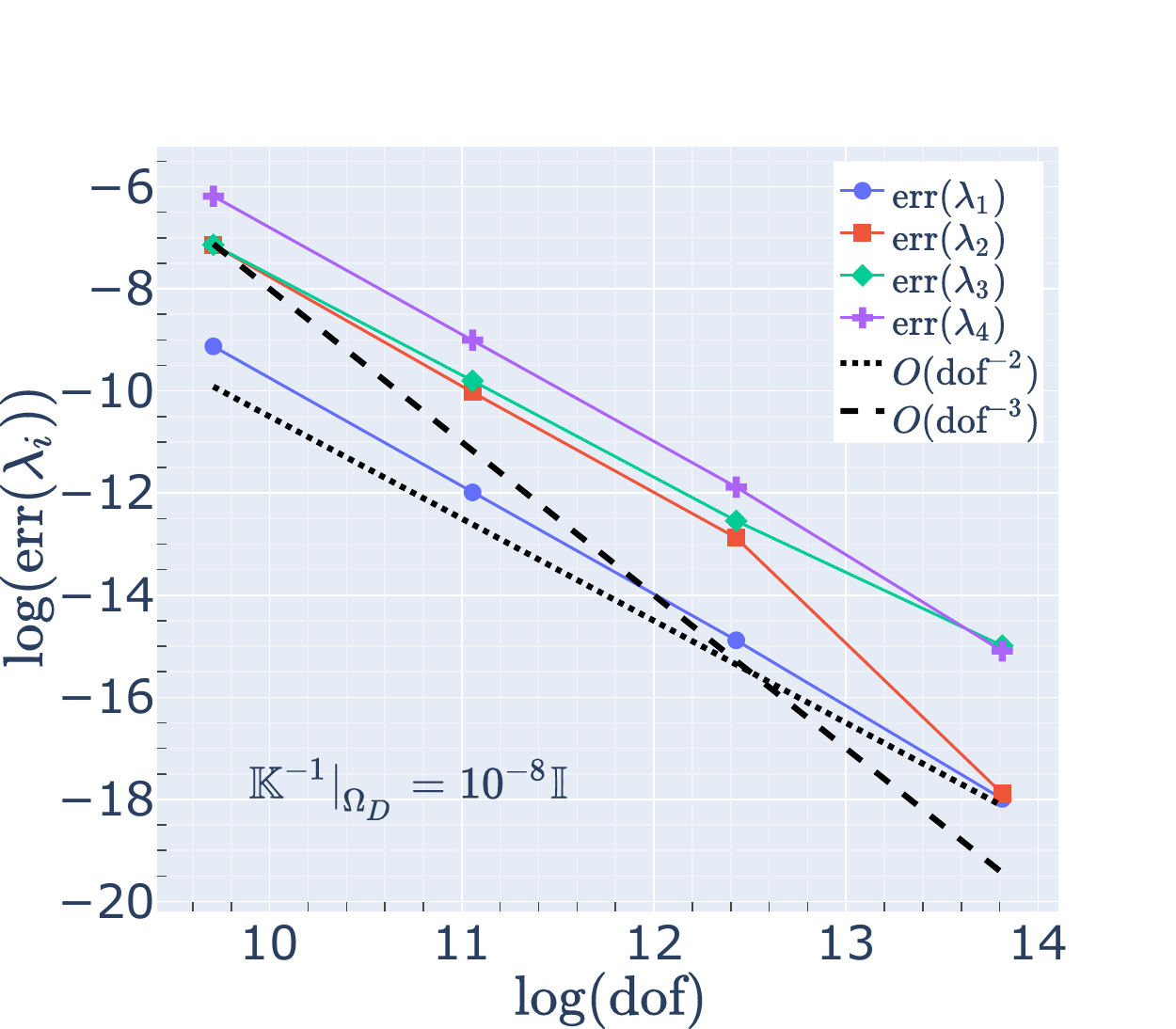}
\end{minipage}
\caption{Test \ref{subsec:square2D_convergence}. Convergence history for the first four eigenvalues on each IPDG scheme for a selected stability parameter $\texttt{a}=10k^2$ and $\mathbb{K}^{-1}\vert_{\Omega_D}=10^{-8}$.}
\label{fig:square_error_1em8}
\end{figure}

\begin{figure}[!hbt]\centering
	\begin{minipage}{0.32\linewidth}\centering
		{\footnotesize $\varepsilon=1, k=1$}\\
		\includegraphics[scale=0.217, trim= 0cm 0cm 2cm 2cm,clip]{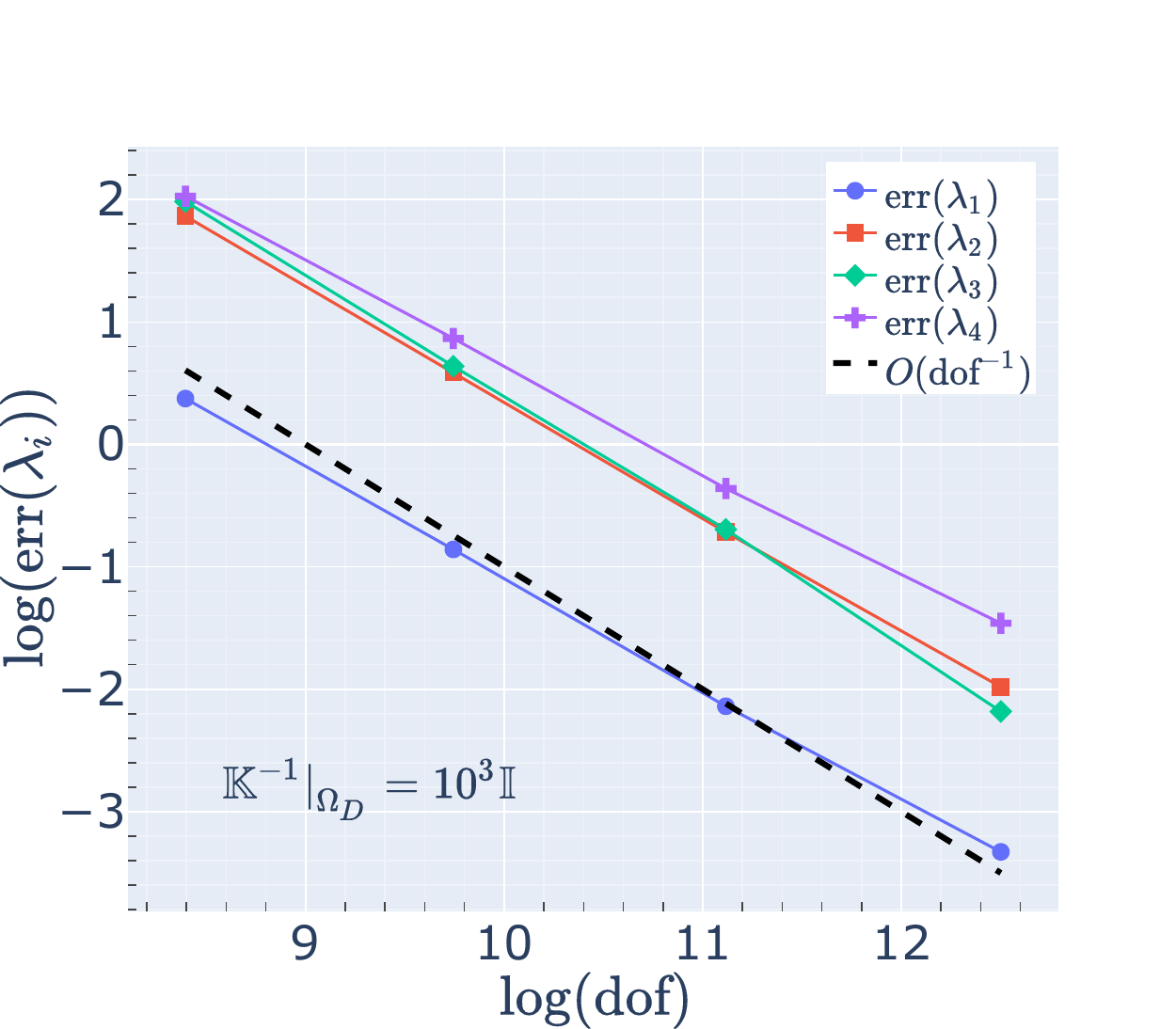}
	\end{minipage}
	\begin{minipage}{0.32\linewidth}\centering
		{\footnotesize $\varepsilon=0, k=1$}\\
		\includegraphics[scale=0.217, trim= 0cm 0cm 2cm 2cm,clip]{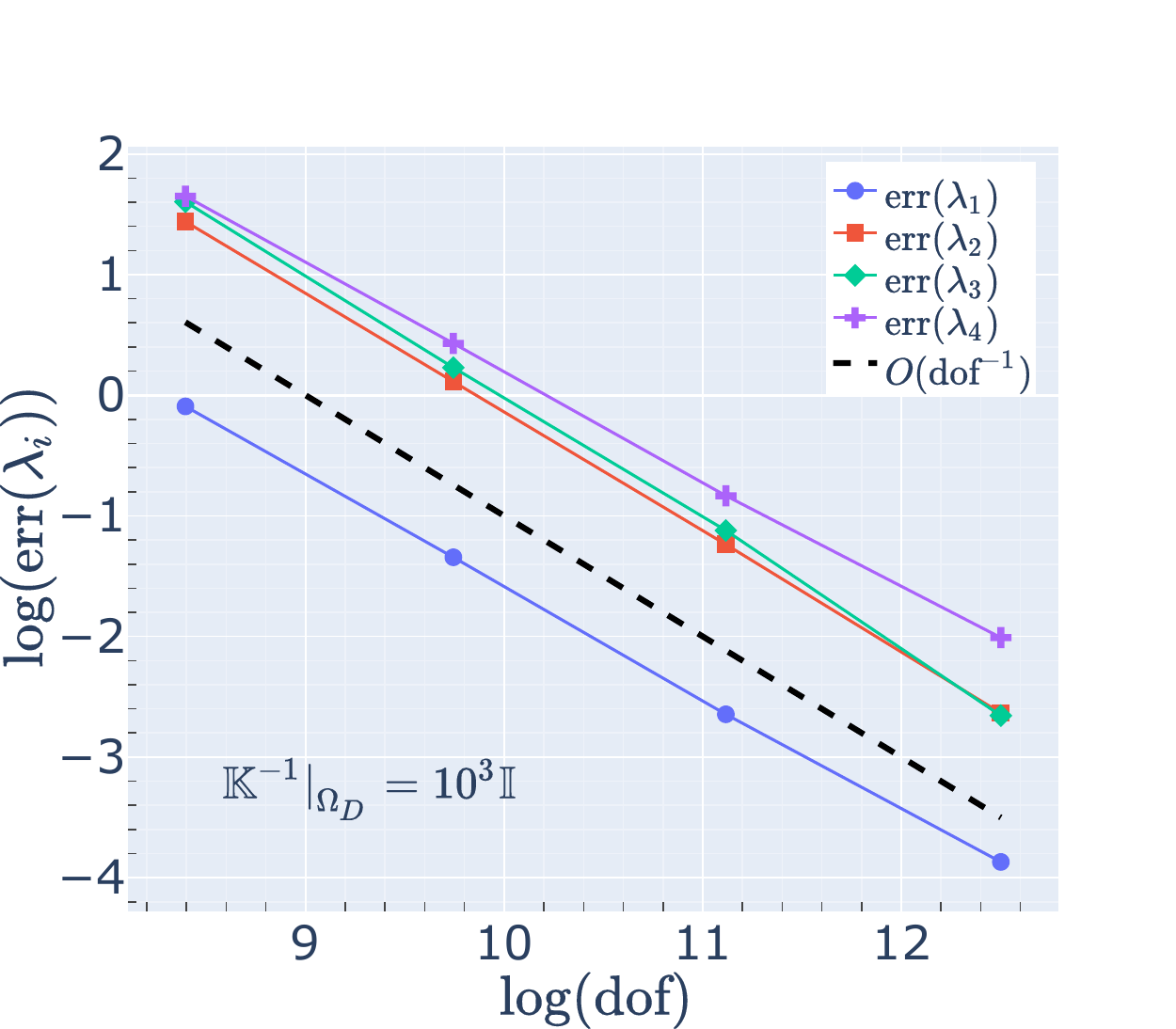}
	\end{minipage}
	\begin{minipage}{0.32\linewidth}\centering
		{\footnotesize $\varepsilon=-1, k=1$}\\
		\includegraphics[scale=0.217, trim= 0cm 0cm 2cm 2cm,clip]{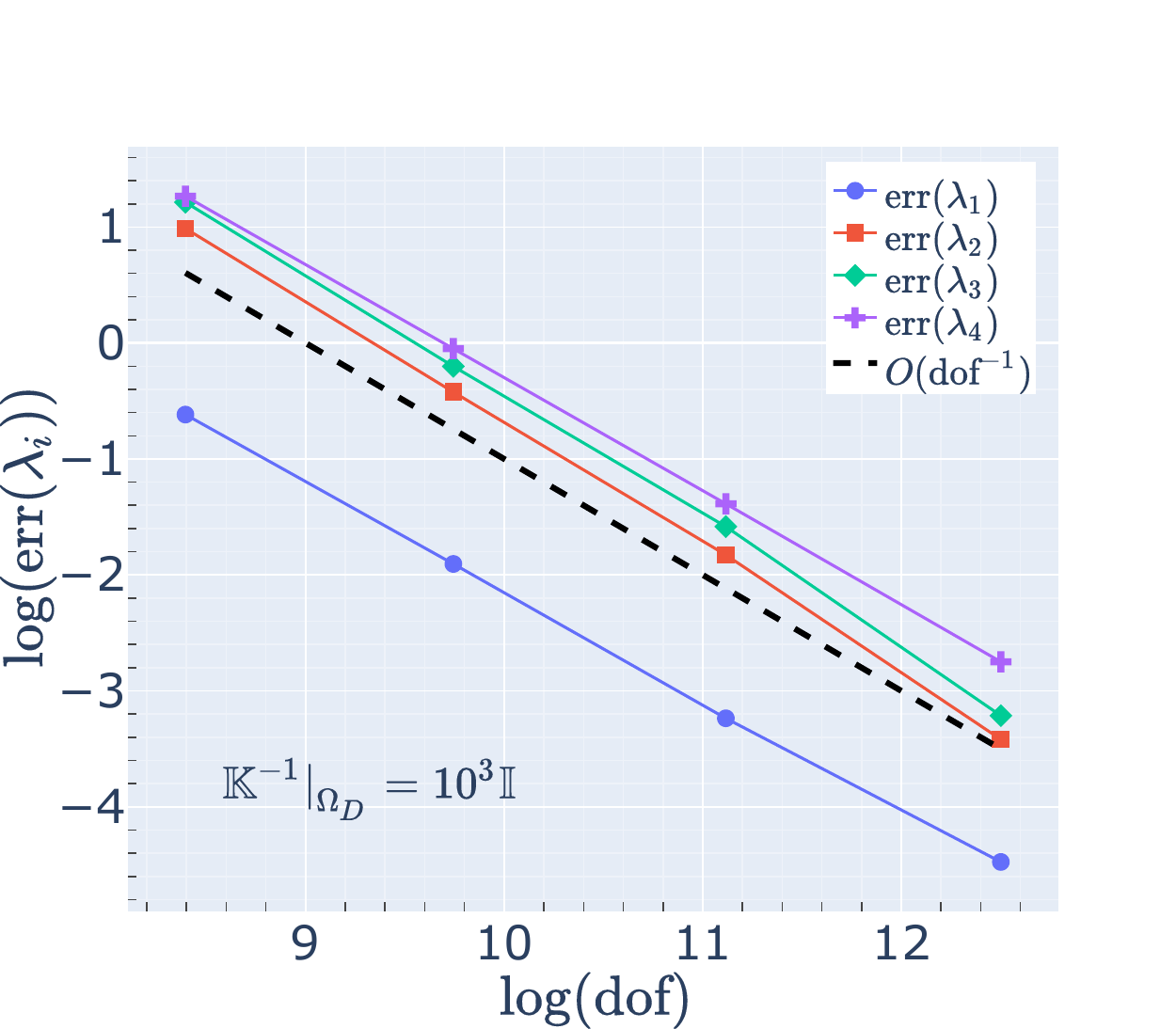}
	\end{minipage}\\
	\begin{minipage}{0.32\linewidth}\centering
		{\footnotesize $\varepsilon=1, k=2$}\\
		\includegraphics[scale=0.217, trim= 0cm 0cm 2cm 2cm,clip]{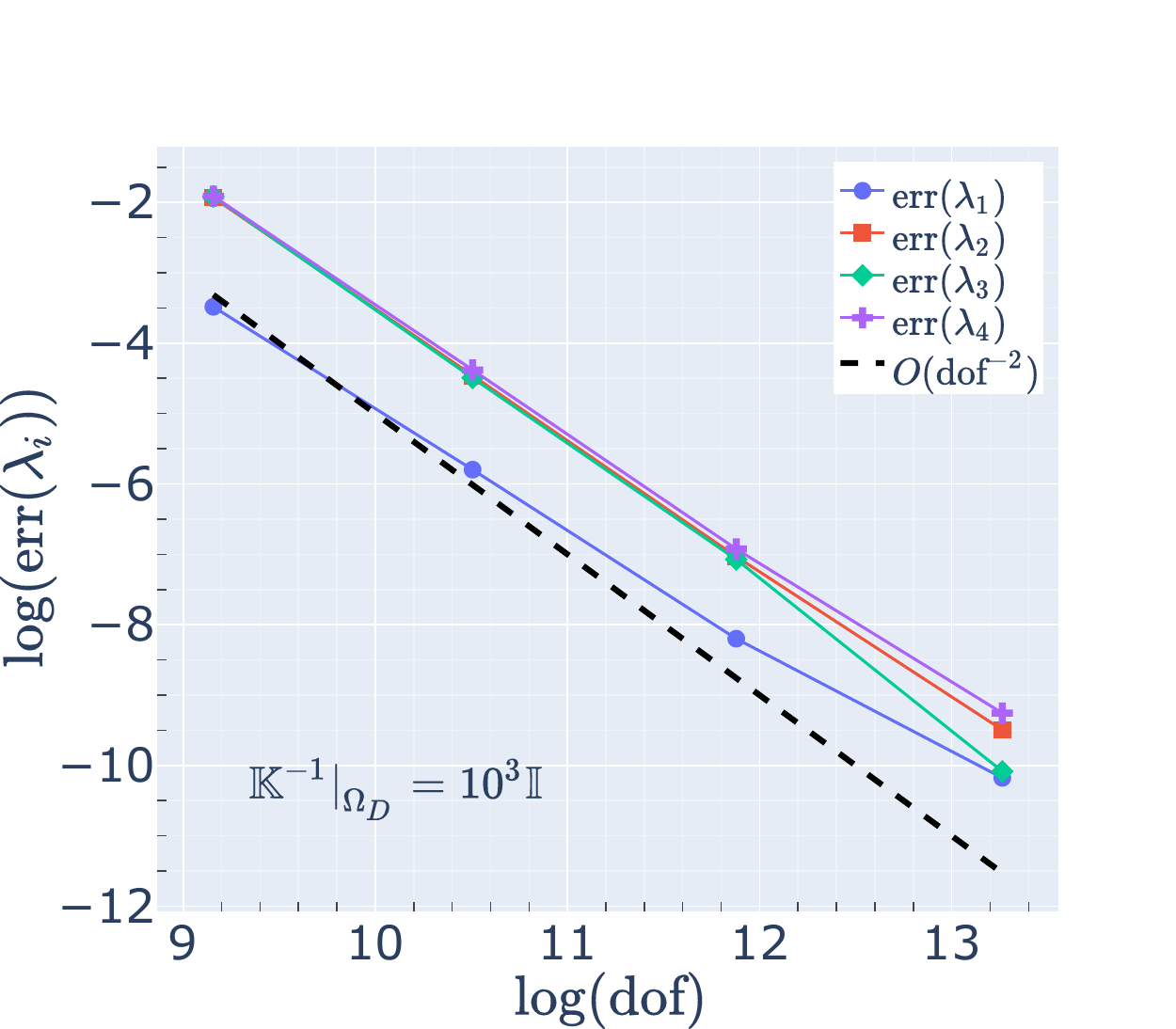}
	\end{minipage}
	\begin{minipage}{0.32\linewidth}\centering
		{\footnotesize $\varepsilon=0, k=2$}\\
		\includegraphics[scale=0.217, trim= 0cm 0cm 2cm 2cm,clip]{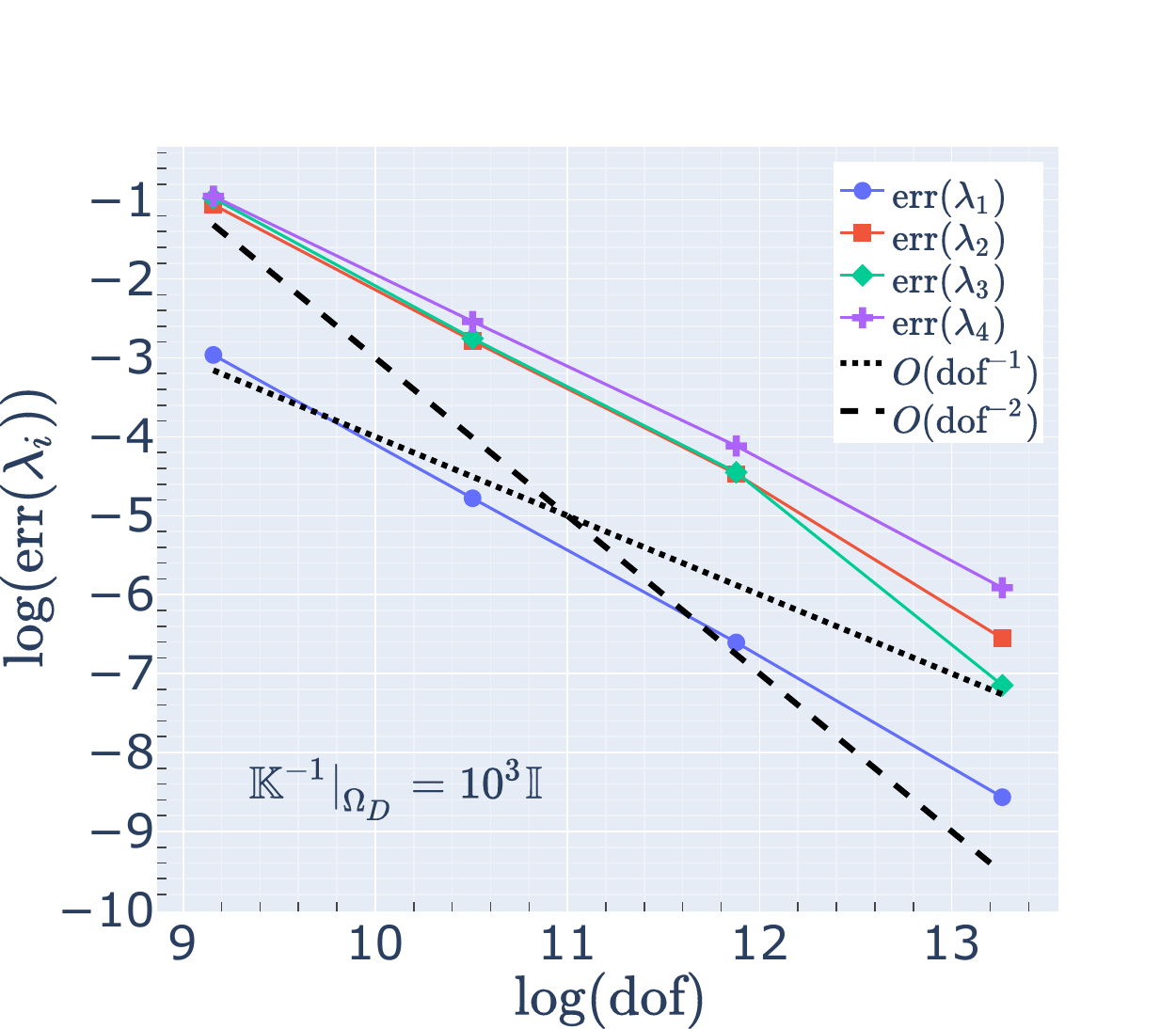}
	\end{minipage}
	\begin{minipage}{0.32\linewidth}\centering
		{\footnotesize $\varepsilon=-1, k=2$}\\
		\includegraphics[scale=0.217, trim= 0cm 0cm 2cm 2cm,clip]{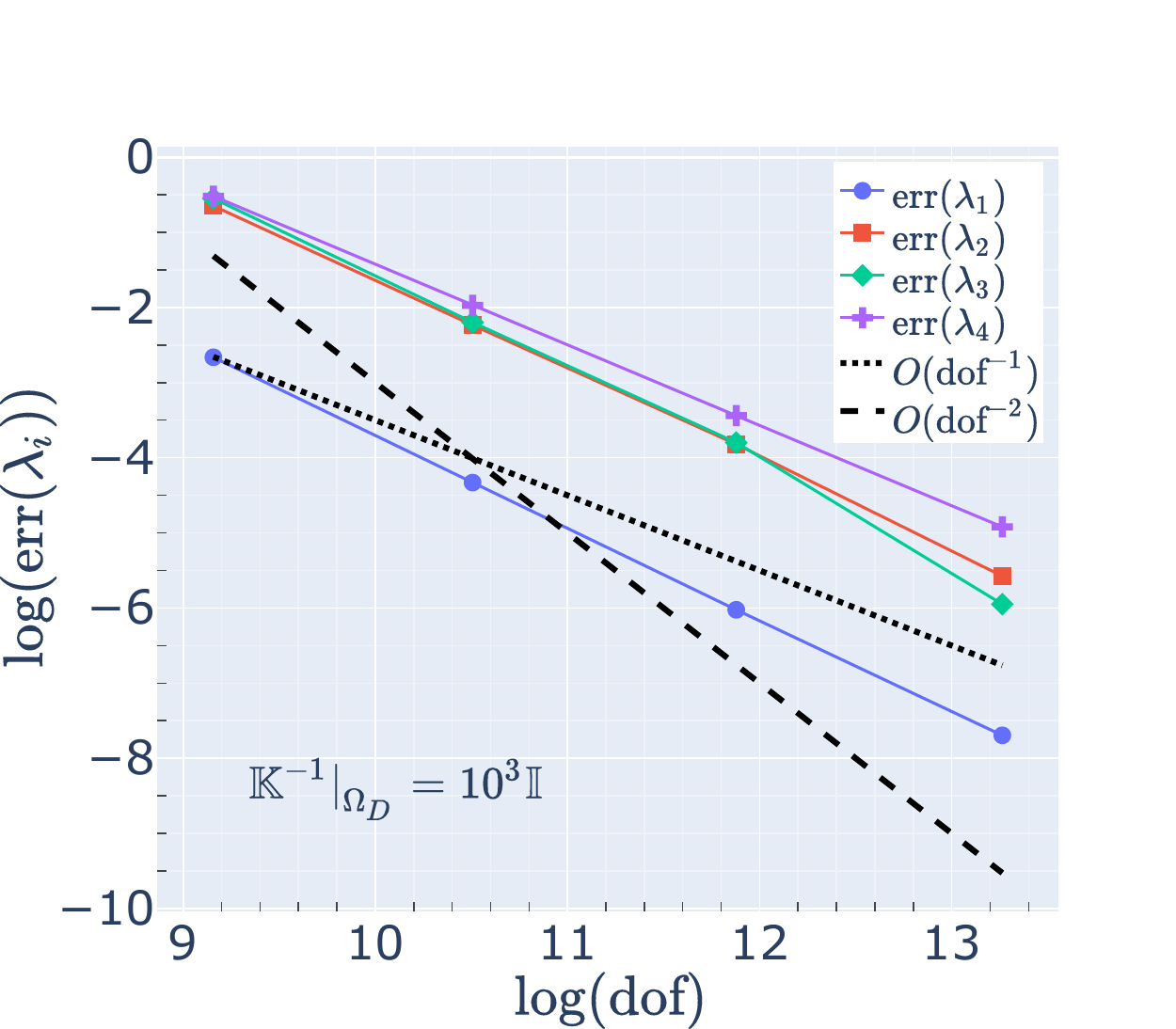}
	\end{minipage}\\
	\begin{minipage}{0.32\linewidth}\centering
		{\footnotesize $\varepsilon=1, k=3$}\\
		\includegraphics[scale=0.217, trim= 0cm 0cm 2cm 2cm,clip]{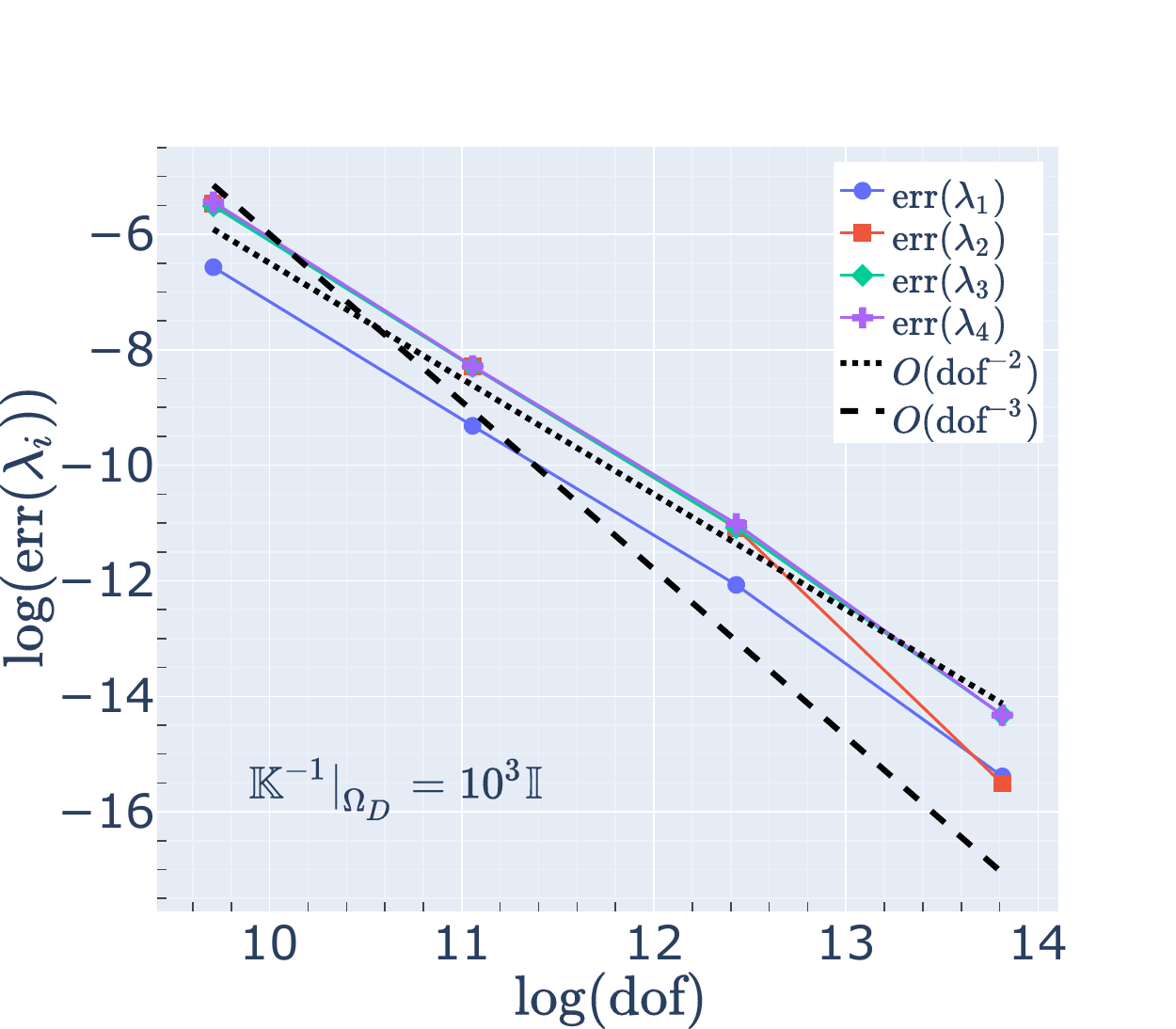}
	\end{minipage}
	\begin{minipage}{0.32\linewidth}\centering
		{\footnotesize $\varepsilon=0, k=3$}\\
		\includegraphics[scale=0.217, trim= 0cm 0cm 2cm 2cm,clip]{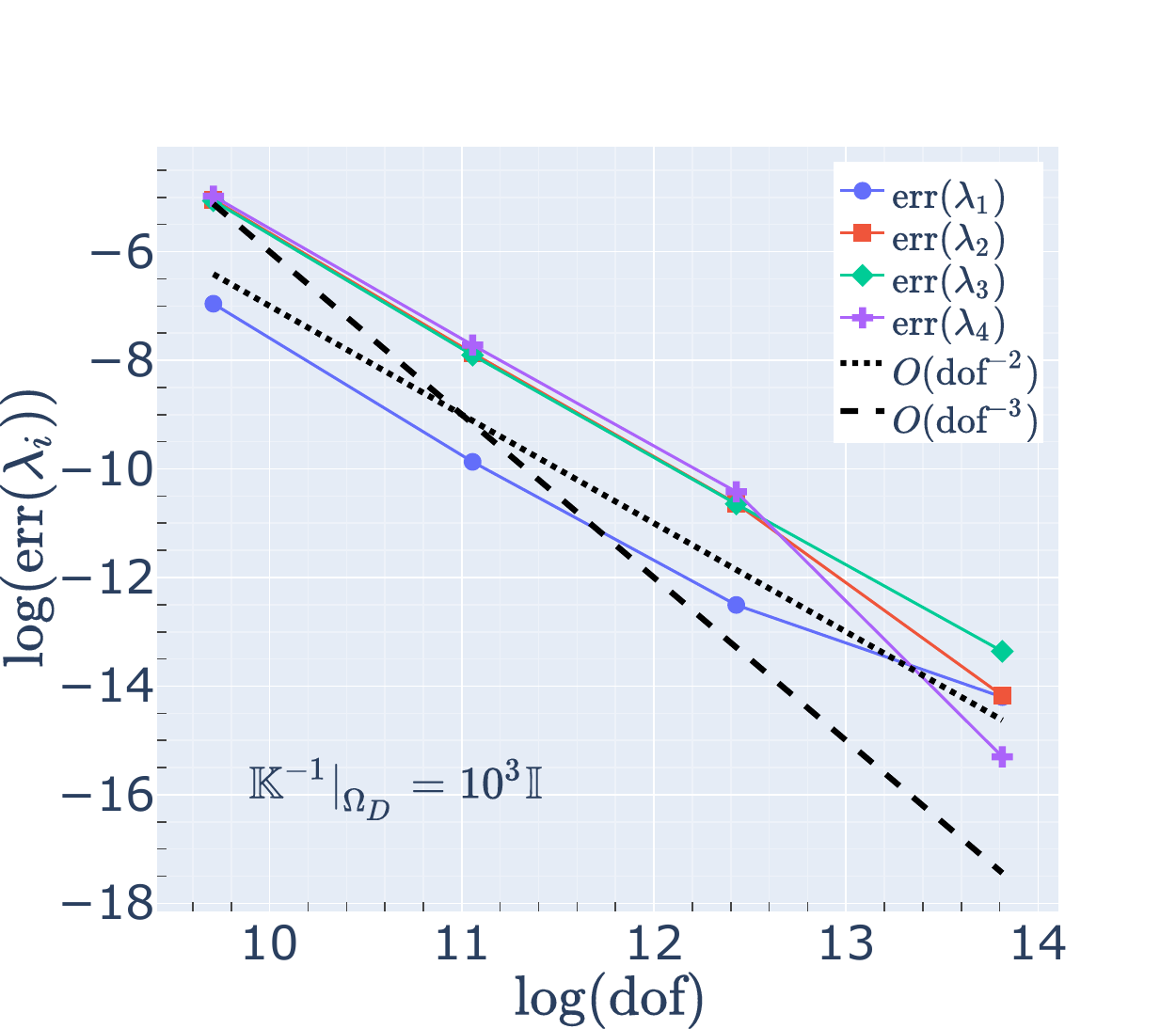}
	\end{minipage}
	\begin{minipage}{0.32\linewidth}\centering
		{\footnotesize $\varepsilon=-1, k=3$}\\
		\includegraphics[scale=0.217, trim= 0cm 0cm 2cm 2cm,clip]{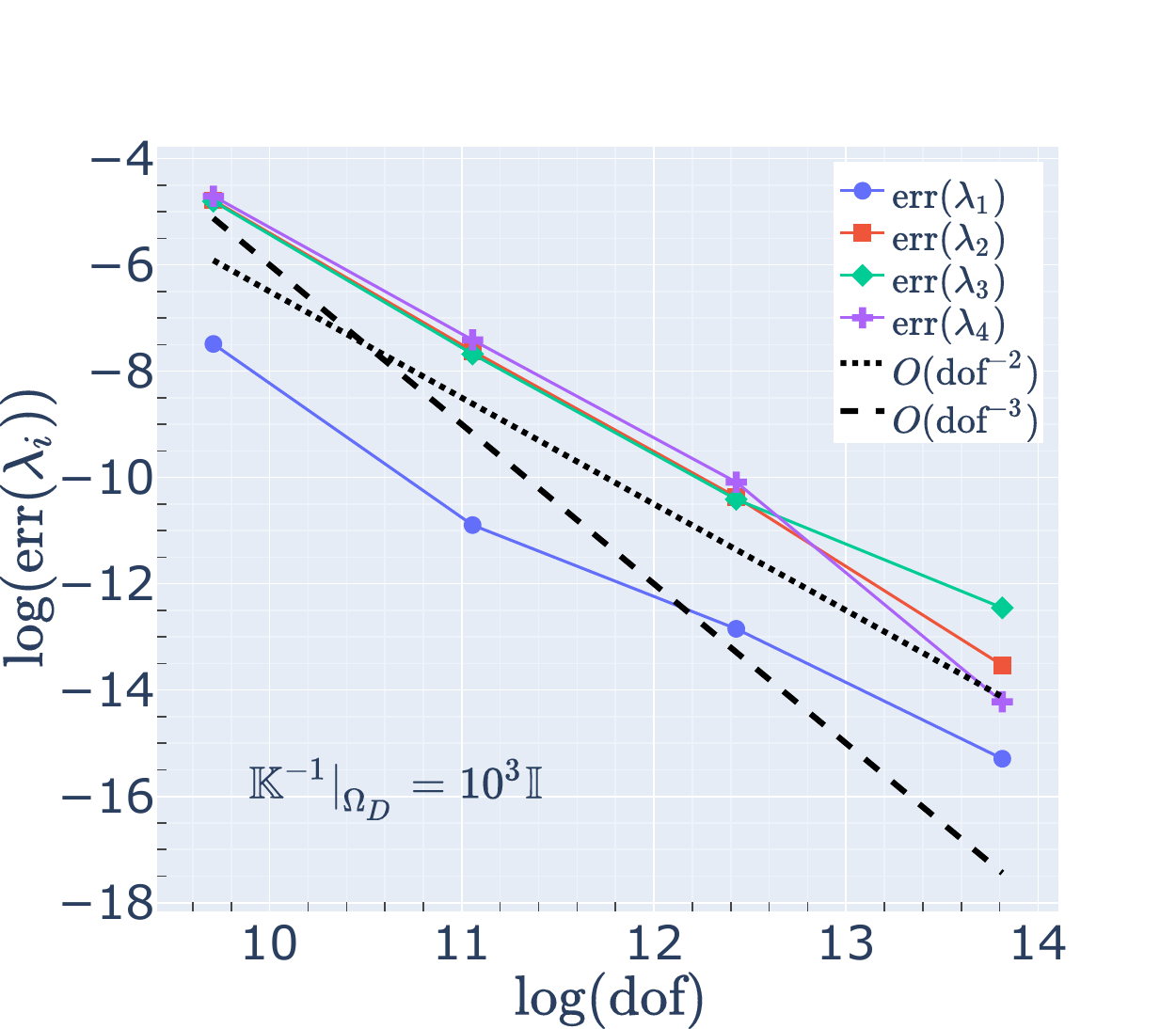}
	\end{minipage}
	\caption{Test \ref{subsec:square2D_convergence}. Convergence history for the first four eigenvalues on each IPDG scheme for a selected stability parameter $\texttt{a}=10k^2$ and $\mathbb{K}^{-1}\vert_{\Omega_D}=10^{3}$.}
	\label{fig:square_error_1e3}
\end{figure}

\subsection{Convergence on a 2D Lshaped porous domain with mixed boundary conditions}\label{subsec:lshape2D}
In this experment we put to the test the proposed scheme in the three variants of the method for a domain there there are singularities and mixed boundary conditions. The domain is the two-dimensional Lshape, defined as $\Omega:=(0,1)^2\backslash\left((0.5,0)\times(1,0.5)\right)$. We split the interior of $\Omega$ in such a way that there is an arrangement of different zones with given permeability parameters.  A sample of the meshed geometry is depicted in Figure \ref{fig:lshape-sample-domain}. Non-slip and \textit{do-nothing} boundary conditions are assumed on $\Gamma_1$ and  $\Gamma_2$, respectively. We take $\mathbb{K}$ such that $\mathbb{K}^{-1}=\boldsymbol{0}$ on $\Omega_S$, while $\mathbb{K}^{-1}=10^3\mathbb{I}$ on $\Omega_D$. On this test, the stabilization parameter is set to be $\texttt{a}=10$.

Taking as reference the suboptimal behavior observed in \cite{lepe2025jsc}, we test the a posteriori estimator for $k=1$ in the three variants of the proposed DG scheme, while, due to the suboptimality predicted in Theorem~\ref{theorem_ordendoble} and therefore the lost efficiency of the estimator, we only consider the symmetric case for the higher order $k=2$. 

In Figure~\ref{fig:lshape2d-meshes} we present the adaptive meshes obtained by the DG variants for $k=1$. It is evident that our adaptive algorithm concentrates most of the refinements around the reentrant corner, as well as in regions with high pressure gradients. It is also worth noting that the number of elements marked inside the domain by the skew-symmetric method is higher than in the other schemes.

We conclude this test by presenting the error history and estimator efficiency for all IPDG schemes in Figures~\ref{fig:error-eff-lshape2d-nonsymmetric}. In all cases, convergence rates of double order are observed, and the estimator remains both reliable and efficient, remaining bounded away from zero. For the symmetric case, a convergence rate of $\mathcal{O}(h^{2k})$ is clearly achieved. For the non-symmetric methods, the error curves and estimator efficiency exhibit very similar behavior and are optimal for $k=1$.

\begin{figure}[!hpbt]\centering
	\includegraphics[scale=0.15]{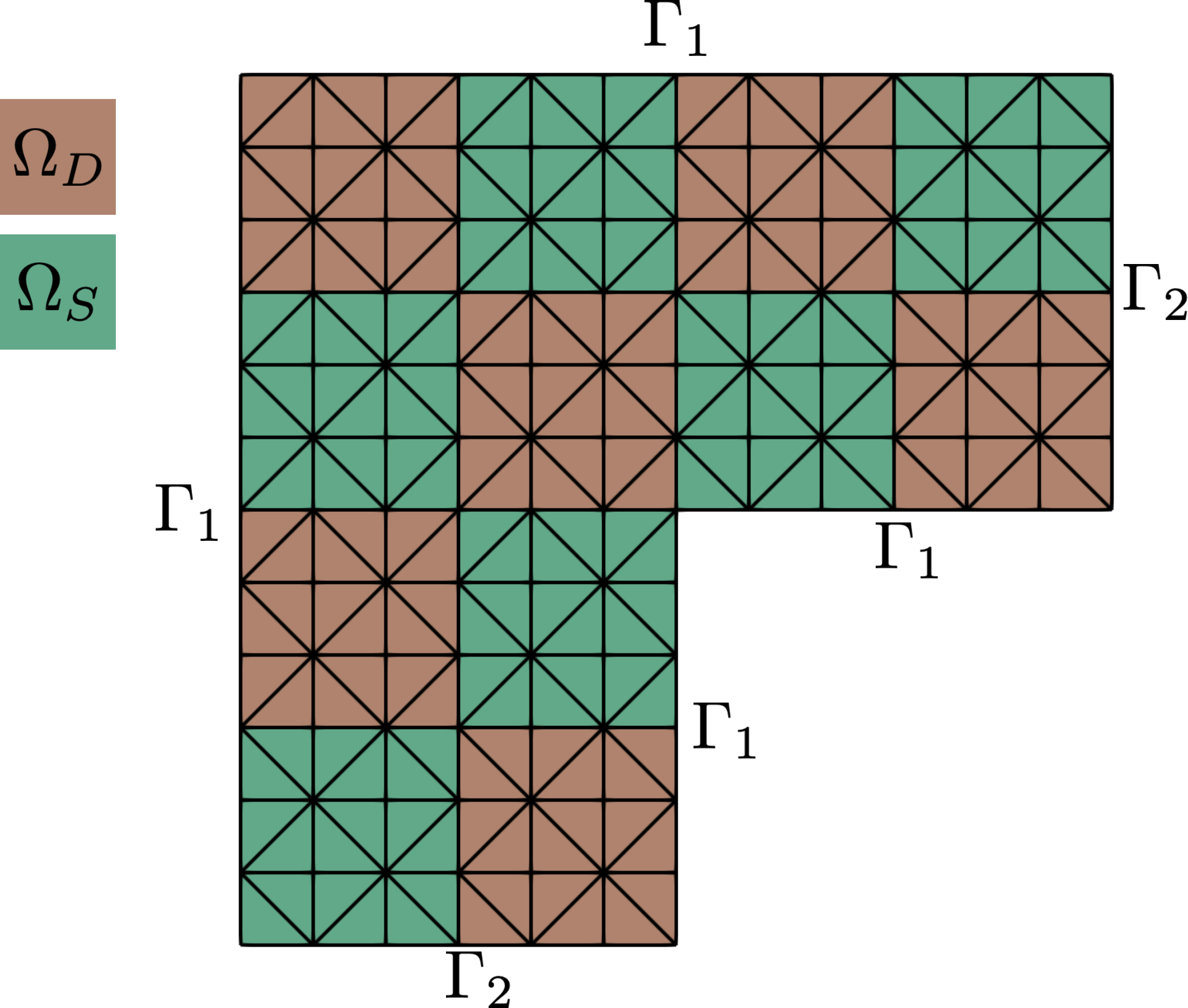}
	\caption{Test \ref{subsec:lshape2D}. Sample geometry of an Lshaped domain with a chessboard-like distribution of permeability regions and $N=10$.}
	\label{fig:lshape-sample-domain}
\end{figure}

\begin{figure}[!hpbt]\centering
	\begin{minipage}{0.32\linewidth}\centering
		{\footnotesize $\lambda_{h,1}, \varepsilon=1, \texttt{dof}=174678$}\\
		\includegraphics[scale=0.09,trim=28.2cm 8cm 28.2cm 8cm,clip]{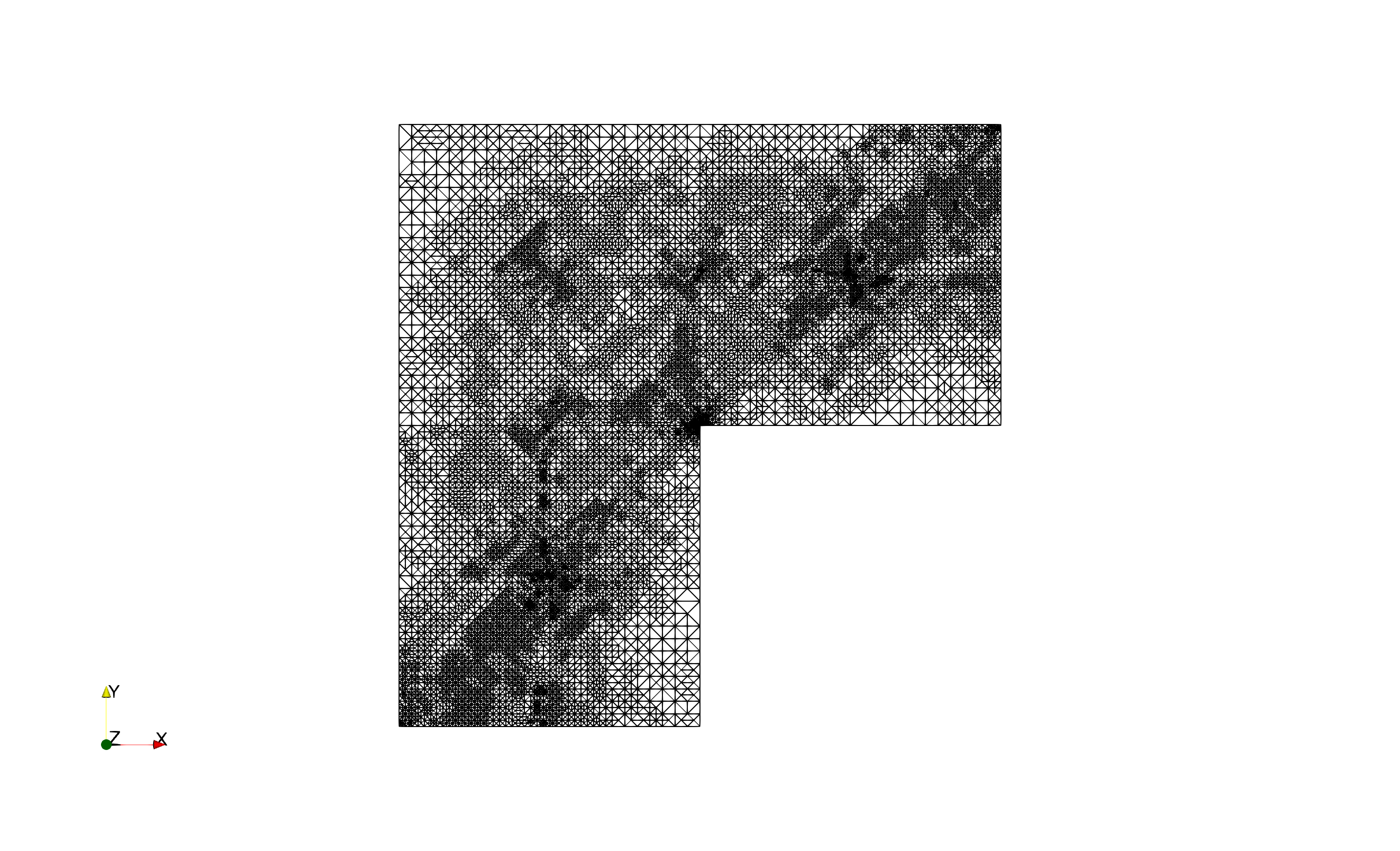}
	\end{minipage}
	\begin{minipage}{0.32\linewidth}\centering
		{\footnotesize $\lambda_{h,1}, \varepsilon=0, \texttt{dof}=204862$}\\
		\includegraphics[scale=0.09,trim=28.2cm 8cm 28.2cm 8cm,clip]{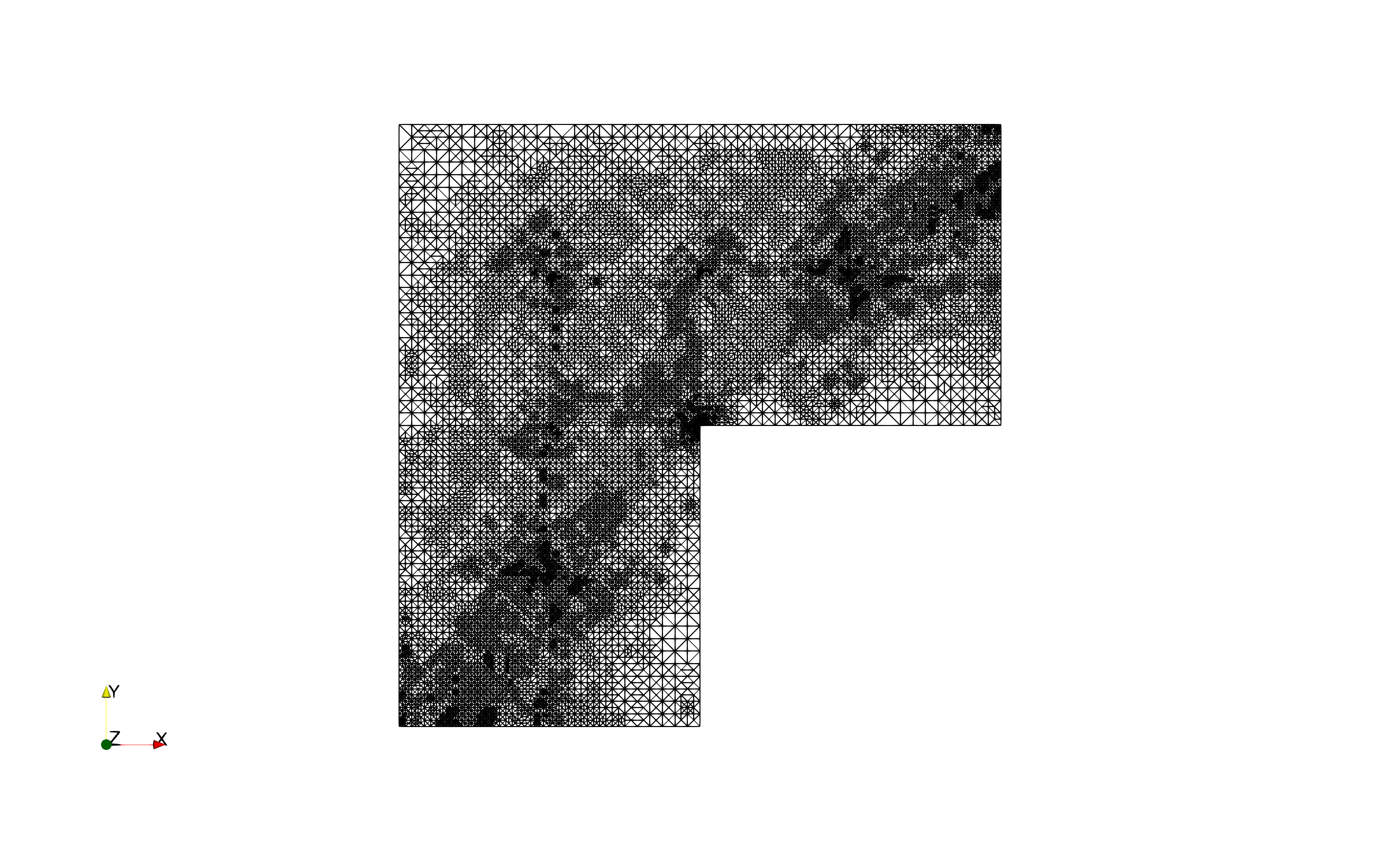}
	\end{minipage}
	\begin{minipage}{0.32\linewidth}\centering
		{\footnotesize $\lambda_{h,1},\varepsilon=-1, \texttt{dof}=229579$}\\
		\includegraphics[scale=0.09,trim=28.2cm 8cm 28.2cm 8cm,clip]{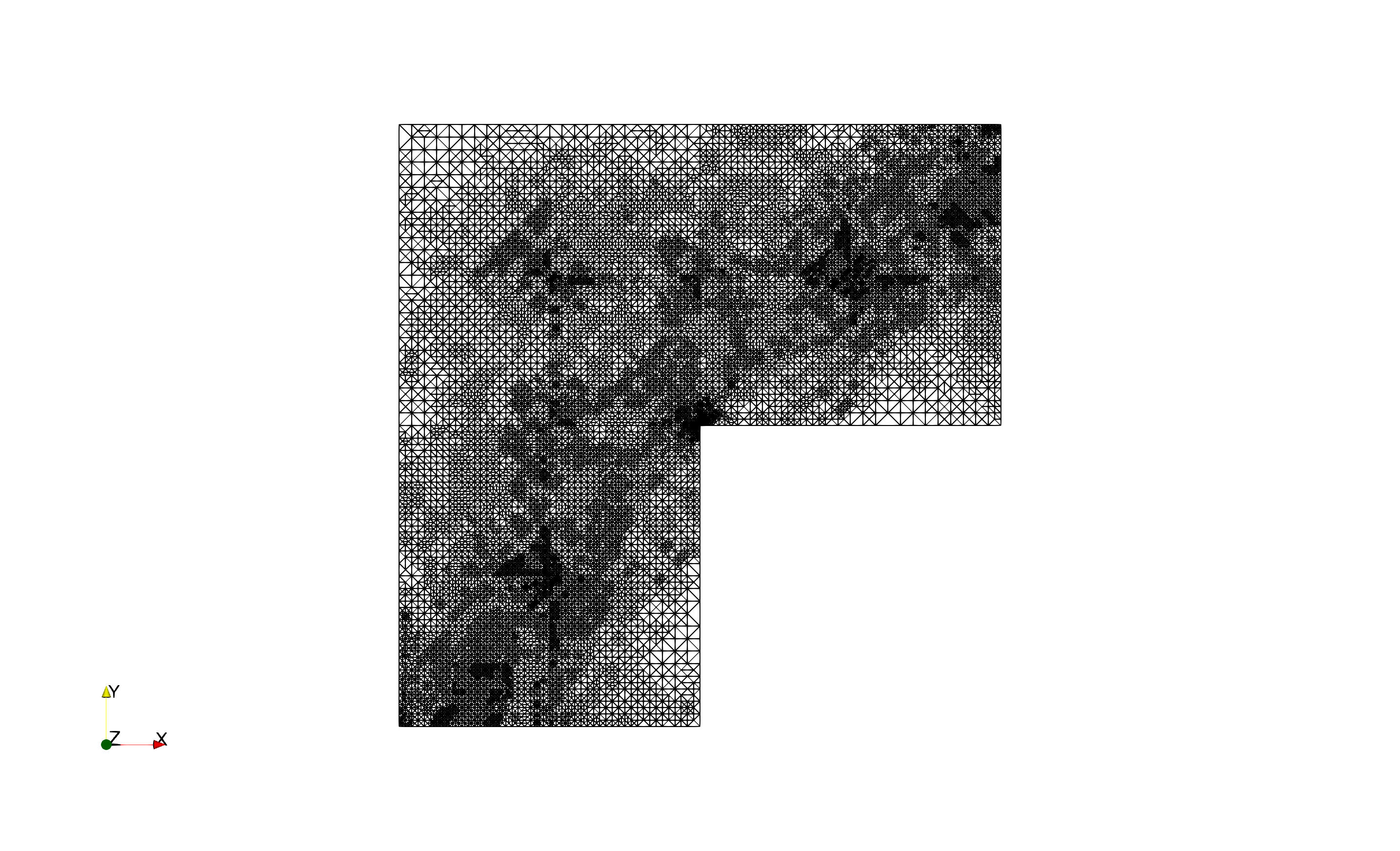}
	\end{minipage}\\
	\begin{minipage}{0.32\linewidth}\centering
		{\footnotesize $\lambda_{h,4}, \varepsilon=1, \texttt{dof}=166467$}\\
		\includegraphics[scale=0.09,trim=28.2cm 8cm 28.2cm 8cm,clip]{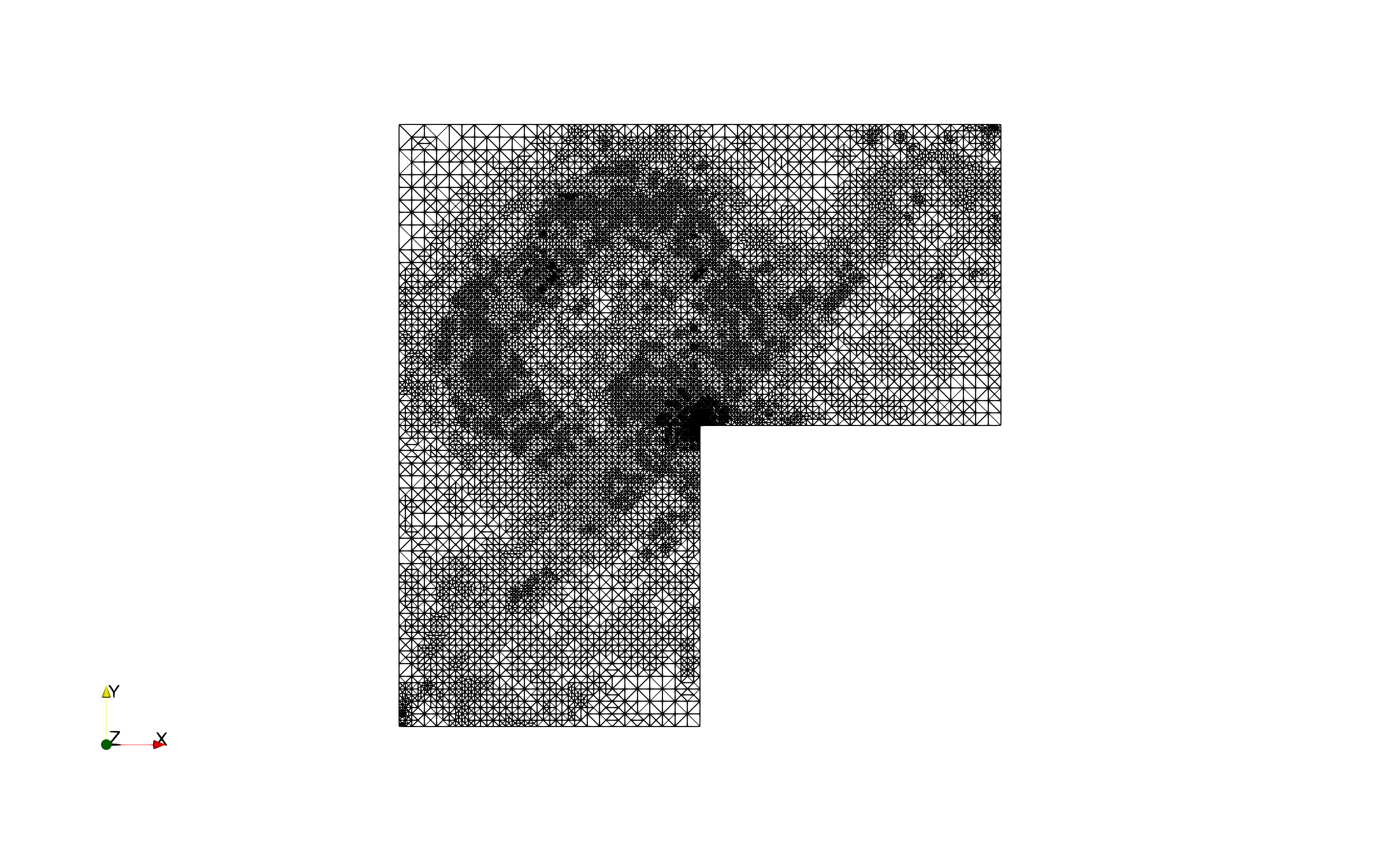}
	\end{minipage}
	\begin{minipage}{0.32\linewidth}\centering
		{\footnotesize $\lambda_{h,4}, \varepsilon=0, \texttt{dof}=167160$}\\
		\includegraphics[scale=0.09,trim=28.2cm 8cm 28.2cm 8cm,clip]{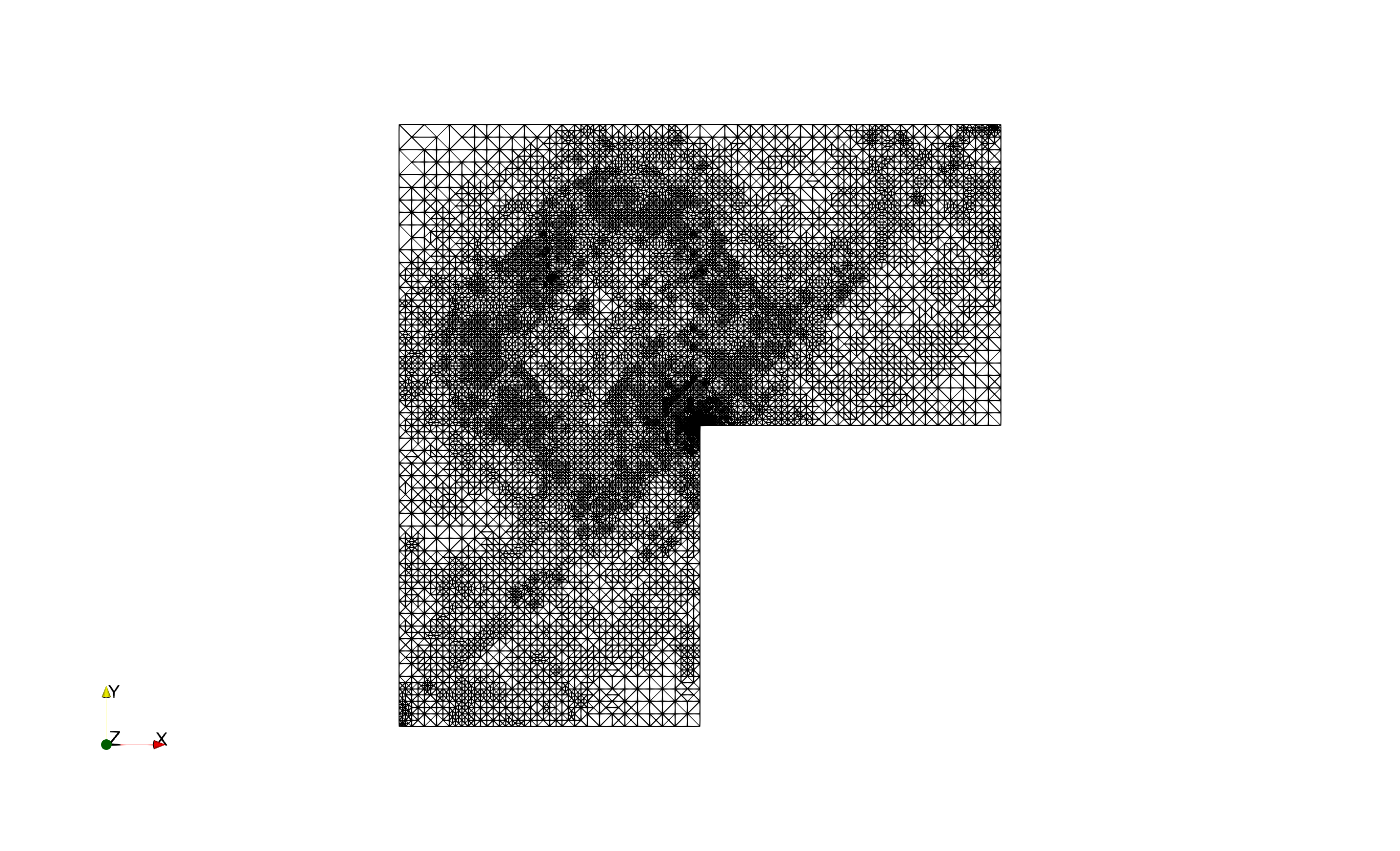}
	\end{minipage}
	\begin{minipage}{0.32\linewidth}\centering
		{\footnotesize $\lambda_{h,4}, \varepsilon=-1, \texttt{dof}=178409$}\\
		\includegraphics[scale=0.09,trim=28.2cm 8cm 28.2cm 8cm,clip]{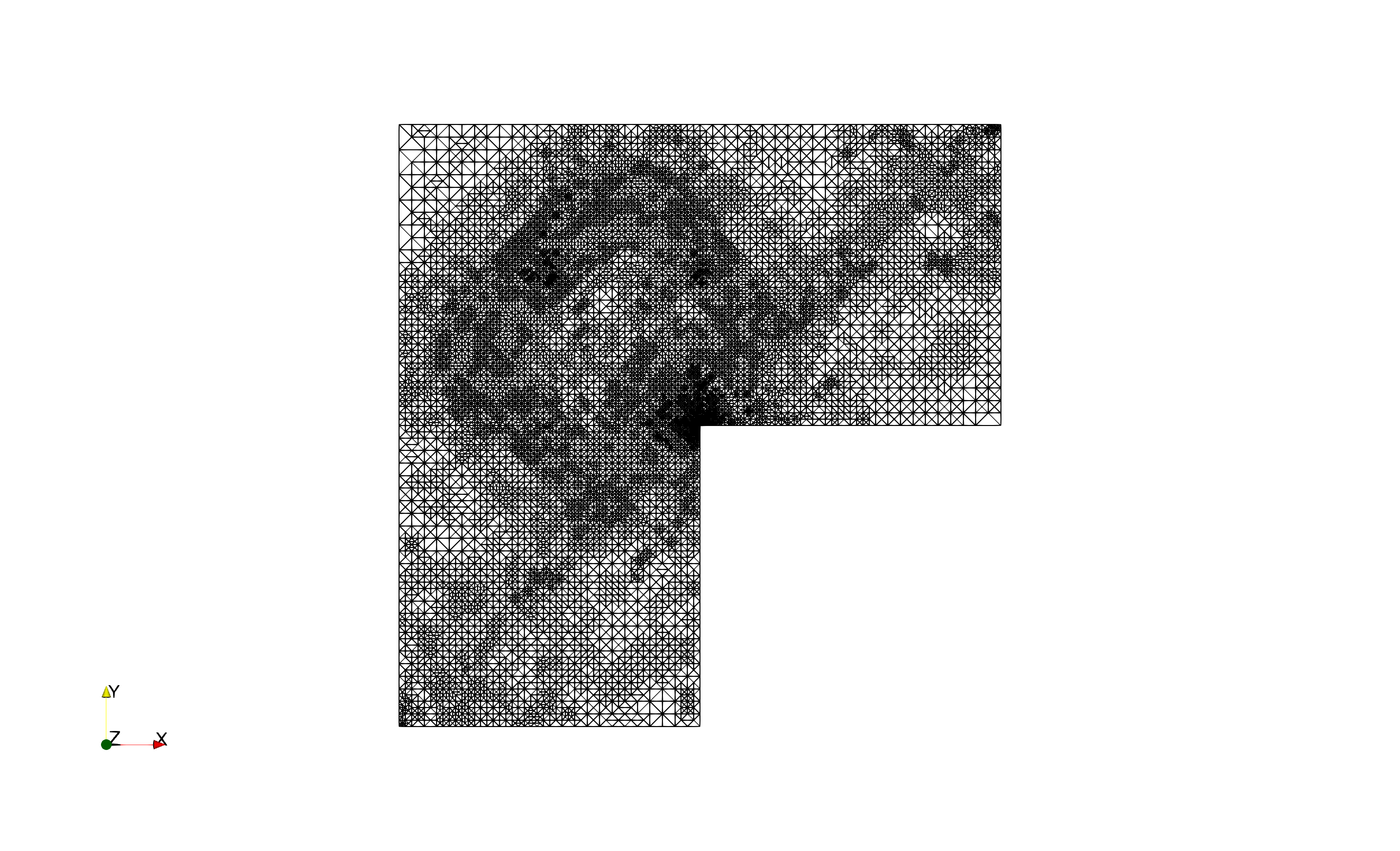}
	\end{minipage}\\
	\caption{Test \ref{subsec:lshape2D}. Last adaptive meshes for the first and fourth computed eigenvalue with for all the variants of the IPDG method in the Lshaped geometry with mixed boundary conditions and $\mathbb{K}^{-1}:=10^{3}\mathbb{I}$ on $\Omega_D$.}
	\label{fig:lshape2d-meshes}
\end{figure}

\begin{figure}[!hpbt]\centering
	\begin{minipage}{0.49\linewidth}\centering
		\includegraphics[scale=0.32,trim=0cm 0cm 2cm 2cm,clip]{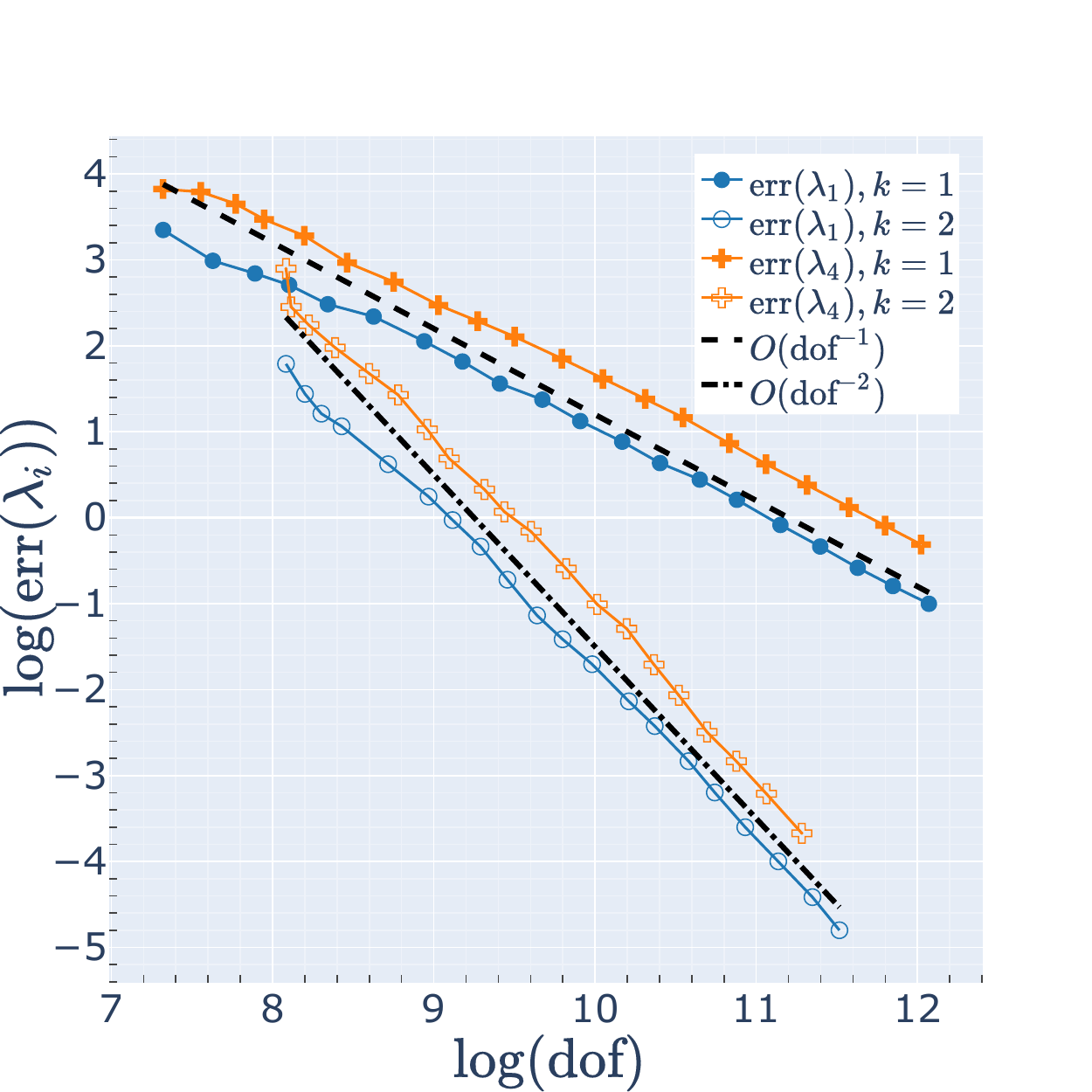}
	\end{minipage}
	\begin{minipage}{0.49\linewidth}\centering
		\includegraphics[scale=0.32,trim=0cm 0cm 2cm 2cm,clip]{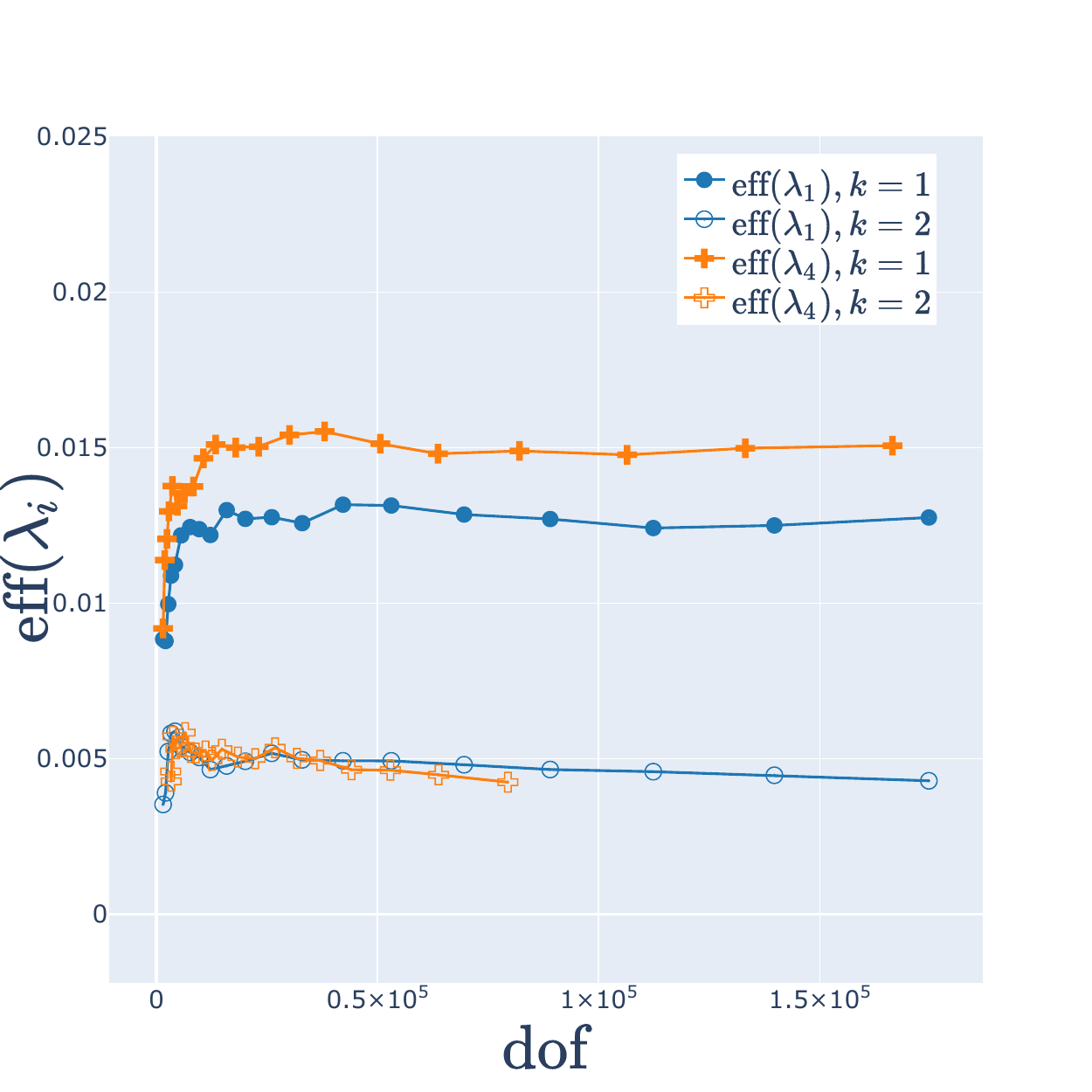}
	\end{minipage}
	\caption{Test \ref{subsec:lshape2D}. Error history for the adaptive refinements when computing the first and fourth eigenvalue (left) together with their corresponding effectivity indexes (right) in the symmetric case ($\varepsilon=1$).}
	\label{fig:error-eff-lshape2d-symmetric}
\end{figure}

\begin{figure}[!hpbt]\centering
	\begin{minipage}{0.49\linewidth}\centering
		\includegraphics[scale=0.32,trim=0cm 0cm 2cm 2cm,clip]{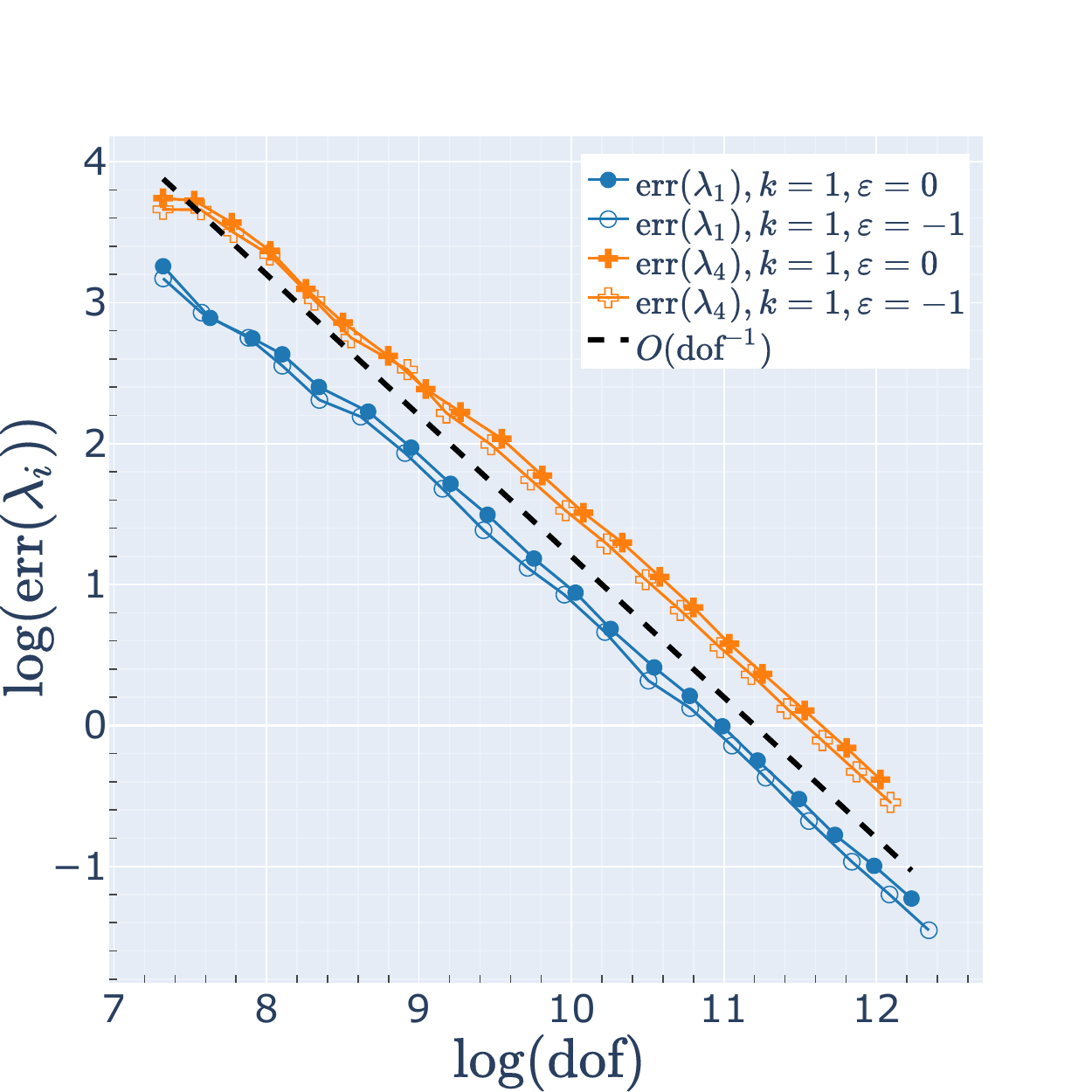}
	\end{minipage}
	\begin{minipage}{0.49\linewidth}\centering
		\includegraphics[scale=0.32,trim=0cm 0cm 2cm 2cm,clip]{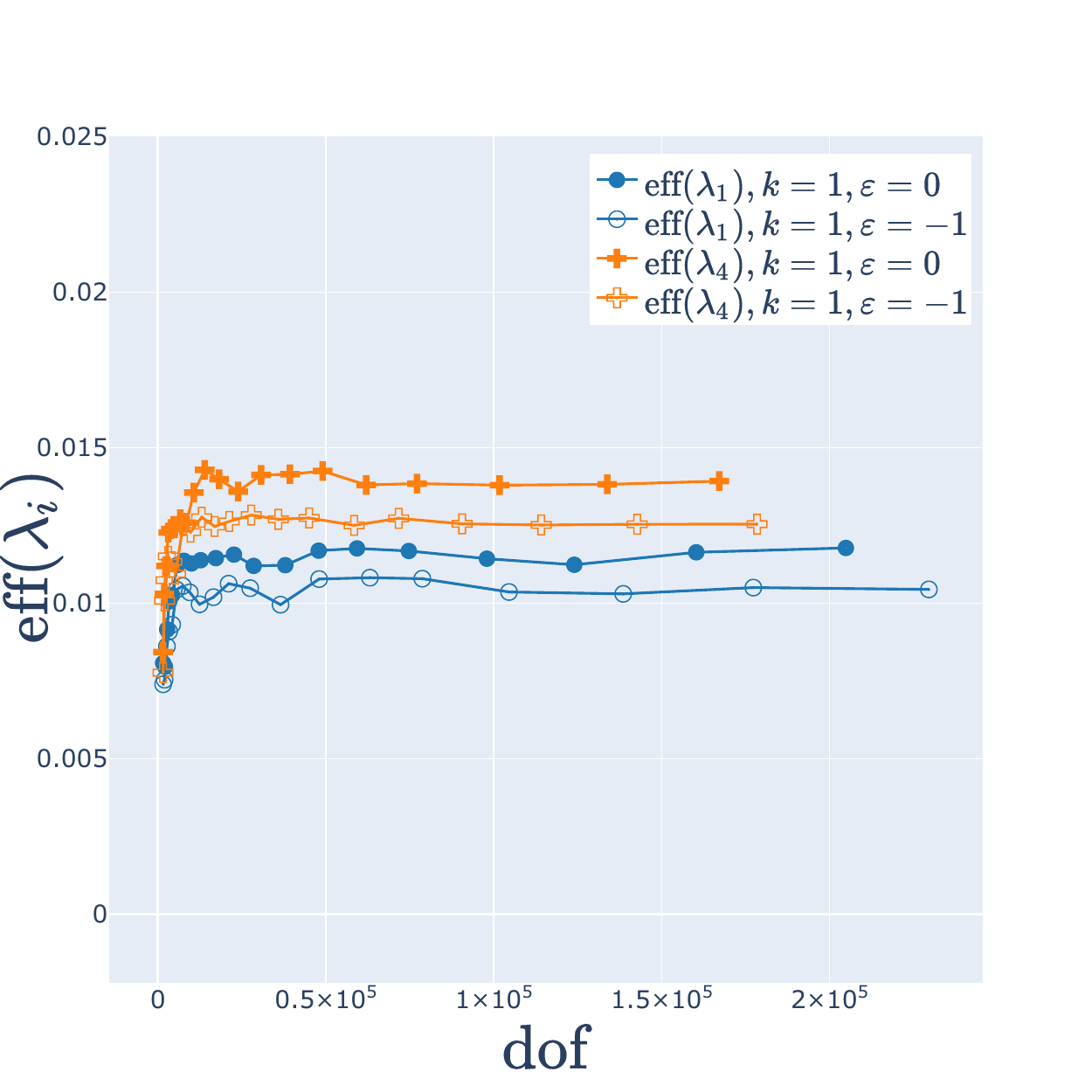}
	\end{minipage}
	\caption{Test \ref{subsec:lshape2D}. Error history for the adaptive refinements when computing the first and fourth eigenvalue (left) together with their corresponding effectivity indexes (right) in the incomplete and skew-symmetric  cases.}
	\label{fig:error-eff-lshape2d-nonsymmetric}
\end{figure}

\subsection{3D channel with a porous obstacle}\label{subsec:3d-channel}
We end the numerical section by presenting some results of the DG method on three-dimensional domains. For simplicity, we only consider the symmetric case $\varepsilon=1$. The domain under study is a box defined by $\Omega:=(0,1)\times(0,1)\times(0,3)$. Within this domain, we define the permeability parameter $\mathbb{K}$ as
$$
\mathbb{K}^{-1}=\left\{
\begin{aligned}
	&\kappa\mathbb{I}&\text{if } (x,y)\in\Omega_D,\\
	&\boldsymbol{0}, & \text{if } (x,y)\in\Omega_S,
\end{aligned}
\right.
$$
where $\Omega_D:=(0,1)\times(1/3,2/3)\times(4/3,5/3)$ and $\Omega_S=\Omega\backslash\Omega_D$. We choose $\mathbb{K}^{-1}=10^{3}\mathbb{I}$. This choice allows to have a membrane-like behavior with partial permeability across $\Omega_D$. A graphical description of the domain is portrayed in Figure \ref{fig:3d-channel-init-mesh}.

We solve the eigenvalue problem with $k=1,2$ and obtain the extrapolated discrete eigenvalue $\lambda_{1}=33.70064$, which is considered as the exact solution. Then, we perform 10 adaptive iterations for $k=1$ and 9 iterations for $k=2$ in order to observe the convergence rates and the reliability/efficiency of the estimator.  The stabilization parameter is set to be $\texttt{a}=10$.

We present the corresponding lowest eigenmodes in Figure~\ref{fig:3d-channel-uh-ph}. Here, we observe the velocity field across the domain, entering and exiting through $\Gamma_2$, and we also note that some of the fluid, although with low magnitude, passes through the porous subdomain. This mild porosity causes high pressure gradients, represented by a concentrated cloud of points around $\Omega_D$. The eigenmode behavior is detected by the estimator and the adaptive algorithm, which marks the elements near the boundary of $\Omega_D$ for refinement. Some samples of the adaptive meshes for $k=1,2$ are presented in Figure~\ref{fig:3d-channel-adaptive-meshes}, where critical singular zones are refined as expected. Similar to the 2D case, fewer elements are marked to achieve optimal rates for $k=2$.

The error and effectivity indices for the symmetric DG scheme are depicted in Figure~\ref{fig:error-eff-channel3d-symmetric}. A rate $\mathcal{O}(\texttt{dof}^{-2k/d})$ is observed for $k\geq 1$ and $d=3$, implying an $h$-convergence rate of $\mathcal{O}(h^{2k})$. Moreover, the estimator effectivity remains properly bounded, demonstrating the reliability and efficiency of the estimator in the three-dimensional case with mixed boundary conditions.

\begin{figure}[!hpbt]\centering
\includegraphics[scale=0.14, trim=0cm 0cm 0cm 0cm,clip]{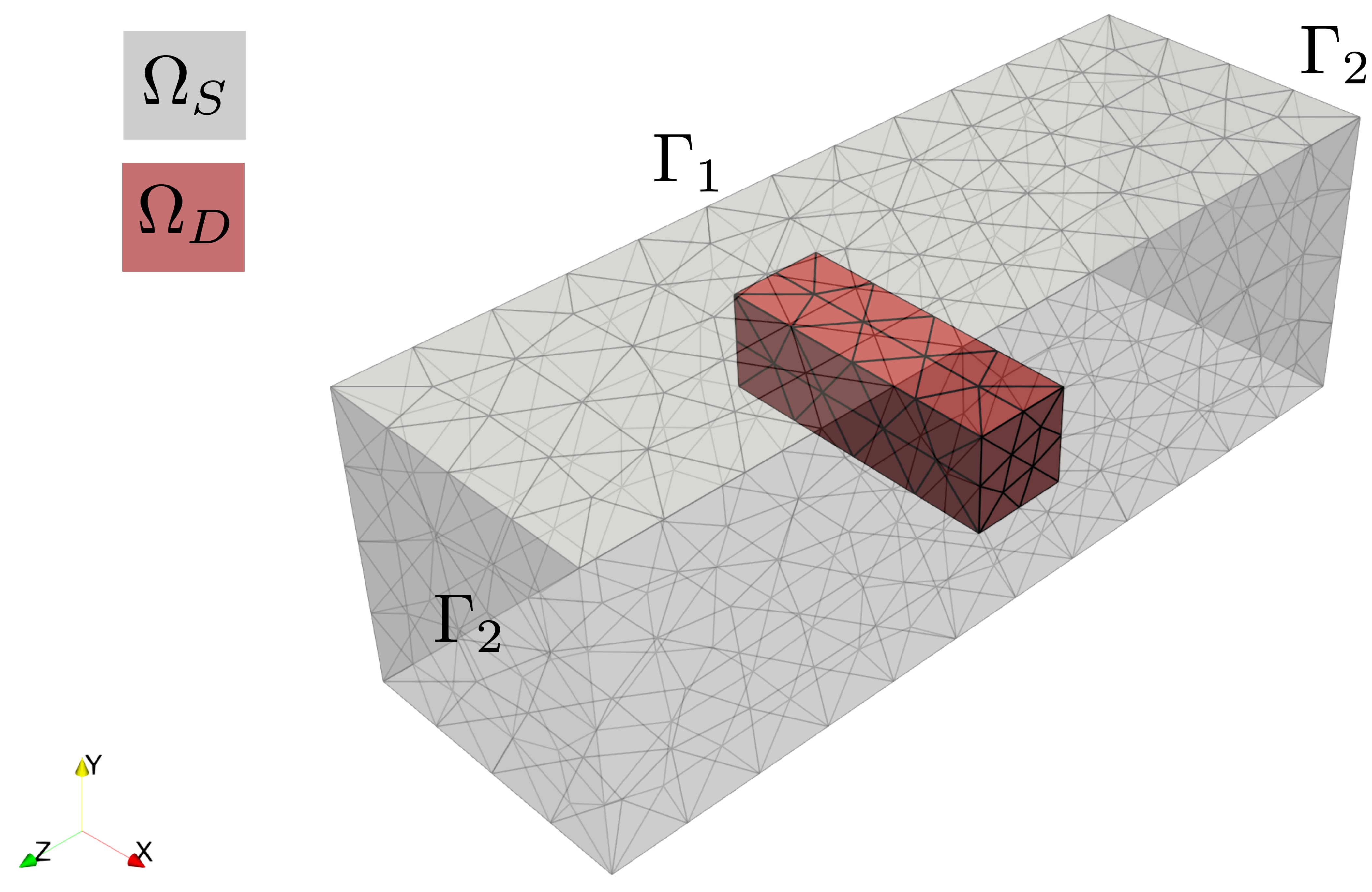}
\caption{Test \ref{subsec:3d-channel}. The channel domain with the initial mesh configuration consisting of 2616 elements. }
\label{fig:3d-channel-init-mesh}
\end{figure}

\begin{figure}[!hpbt]\centering
\begin{minipage}{\linewidth}\centering
	{\footnotesize $\bu_{h,1}$}\\
\includegraphics[scale=0.09,trim=4cm 5cm 10cm 14cm,clip]{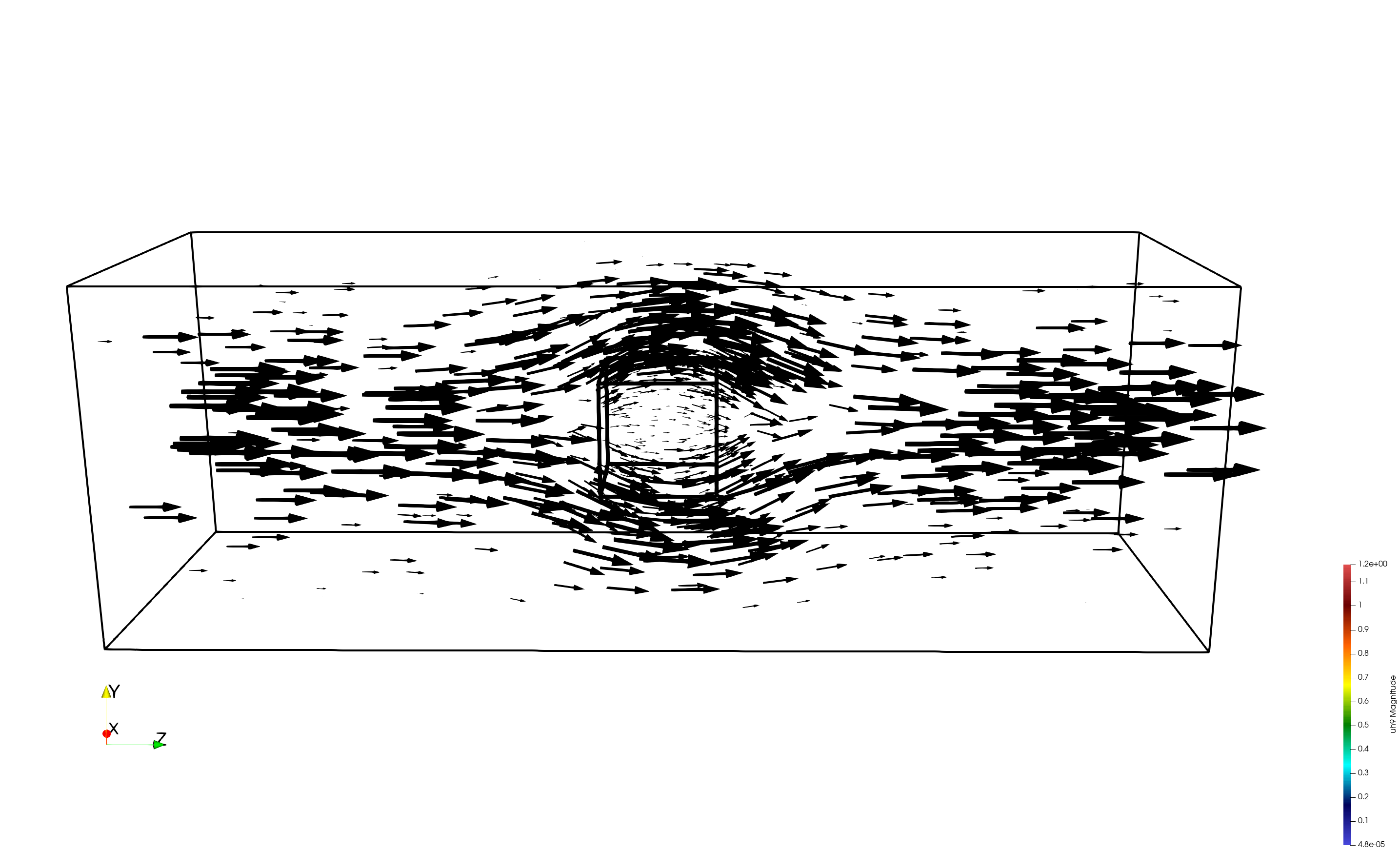}
\end{minipage}\\
\begin{minipage}{\linewidth}\centering
	{\footnotesize $p_{h,1}$}\\
	\includegraphics[scale=0.09,trim=4cm 5cm 10cm 14cm,clip]{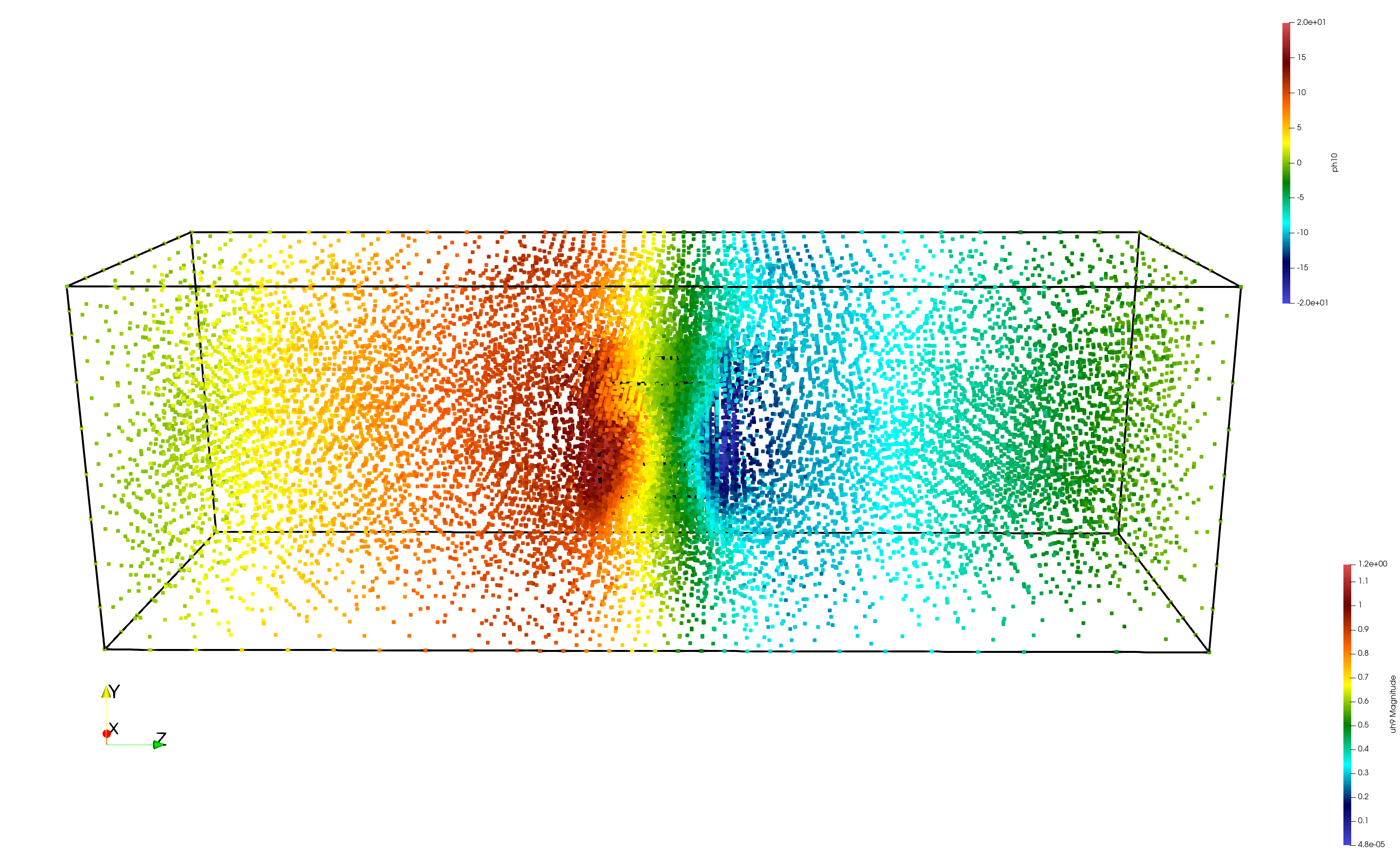}
\end{minipage}
\caption{Test \ref{subsec:3d-channel}. First lowest computed eigenmodes represented as the velocity field (top) and pressure dots cloud (bottom) in the last adaptive iteration.}
\label{fig:3d-channel-uh-ph}
\end{figure}

\begin{figure}[!hpbt]\centering
	\begin{minipage}{0.32\linewidth}\centering
		{\footnotesize $\texttt{dof}=76635$}\\
		\includegraphics[scale=0.1,trim=18cm 3cm 22cm 7cm,clip]{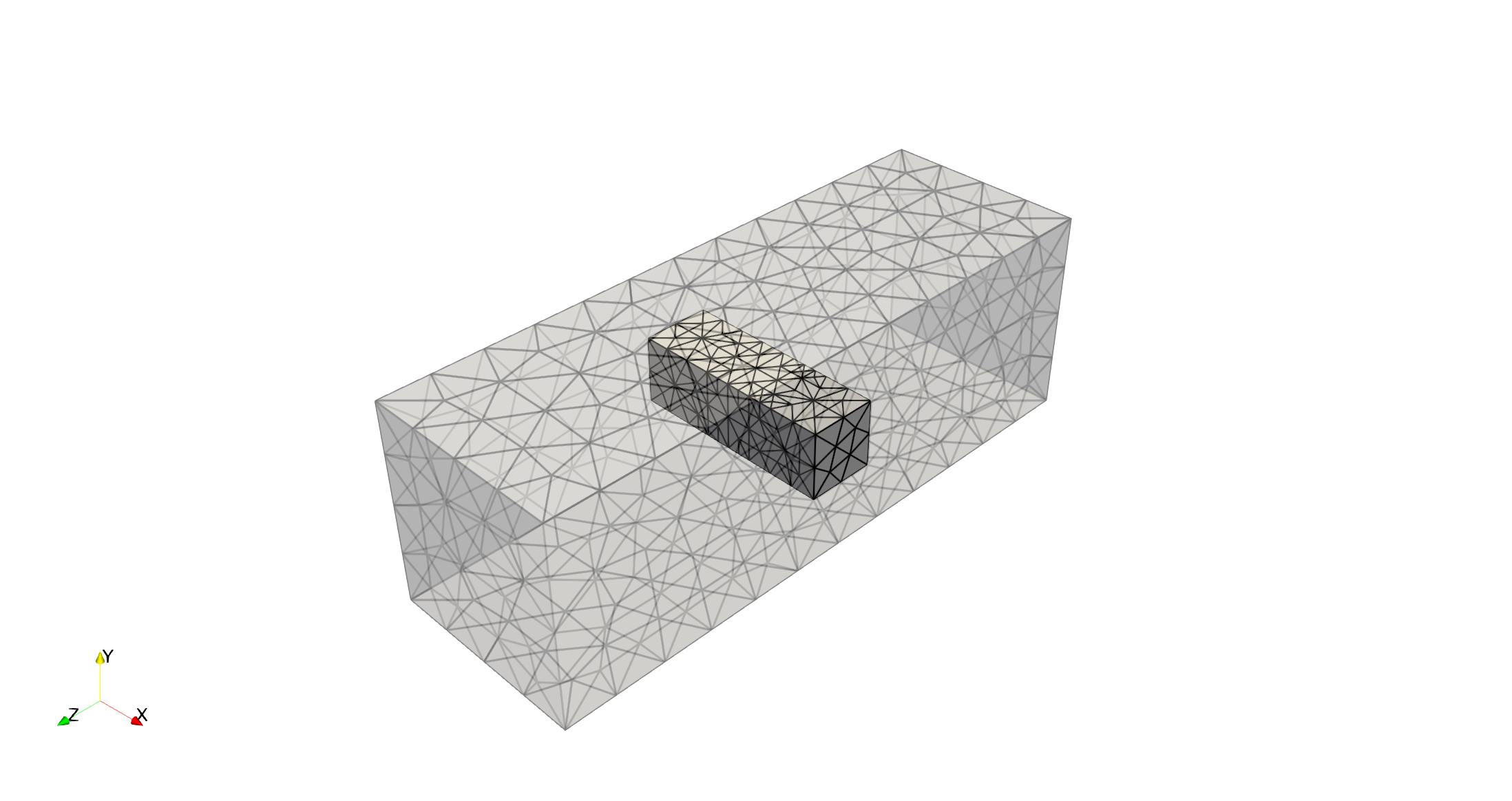}
	\end{minipage}
	\begin{minipage}{0.32\linewidth}\centering
		{\footnotesize $\texttt{dof}=324402$}\\
		\includegraphics[scale=0.1,trim=18cm 3cm 22cm 7cm,clip]{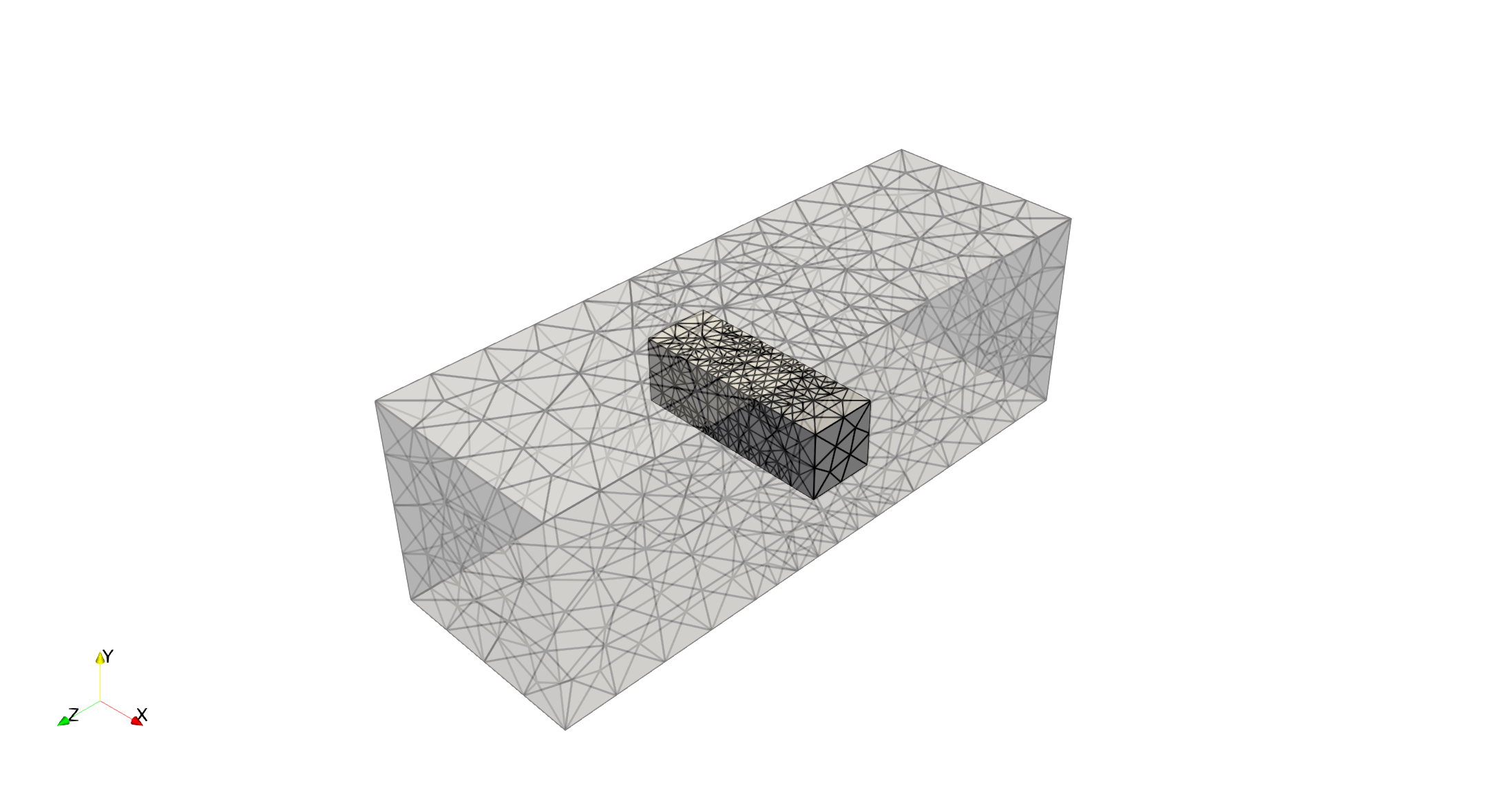}
	\end{minipage}
	\begin{minipage}{0.32\linewidth}\centering
		{\footnotesize $\texttt{dof}=1532895$}\\
		\includegraphics[scale=0.1,trim=18cm 3cm 22cm 7cm,clip]{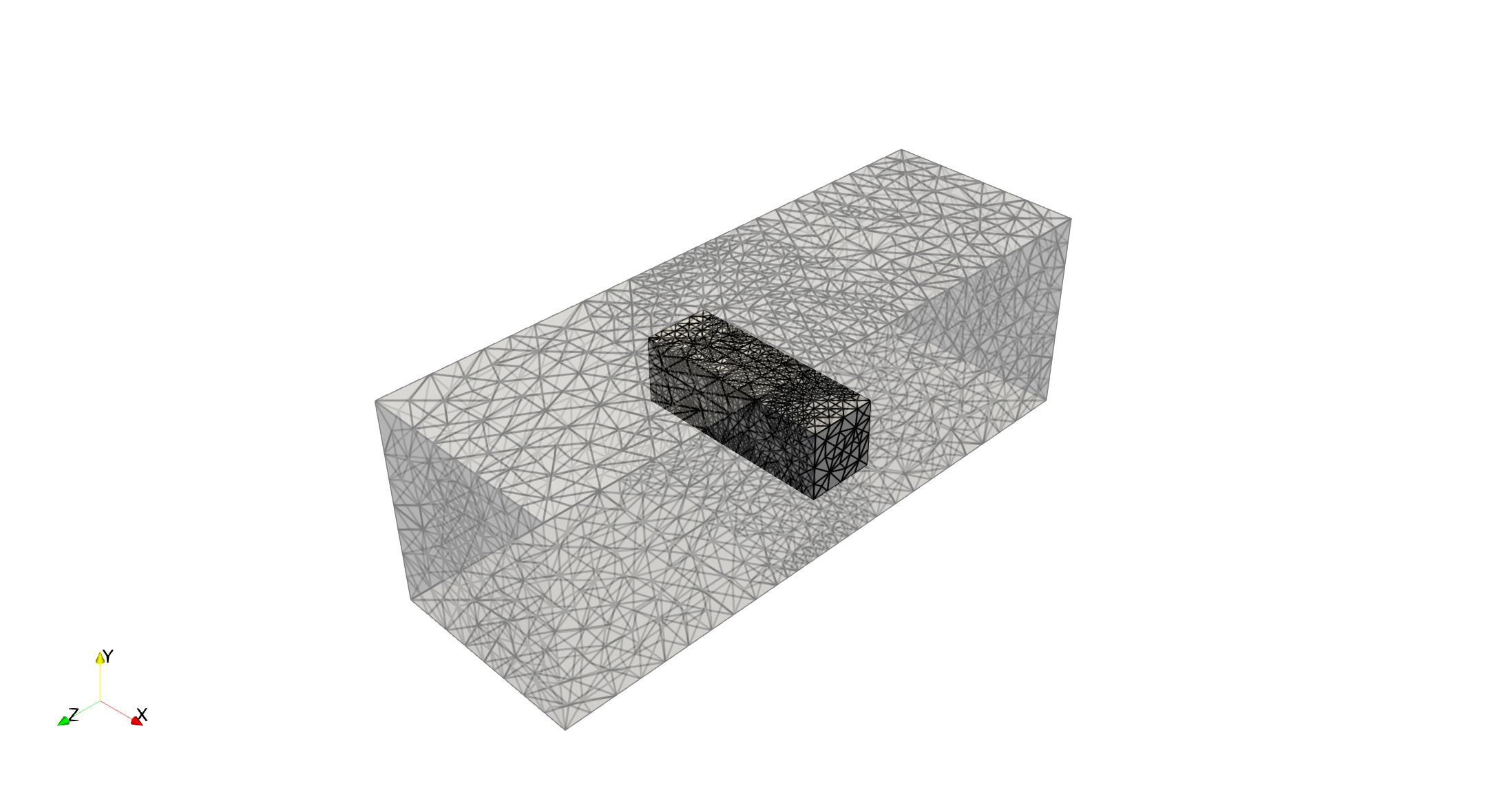}
	\end{minipage}\\
	\begin{minipage}{0.32\linewidth}\centering
		{\footnotesize $\texttt{dof}=166600$}\\
		\includegraphics[scale=0.1,trim=18cm 3cm 22cm 7cm,clip]{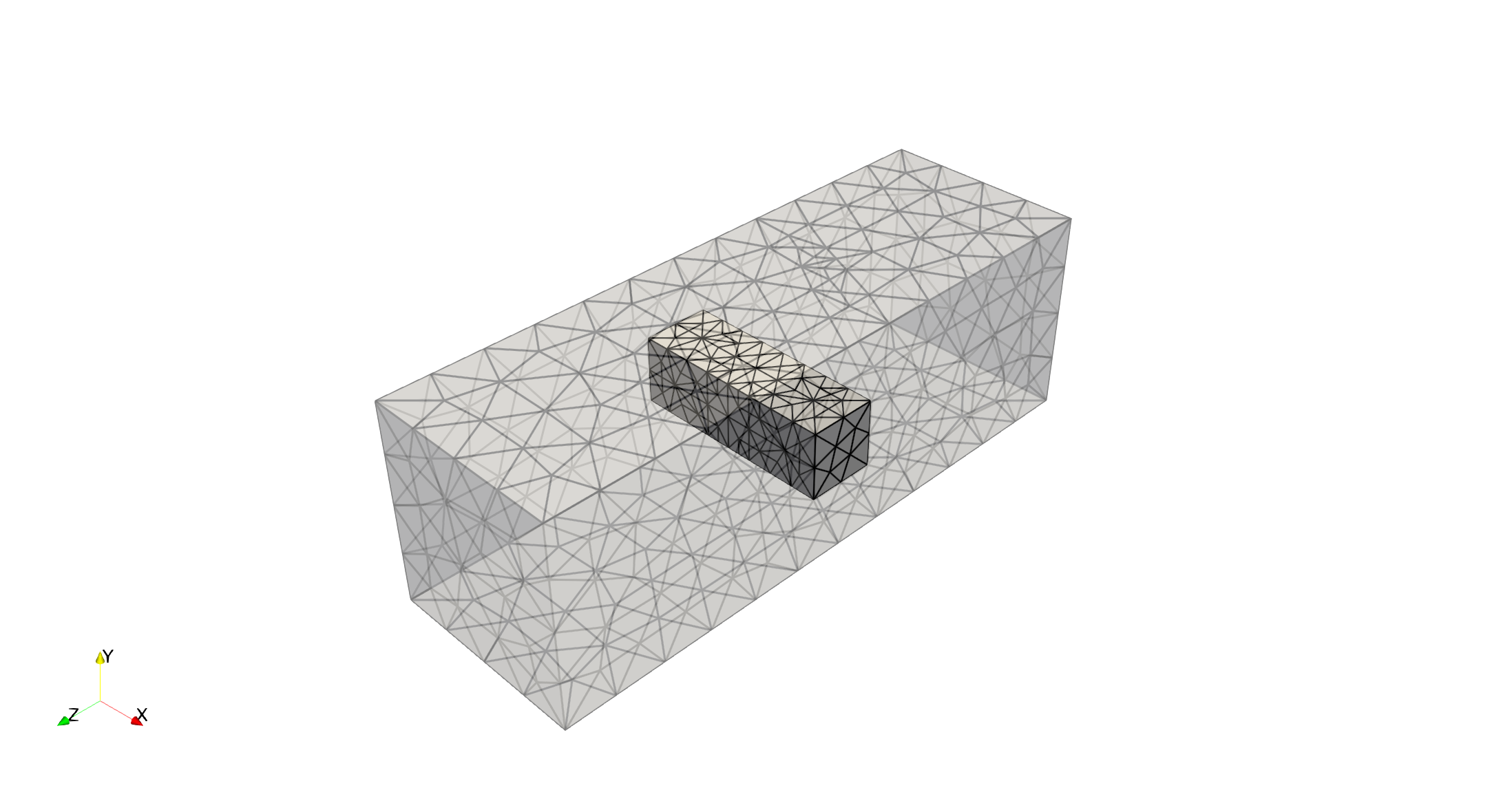}
	\end{minipage}
	\begin{minipage}{0.32\linewidth}\centering
		{\footnotesize $\texttt{dof}=534344$}\\
		\includegraphics[scale=0.1,trim=18cm 3cm 22cm 7cm,clip]{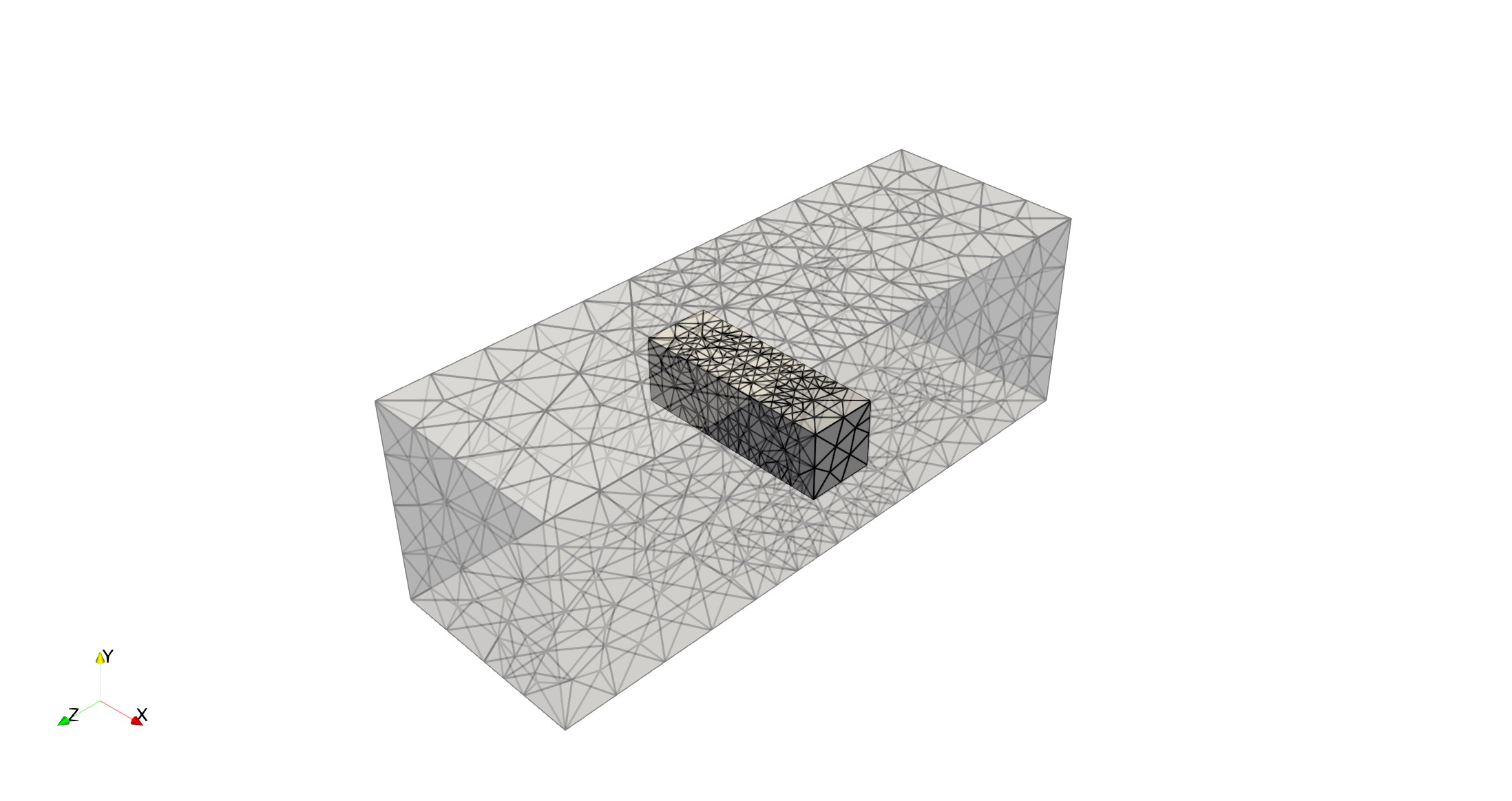}
	\end{minipage}
	\begin{minipage}{0.32\linewidth}\centering
		{\footnotesize $\texttt{dof}=1200982$}\\
		\includegraphics[scale=0.1,trim=18cm 3cm 22cm 7cm,clip]{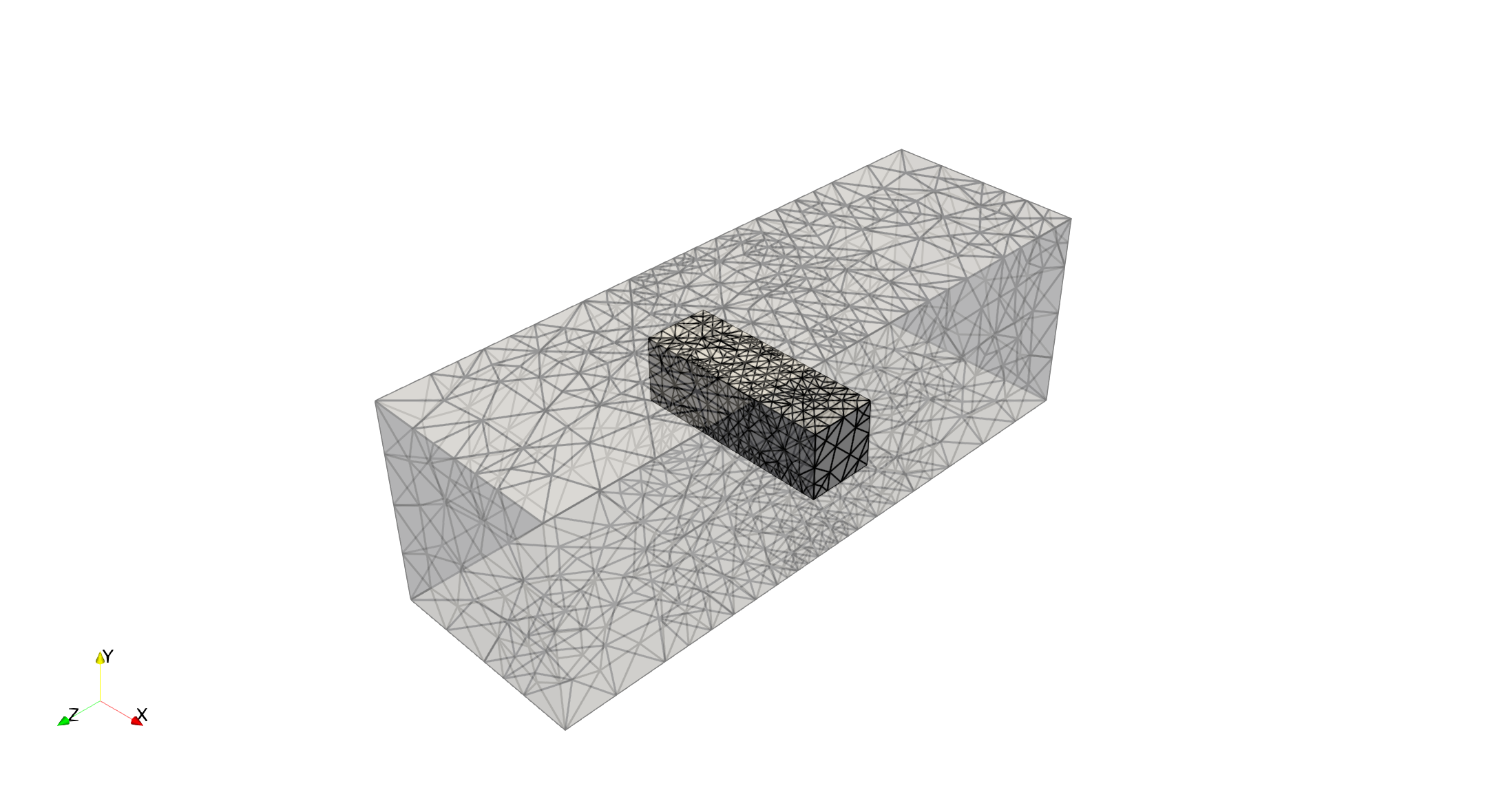}
	\end{minipage}
	\caption{Test \ref{subsec:3d-channel}. Intermediate adaptive meshes for $k=1,2,$ in the symmetric IPDG scheme ($\varepsilon=1$.)}
	\label{fig:3d-channel-adaptive-meshes}
\end{figure}

\begin{figure}[!hpbt]\centering
	\begin{minipage}{0.49\linewidth}\centering
		\includegraphics[scale=0.32,trim=0cm 0cm 2cm 2cm,clip]{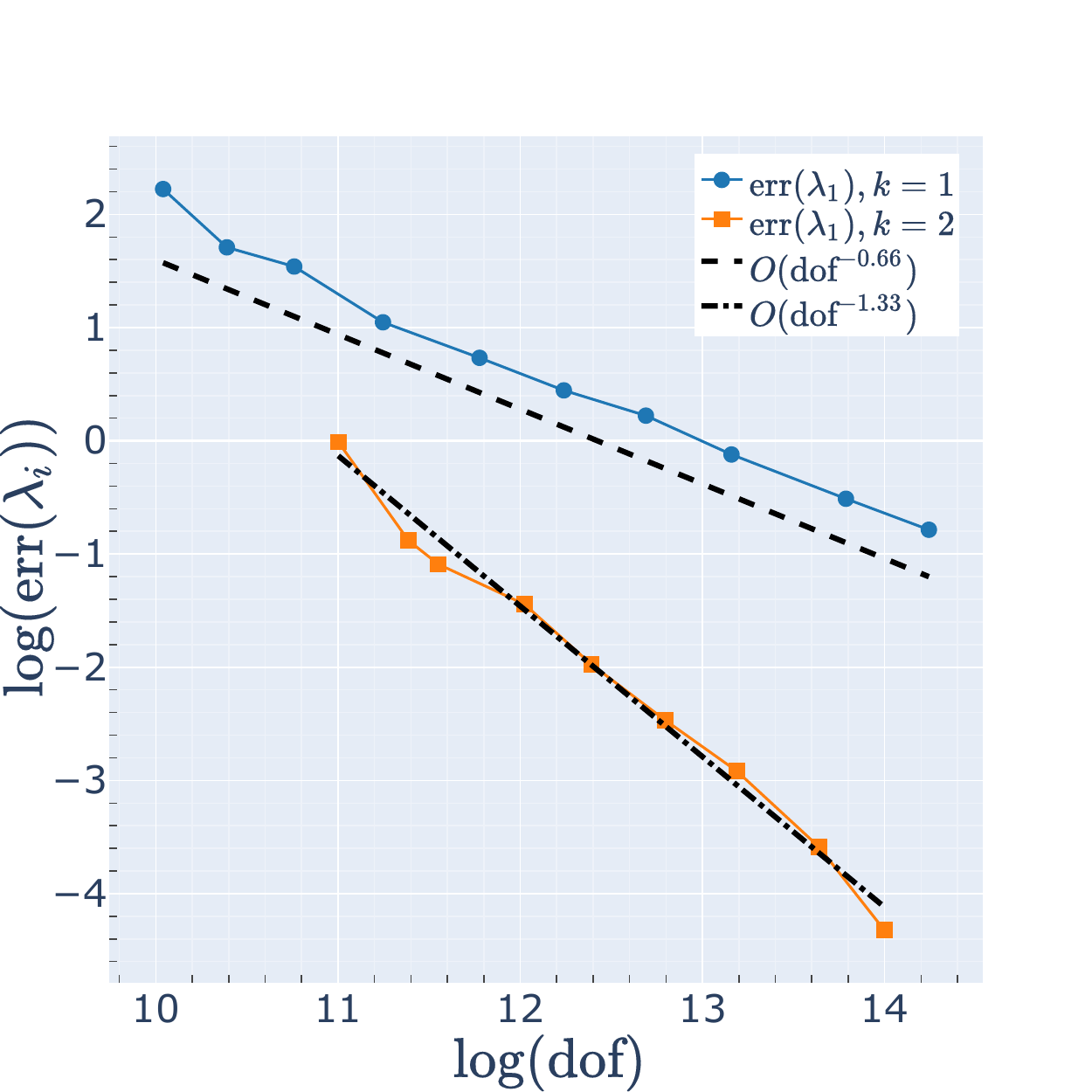}
	\end{minipage}
	\begin{minipage}{0.49\linewidth}\centering
		\includegraphics[scale=0.32,trim=0cm 0cm 2cm 2cm,clip]{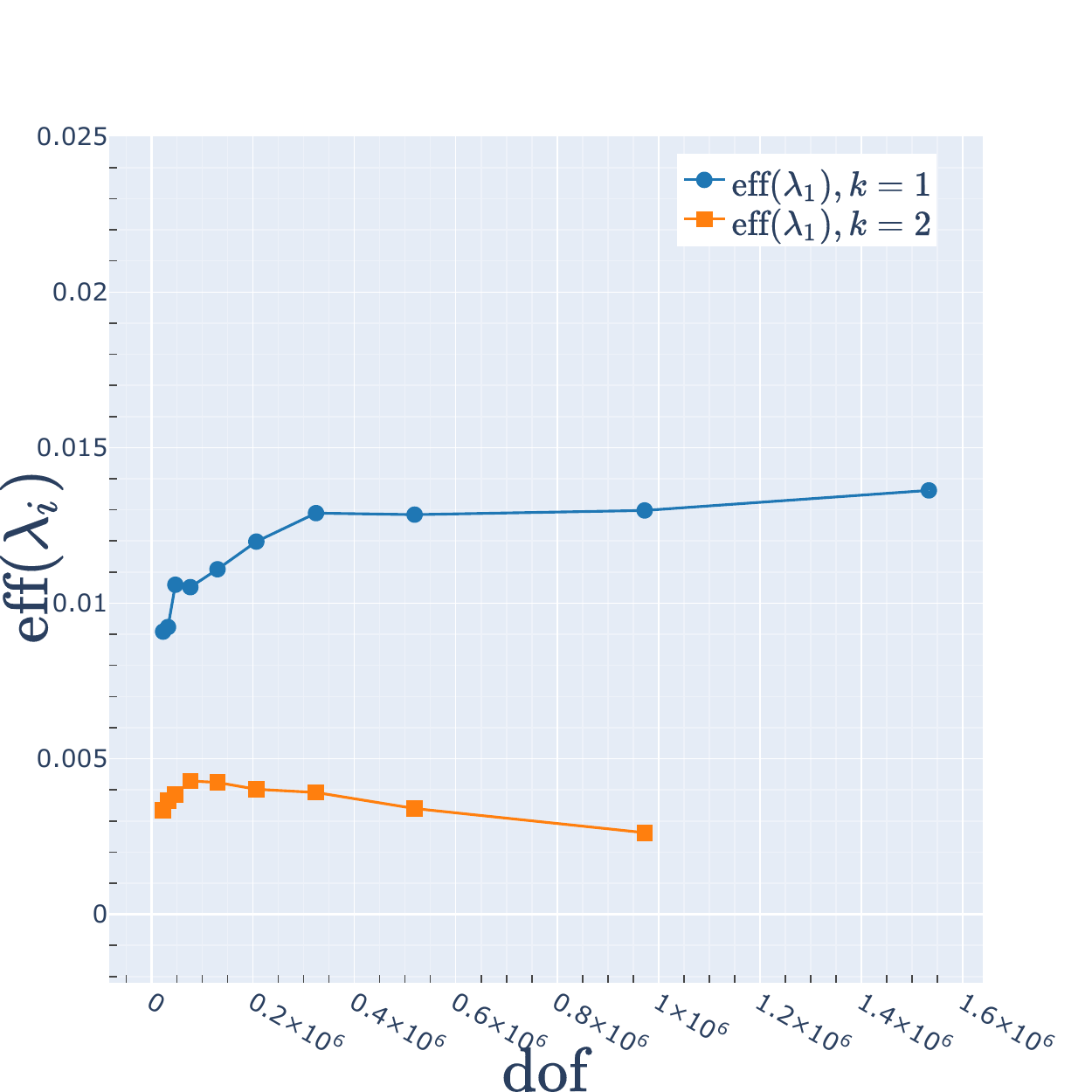}
	\end{minipage}
	\caption{Test \ref{subsec:3d-channel}. Error history for the adaptive refinements when computing the first eigenvalue (left) together with their corresponding effectivity indexes (right) in the symmetric case ($\varepsilon=1$).}
	\label{fig:error-eff-channel3d-symmetric}
\end{figure}

%
%

\bibliographystyle{siamplain}
\bibliography{LRV_brinkmannEV}
\end{document}